\theoremstyle{plain}
\newtheorem{theorem}{Theorem}[section]
\newtheorem{corollary}[theorem]{Corollary}
\newtheorem{lemma}[theorem]{Lemma}
\newtheorem{proposition}[theorem]{Proposition}
\theoremstyle{definition}
\newtheorem{remark}[theorem]{Remark}
\newtheorem{question}[theorem]{Question}
\newtheorem{case}{Case}
\newtheorem{conjecture}[theorem]{Conjecture}
\renewcommand{\leq}{\leqslant}
\renewcommand{\geq}{\geqslant}\usepackage{amssymb}
\newcommand{\vr}{\varepsilon}
\newcommand{\one}{\mathbf{1}}
\newcommand{\be}{\begin{equation}}
\newcommand{\ee}{\end{equation}}
\newcommand{\N}{\mathbb N}
\newcommand{\ave}{{\mathrm{Ave}}}
\newcommand{\ce}{\mathcal E}
\newcommand{\se}{{{\mathfrak{C}}}_{\mathcal{E}}}
\newcommand{\ball}{\mathbf{B}}
\newcommand{\A}{\mathcal{A}}
\newcommand{\cs}{\mathfrak{C}}
\newcommand{\dist}{\mathrm{dist}}
\newcommand{\lore}{\mathfrak{l}}
\newcommand{\injtens}{\check{\otimes}}
\newcommand{\projtens}{\hat{\otimes}}
\newcommand{\anytens}{\tilde{\otimes}}
\def \rank{{\mathrm{rank}} \, }
\def \ker{{\mathrm{ker}} \, }
\renewcommand{\span}{\mathrm{span}}
\def \vr{\varepsilon}
\def \ran{{\mathrm{ran}} \, }
\def \diag{{\mathrm{diag}} \, }
\def \eqalign#1{\null\,\vcenter{\openup\jot %%%   \m@th
   \ialign{\strut\hfil$\displaystyle{##}$&$
      \displaystyle{{}##}$\hfil \crcr#1\crcr}}\,}
\def\iK{\mathcal {K}}
\def\iSS{\mathcal {SS}}
\def\query#1{\setlength\marginparwidth{80pt}%
% \marginpar{\raggedright\fontsize{7.81}{9}\selectfont\itshape\hrule\smallskip
\marginpar{\raggedright\fontsize{10}{10}\selectfont\itshape
\hrule\smallskip
{\textcolor{red}{#1}}\par\smallskip\hrule}}
\begin{document}

\numberwithin{equation}{section}

\pagestyle{headings}

% \title[Operator ideals]{Operator ideals  in the non-commutative world}

\title[Subprojective Banach spaces]{Subprojective Banach spaces}
% \title{Ideals of operators on non-commutative function spaces}

\author[T. Oikhberg]{Timur Oikhberg}
\address{
Dept.~of Mathematics, University of Illinois at Urbana-Champaign, Urbana IL 61801, USA}
\email{oikhberg@illinois.edu}

\author[E. Spinu]{Eugeniu Spinu}
\address{Dept. of Mathematical and Statistical Sciences, University of Alberta
Edmonton, Alberta  T6G 2G1, CANADA}
\email{espinu@ualberta.ca}

\subjclass[2010]{Primary: 46B20, 46B25; Secondary: 46B28, 46B42}

\begin{abstract}
% We consider the subprojectivity of Banach spaces.
A Banach space $X$ is called subprojective if any of its infinite dimensional
subspaces $Y$ contains a further infinite dimensional subspace complemented in $X$.
This paper is devoted to systematic study of subprojectivity.
We examine the stability of subprojectivity of Banach spaces under various operations,
such us direct or twisted sums, tensor products, and forming spaces of operators.
Along the way, we obtain new classes of subprojective spaces.
\end{abstract}

\thanks{The authors acknowledge the generous support of Simons Foundation,
via its Travel Grant 210060. They would also like to thank the organizers of
Workshop in Linear Analysis at Texas A\&M, where part of this work was
carried out. Last but not least, they express their gratitude to
L. Bunce, T. Schlumprecht, B. Sari, and N.-Ch. Wong for many helpful suggestions.}

\maketitle

\section{Introduction and main results}\label{s:intro}

We examine various aspects of subprojectivity.
Throughout this note, all Banach spaces are assumed to be infinite dimensional,
and subspaces, infinite dimensional and closed, until specified otherwise.

%\begin{definition}\label{d:SP}
%A Banach space $X$ is called \emph{subprojective} if every subspace $Y \subset X$
%contains a further subspace $Z \subset Y$, complemented in $X$.
%\end{definition}

A Banach space $X$ is called \emph{subprojective} if every subspace $Y \subset X$
contains a further subspace $Z \subset Y$, complemented in $X$.
This notion was introduced in \cite{Whi}, in order to study the (pre)adjoints
of strictly singular operators. Recall that an operator $T \in B(X,Y)$ is
\emph{strictly singular} ($T \in \iSS(X,Y)$) if $T$ is not an isomorphism
on any subspace of $X$. In particular, it was shown that, if $Y$ is subprojective,
and, for $T \in B(X,Y)$, $T^* \in \iSS(Y^*,X^*)$, then $T \in \iSS(X,Y)$.

Later, connections between subprojectivity and perturbation classes were discovered.
More specifically, denote by $\Phi_+(X,Y)$ the set of \emph{upper semi-Fredholm operators} --
that is, operators with closed range, and finite dimensional kernel. If
$\Phi_+(X,Y) \neq \emptyset$, we define the \emph{perturbation class}
$$
P \Phi_+ (X, Y ) = \{ S \in B(X,Y) : T+S \in \Phi_+(X,Y)
{\textrm{   whenever  }} T \in \Phi_+(X,Y) \}. 
$$
It is known that $\iSS(X, Y ) \subset P \Phi_+ (X, Y )$. In general, this
inclusion is proper. However, we get $\iSS(X, Y ) = P \Phi_+ (X, Y )$ if $Y$
is subprojective  (see \cite[Theorem 7.51]{Ai} for this, and for similar
connections to inessential operators).

% More specifically \cite[Theorem 7.51]{Ai}:
% if $X$ and $Y$ are Banach spaces, and $Y$ is subprojective, then
% $\iSS(X, Y ) = P \Phi_+ (X, Y ) = \iIN(X, Y )$.
% In general however, those classes are different.
% Here, $\iIN$ denotes the ideal of \emph{inessential} operators:
% $T \in \iIN(X,Y)$ iff $I_X - ST$ is Fredholm for any $S \in B(Y,X)$
% (see \cite[Chapter 7]{Ai} for this definition, and for several equivalent
% characterizations of inessential operators).
% $\Phi_+(X,Y)$ is the set of
% \emph{upper semi-Fredholm} operators from $X$ to $Y$, that is, the
% operators with finite dimensional kernel, and closed range.
% The \emph{perturbation class} $P \Phi_+(X,Y)$ consists of those
% $T \in B(X,Y)$ for which $T + S \in \Phi_+(X,Y)$ whenever $S \in \Phi_+(X,Y)$.
% The reader is referred to \cite[Chapter 7]{Ai} for more information on
% these families of operators.

Several classes of subprojective spaces are described in \cite{GMASB}.
% Being subprojective passes to subspaces.
Common examples of non-subprojective
space are $L_1(0,1)$ (since all  Hilbertian subspaces of $L_1$ are not complemented), 
$C(\Delta)$, where $\Delta$ is the Cantor set, or $\ell_\infty$
(for the same reason). The disc algebra is not subprojective, since by e.g.
\cite[III.E.3]{Woj} it contains a copy of $C(\Delta)$.
By \cite{Whi}, $L_p(0,1)$ is subprojective if and only if $2 \leq p < \infty$.
Consequently, the Hardy space $H_p$ on the disc is subprojective for
exactly the same values of $p$. Indeed, $H_\infty$ contains the disc algebra.
For $1 < p < \infty$, $H_p$ is isomorphic to $L_p$. The space $H_1$ contains
isomorphic copies of $L_p$ for $1 < p \leq 2$ \cite[Section 3]{WojH1}.
On the other hand, VMO is subprojective (\cite{MuS}, see also \cite{RanVMO}
for non-commutative generalizations).

% Our paper is structured as follows.

We start our paper by collecting various facts needed to study subprojectivity
(Section \ref{s:stability}). Along the way, we prove that subprojectivity is stable
under suitable direct sums (Proposition \ref{p:dir_sum}). However, subprojectivity
is not a $3$-space property (Proposition \ref{p:3 space}). Consequently, subprojectivity
is not stable under the gap metric (Proposition \ref{p:SP gap}). Considering the place
of subprojective spaces in Gowers dichotomy, we observe that each subprojective
space has a subspace with an unconditional basis. However, we exhibit a space
with an unconditional basis, but with no subprojective subspaces (Proposition \ref{p:GM}).

In Section \ref{s:tens_prod}, we investigate the
subprojectivity of tensor products, and of spaces of operators.
A general result on tensor products (Theorem \ref{t:ce est})
yields the subprojectivity on $\ell_p \injtens \ell_q$ and $\ell_p \projtens \ell_q$
for $1 \leq p, q < \infty$ (Corollary \ref{c:l_p l_q}), as well as of $\iK(L_p, L_q)$ for
$1 < p \leq 2 \leq q < \infty$ (Corollary \ref{c:p est}).
We also prove that the space $B(X)$ is never subprojective (Theorem \ref{t:B(X)}),
and give an example of non-subprojective tensor product $\ell_2 \otimes_\alpha \ell_2$
(Proposition \ref{p:tens not SP}).

Throughout Section \ref{s:cont}, we work with $C(K)$ spaces, with $K$ compact metrizable.
We begin by observing that $C(K)$ is subprojective if and only if $K$ is scattered. 
Then we prove that $C(K,X)$ is subprojective if and only if both $C(K)$ and $X$ are
(Theorem \ref{t:C(K,X)}). Turning to spaces of operators, we show that,
for $K$ scattered, $\Pi_{qp}(C(K),\ell_q) $ is subprojective (Proposition \ref{p:P pq}).
Then we study continuous fields on a scattered base space, proving that any scattered
separable CCR $C^*$-algebra is subprojective (Corollary \ref{c:field}).

Section \ref{s:schatten} shows that, in many cases, subprojectivity passes from
a sequence space to the associated Schatten spaces (Proposition \ref{p:sch_subpr}).
% exhibits a large class of subprojective Schatten spaces.

Proceeding to Banach lattices, 
in Section \ref{s:DH} we prove that  $p$-disjointly homogeneous $p$-convex
lattices ($2 \leq p < \infty$) are subprojective (Proposition \ref{p:p-DH l2 lp}).
%This allows us to establish subprojectivity for a wide class of Lorentz and Orlicz spaces
%(Proposition {\textcolor{red}{FILL IN}}).
In Section \ref{s:lattice} (Proposition \ref{p:X(l_p)}), we show that
the lattice $\widetilde{X(\ell_p)}$ is subprojective whenever $X$ is.
Consequently (Proposition \ref{p:Rad X}), if $X$ is a subprojective space
with an unconditional basis and non-trivial cotype, then $Rad(X)$ is subprojective.

% Finally, in Section \ref{s:sharper} we consider uniform and strong subprojectivity,
% and reprove some of our results in this new setting. We also show that uniform
% subprojectivity is strictly stronger than the ``regular'' subprojectivity
% (Remark \ref{r:no USP}).

% Throughout the paper, we state several questions and conjectures involving subprojectivity.

Throughout the paper, we use the standard Banach space results and notation.
By $B(X,Y)$ and $\iK(X,Y)$ we denote the sets of linear bounded and compact operators,
respectively, acting between Banach spaces $X$ and $Y$. $\ball(X)$ refers to the
closed unit ball of $X$. For $p \in [1,\infty]$, we denote by $p^\prime$ the ``adjoint''
of $p$ (that is, $1/p + 1/p^\prime = 1$).

% \section{Subprojectivity of direct sums and twisted sums}\label{s:stability}

\section{General facts about subprojectivity}\label{s:stability}

We begin this section by showing that subprojectivity passes to direct sums.

\begin{proposition}\label{p:dir_sum}
(a) Suppose $X$ and $Y$ are Banach spaces. Then the following are equivalent:
\begin{enumerate}%[(a)]
\item
Both $X$ and $Y$ are subprojective.
\item
$X \oplus Y$ is subprojective.
\end{enumerate}

(b) Suppose $X_1, X_2, \ldots$ are Banach spaces, and $\ce$ is a space
with a $1$-unconditional basis. Then the following are equivalent:
\begin{enumerate}%[(a)]
\item
The spaces $\ce, X_1, X_2, \ldots$ are subprojective.
\item
$(\sum_n X_n)_\ce$ is subprojective.
\end{enumerate}
\end{proposition}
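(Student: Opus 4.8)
The plan is to prove part (b) in full (part (a) is the special case $\ce = \ell_\infty^2$, or can be done by the same argument with two summands, so I would either derive it from (b) or run the same proof with an obvious truncation). The implication (2)$\Rightarrow$(1) is the easy direction: each $X_n$ embeds complementably in $E = (\sum_n X_n)_\ce$ (via the canonical inclusion onto the $n$-th coordinate, with the obvious norm-one projection coming from $1$-unconditionality of the basis of $\ce$), and likewise $\ce$ embeds complementably in $E$ as the closed span of vectors $e_n \otimes x_n$ with $\|x_n\| = 1$; since a complemented subspace of a subprojective space is subprojective (this is routine, and presumably recorded among the ``general facts'' the section is collecting), all the $X_n$ and $\ce$ are subprojective.

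For (1)$\Rightarrow$(2) I would use a gliding-hump/small-perturbation argument. Let $Y \subset E$ be an arbitrary infinite-dimensional subspace. Write $P_F$ for the canonical norm-one projection onto $(\sum_{n \in F} X_n)_\ce$ for a finite set $F \subset \N$, and $Q_N = P_{\{1,\dots,N\}}$. There are two cases. \emph{Case 1:} there is an $N$ such that $Q_N|_Y$ is not strictly singular; then $Y$ has a subspace $Y'$ on which $Q_N$ is an isomorphism, so $Y'$ is (up to isomorphism) a subspace of the finite direct sum $X_1 \oplus \cdots \oplus X_N$. That finite sum is subprojective — here one needs that subprojectivity is stable under finite direct sums, which is exactly part (a), so in the writeup I would prove (a) first, either directly for two summands (given $Y \subset X \oplus Y$, either the projection onto $X$ restricted to $Y$ is strictly singular, in which case $Y$ has a subspace close to, hence isomorphic to a subspace of, $Y$, and we pull back a complemented subspace, or it is an isomorphism on a subspace and we argue on the $X$ side) and then induct. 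So in Case 1 we find $Z \subset Y' \subset Y$ complemented in $X_1 \oplus\cdots\oplus X_N$, hence complemented in $E$ since $Q_N$ is a projection of $E$ onto that finite sum and complementation is transitive.

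\emph{Case 2:} $Q_N|_Y$ is strictly singular for every $N$. Then I build, by a standard gliding hump, a normalized basic sequence $(y_k)$ in $Y$ and a sequence of finite ``intervals'' $F_1 < F_2 < \cdots$ in $\N$ such that $\|y_k - P_{F_k} y_k\|$ is as small as we like: starting with any normalized $y_1$ and a large enough block $F_1$ with $\|y_1 - P_{F_1}y_1\|$ small, strict singularity of $Q_{\max F_1}$ on $Y$ lets us find a normalized $y_2 \in Y$ with $\|Q_{\max F_1} y_2\|$ tiny, then pick $F_2 > F_1$ capturing almost all of $y_2$, and iterate, with $\sum_k \|y_k - P_{F_k} y_k\|$ summable and tiny. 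The block-diagonal vectors $u_k = P_{F_k} y_k$ (normalized) then span a subspace isometrically realized inside the ``disjointly supported'' copy of $\ce$ living in $E$; more precisely $\overline{\mathrm{span}}(u_k)$ is, up to a small perturbation, $1$-complemented in $E$ via $\sum_k P_{F_k}$ composed with the coordinate projections, and the natural map $\overline{\mathrm{span}}(u_k) \to \ce$ sending $u_k$ to a normalized multiple of $e_{n_k}$ (choosing one index $n_k \in F_k$ — actually one should pass to a further block within $F_k$ supported on a single coordinate $X_{n_k}$, using that on each fixed $X_n$ things are finite-dimensional is false, so instead keep the whole block but compare its $\ce$-norm behaviour) ... the cleanest route: the sequence $(u_k)$ is equivalent to a block basis of the basis of $\ce$, so $\overline{\mathrm{span}}(u_k)$ is isomorphic to a subspace of $\ce$ and, being block, is complemented in $E$. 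Since $\ce$ is subprojective, this subspace contains a further subspace $W$ complemented in $\ce$, hence (again transitivity, since $\overline{\mathrm{span}}(u_k)$ sits complementably in $E$ and carries a complemented copy behaving like $\ce$) complemented in $E$. Finally, by the principle of small perturbations, if the humps were chosen small enough then $Y$ contains a subspace $Z$ that is $(1+\vr)$-isomorphic to $W$ via a map close to the identity, and complementability transfers to $Z$ inside $E$.

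The main obstacle, and the step I would spend the most care on, is Case 2 — specifically making rigorous the identification of $\overline{\mathrm{span}}(u_k)$ with a block subspace of $\ce$ and ensuring the resulting complemented subspace of $\ce$ pulls back to a complemented subspace of $E$, rather than just a complemented subspace of $\overline{\mathrm{span}}(u_k)$. The subtlety is that a block of $(u_k)$ is complemented in $E$ automatically (project onto the supporting coordinates, then within $\ce$), but one must check the two projections compose correctly and that passing to the sub-block $W$ keeps this; this is where $1$-unconditionality of the basis of $\ce$ is used decisively. The small-perturbation bookkeeping (choosing the humps summably small so that all the ``up to $\vr$'' isomorphisms and ``up to $\vr$'' projections really do produce an honest complemented subspace of $Y$) is routine but must be set up before the humps are extracted.
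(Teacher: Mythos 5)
Your overall architecture (the dichotomy ``some $Q_N|_Y$ fixes a subspace'' versus ``all $Q_N|_Y$ strictly singular'', followed by a gliding hump and the small perturbation principle) is the same as the paper's, and your part (a), Case 1, and the direction $(2)\Rightarrow(1)$ are fine. The gap is at the decisive point of Case 2, and you half-noticed it yourself before papering over it. The claim that $\overline{\span}[u_k]$, ``being block, is complemented in $E$'' is false: a disjointly supported sequence in a space with a $1$-unconditional basis need not span a complemented subspace (if it did, then after a small perturbation every subspace of every space with an unconditional basis would contain a complemented subspace, i.e., every such space would be subprojective --- contradicted, e.g., by the universal unconditional basis space, which the paper uses elsewhere as a non-subprojective symmetric sequence space). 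Likewise your candidate projection ``$\sum_k P_{F_k}$ composed with coordinate projections'' is an operator assembled from norm-one maps acting on disjoint \emph{blocks} of coordinates, and for a general $1$-unconditional $\ce$ such a block-diagonal assembly need not be bounded: $1$-unconditionality gives you coordinate-wise domination of norms, not block-wise domination.

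What is actually needed --- and what the paper does --- is to complement not $\overline{\span}[u_k]$ but the larger space $Z$ spanned by the \emph{individual coordinates} $z_j \in X_j$ of the vectors $u_k$. The map $Rx = (\langle z_j^*, x_j\rangle z_j)_j$, with $\|z_j^*\| = \|z_j\|^{-1}$ and $\langle z_j^*, z_j\rangle = 1$, is genuinely diagonal, so $\|Rx\|$ is the $\ce$-norm of a sequence dominated coordinate-by-coordinate by $(\|x_j\|)_j$, and $1$-unconditionality gives $\|R\| \leq 1$. Moreover $Z$ is isometric to a coordinate subspace of $\ce$ via $J\colon (\alpha_j z_j)_j \mapsto (\alpha_j \|z_j\|)_j$, and $\overline{\span}[u_k] \subset Z$. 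Now subprojectivity of $\ce$ applies to $J(\overline{\span}[u_k])$, producing $W$ complemented in $\ce$ by some $R_1$, and $J^{-1} R_1 J R$ is the desired projection of $E$ onto $J^{-1}(W)$. Your single-coordinate variant ($u_k \mapsto$ a multiple of $e_{n_k}$) does not work because $(u_k)$ need not be equivalent to $(e_{n_k})$; the correct fix is to keep all coordinates but project rank-one in each coordinate separately. With that replacement, the rest of your argument goes through.
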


In (b), we view $\ce$ as a space of sequences of scalars, equipped
with the norm $\| \cdot \|_\ce$. $(\sum_n X_n)_\ce$ refers to
the space of all sequences $(x_n)_{n \in \N} \in \prod_{n \in \N} X_n$,
endowed with the norm $\|(x_n)_{n \in \N}\| = \|(\|x_n\|_{X_n})\|_\ce$.
Due to the $1$-unconditionality (actually, $1$-suppression unconditionality
suffices), $(\sum_n X_n)_\ce$  is a Banach space.

We begin by making two simple observations, to be used several times throughout this paper.

\begin{proposition}\label{p:compare}
Consider Banach spaces $X$ and $X^\prime$, and $T \in B(X,X^\prime)$.
Suppose $Y$ is a subspace of $X$, $T|_Y$ is an isomorphism, and $T(Y)$
is complemented in $X^\prime$. Then $Y$ is complemented in $X$.
\end{proposition}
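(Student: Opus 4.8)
The plan is to produce an explicit bounded projection from $X$ onto $Y$ by pulling back a projection on $X'$ through $T$. Since $T|_Y \colon Y \to T(Y)$ is an isomorphism, it has a bounded inverse $S \colon T(Y) \to Y$. Let $Q \colon X' \to X'$ be a bounded projection with range $T(Y)$, which exists by hypothesis. The natural candidate for a projection of $X$ onto $Y$ is $P = S \circ Q \circ T$, where we interpret $Q$ as a bounded map from $X'$ onto $T(Y)$ so that the composition $S Q T$ makes sense as an operator from $X$ into $Y$.

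The key steps are then purely formal verifications. First, $P$ is bounded, being a composition of bounded operators: $\|P\| \le \|S\|\,\|Q\|\,\|T\|$. Second, $P$ maps $X$ into $Y$, since $S$ takes values in $Y$. Third, $P$ restricts to the identity on $Y$: for $y \in Y$ we have $Ty \in T(Y)$, so $Q(Ty) = Ty$ (as $Q$ fixes its range $T(Y)$), and then $S(Ty) = (T|_Y)^{-1}(Ty) = y$. Hence $P y = y$ for all $y \in Y$. Together with $\ran P \subseteq Y$, this shows $P^2 = P$ and $\ran P = Y$, so $P$ is a bounded projection of $X$ onto $Y$, i.e.\ $Y$ is complemented in $X$.

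There is no real obstacle here; the statement is essentially a bookkeeping exercise in composing the three given maps in the right order. The only point requiring a moment's care is the domain/codomain juggling: one must regard the projection $Q$ as a surjection onto $T(Y)$ (rather than merely as an idempotent on $X'$) so that $S$ can be applied, and one must invoke the open mapping theorem to know that $T|_Y$, being a bounded bijection onto the closed subspace $T(Y)$, has a bounded inverse $S$. (Here $T(Y)$ is closed precisely because $T|_Y$ is an isomorphism onto its image.) Once these identifications are in place, the computation $P|_Y = \mathrm{id}_Y$ is immediate.
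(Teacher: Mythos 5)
Your proof is correct and is exactly the paper's argument: the paper simply writes the projection as $T^{-1}QT$, which is your $S\circ Q\circ T$ with the same interpretation of $T^{-1}$ as the inverse of $T|_Y$. Your additional verifications (boundedness, idempotence, range) are just the details the paper leaves implicit.
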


\begin{proof}
If $Q$ is a projection from $X^\prime$ to $T(Y)$, then
$T^{-1} Q T$ is a projection from $X$ onto $Y$.
\end{proof}

This immediately yields:

\begin{corollary}\label{c:complem}
Suppose $X$ and $X^\prime$ are Banach spaces, and $X^\prime$ is subprojective.
Suppose, furthermore, that $Y$ is a subspace of $X$, and there exists
$T \in B(X,X^\prime)$ so that $T|_Y$ is an isomorphism. Then $Y$ contains
a subspace complemented in $X$.
\end{corollary}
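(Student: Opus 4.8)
The plan is to deduce this directly from Proposition \ref{p:compare} by using subprojectivity of $X'$ to manufacture the complemented piece. First I would observe that, since $T|_Y$ is an isomorphism onto its image, $T(Y)$ is a closed infinite dimensional subspace of $X'$. Applying the definition of subprojectivity to the subspace $T(Y) \subset X'$, I obtain an infinite dimensional subspace $W \subset T(Y)$ that is complemented in $X'$.

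Next I would pull $W$ back into $Y$: set $Z = (T|_Y)^{-1}(W) \subset Y$. Because $T|_Y$ is a linear homeomorphism onto its range, $Z$ is a closed infinite dimensional subspace of $Y$, and $T|_Z$ is an isomorphism with $T(Z) = W$ complemented in $X'$. Proposition \ref{p:compare}, applied with $Z$ in the role of $Y$, then yields that $Z$ is complemented in $X$. Since $Z \subset Y$, this is precisely the desired conclusion.

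Every step here is an immediate invocation of something already in hand, so there is no real obstacle; the only point needing (very minor) attention is verifying that $Z$ inherits closedness and infinite dimensionality from $W$, which is automatic once one notes that $T|_Y$ is a bicontinuous bijection onto its image.
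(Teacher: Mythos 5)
Your argument is correct and is exactly the deduction the paper intends: the authors state that the corollary "immediately yields" from Proposition \ref{p:compare}, and the implicit reasoning is precisely your combination of subprojectivity of $X'$ applied to $T(Y)$ with the pullback $Z=(T|_Y)^{-1}(W)$. Nothing further is needed.
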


The following version of ``Principle of Small Perturbations'' is folklore,
and essentially contained in \cite{BP58}.
% a version of a well-known result from \cite{BP58}.
We include the proof for the sake of completeness.

\begin{proposition}\label{p:seq}
Suppose $(x_k)$ is a seminormalized basic sequence in a
Banach space $X$, and $(y_k)$ is a sequence so that
$\lim_k \|x_k - y_k\| = 0$. Suppose, furthermore, that
every subspace of $\span[y_k : k \in \N]$ contains a subspace
complemented in $X$. Then $\span[x_k : k \in \N]$ contains a subspace
complemented in $X$.
\end{proposition}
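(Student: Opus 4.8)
The plan is to use the classical "gliding hump" / small perturbation argument to show that, after passing to a subsequence, $\span[x_k]$ and $\span[y_k]$ are isomorphic via a map that is close to the formal identity, and moreover that this isomorphism can be chosen so that the image of any subspace of $\span[x_k]$ sits close to a subspace of $\span[y_k]$. Combined with Proposition~\ref{p:compare}, this will transfer complementation from $\span[y_k]$ back to $X$.

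First I would recall the quantitative form of the principle of small perturbations: since $(x_k)$ is a seminormalized basic sequence, say with basis constant $K$ and $\inf_k \|x_k\| = c > 0$, one can pass to a subsequence (not relabeled) so that $\sum_k \|x_k - y_k\| \cdot (\text{suitable constant}) < 1$; then the operator $T \colon \span[x_k] \to \span[y_k]$ determined by $T x_k = y_k$ is a well-defined isomorphism onto its range, with $\|T - I\|$ small, where $I$ is the formal identity into $X$. In particular $(y_k)$ is also a basic sequence equivalent to $(x_k)$, and $T^{-1}$ is bounded. This is exactly the content attributed to \cite{BP58}.

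Next, let $Z \subset \span[x_k]$ be an arbitrary subspace; I want to produce a subspace of $Z$ complemented in $X$. Consider $T(Z) \subset \span[y_k]$. By hypothesis, $T(Z)$ contains a subspace $W$ complemented in $X$. Set $Z_0 = T^{-1}(W) \subset Z$. Then $T|_{Z_0}$ is an isomorphism onto $W = T(Z_0)$, and $W$ is complemented in $X$; applying Proposition~\ref{p:compare} with $X' = X$ and the operator $T$ (extended arbitrarily, or simply viewing $T$ as a map from $\span[x_k]$), we conclude $Z_0$ is complemented in $X$. Since $Z$ was arbitrary, $\span[x_k]$ has the desired property, which in particular gives the statement of the proposition (applied with $Z = \span[x_k]$ itself). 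Strictly, Proposition~\ref{p:compare} is stated for $T$ defined on all of $X$, so I would either first note it holds verbatim for $T$ defined on a subspace containing $Y$, or extend $T$ to $X$ using a bounded projection onto $\span[x_k]$ is not available --- instead simply re-derive the one-line fact: if $Q$ projects $X$ onto $W$ and $T|_{Z_0}$ is an isomorphism onto $W$, then $(T|_{Z_0})^{-1} Q$ is a bounded map $X \to Z_0$ restricting to the identity on $Z_0$, hence a projection.

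The only genuine obstacle is the bookkeeping in the perturbation step: one must choose the subsequence so that the perturbation series converges against the basis projections of $(x_k)$, and verify that the resulting $T$ and $T^{-1}$ are bounded on the closed span, not merely on finitely supported vectors. This is entirely standard (it is the Bessaga--Pełczyński argument), so I would state it crisply and refer to \cite{BP58}, spending the bulk of the written proof on the clean transfer argument via Proposition~\ref{p:compare}.
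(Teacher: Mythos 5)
Your overall strategy is the paper's strategy (small perturbation plus transfer of complementation), but the transfer step as you wrote it has a genuine gap. The problem is that your isomorphism $T : \span[x_k] \to \span[y_k]$, $Tx_k = y_k$, is only defined on $\span[x_k]$, and neither of your two proposed repairs works. Proposition~\ref{p:compare} does \emph{not} hold verbatim for an operator defined only on a subspace containing $Z_0$: its proof produces the projection $T^{-1}QT$, and the initial application of $T$ must accept an arbitrary element of $X$; if $T$ lives only on $\span[x_k]$ (which need not be complemented in $X$), you only get a projection from $\span[x_k]$ onto $Z_0$, not from $X$. Your ``one-line fact'' is also false: for $z \in Z_0$ one has $(T|_{Z_0})^{-1}Qz = z$ only if $Qz = Tz$, and there is no reason for that --- $Q$ projects $z$ onto $W$, it does not move $z$ to $Tz$. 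So $(T|_{Z_0})^{-1}Q$ need not restrict to the identity on $Z_0$, and need not be idempotent.

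The missing idea, which is exactly what the paper does, is to realize the perturbation as an automorphism of all of $X$. Normalize $(x_k)$, let $x_k^*$ be Hahn--Banach extensions of the biorthogonal functionals with $K = \sup_k\|x_k^*\| < \infty$, and pass to a subsequence so that $\sum_k \|x_k - y_k\| < 1/(2K)$. Then $Ux = \sum_k x_k^*(x)(y_k - x_k)$ defines an operator on $X$ with $\|U\| < 1/2$, so $V = I_X + U$ is invertible on $X$ and $Vx_k = y_k$. Now if $Q$ is a projection of $X$ onto $W \subset \span[y_k]$ (which exists by hypothesis), then $V^{-1}QV$ is a projection of $X$ onto $V^{-1}(W) \subset \span[x_k]$. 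This is the standard Bessaga--Pe{\l}czy\'nski argument from \cite{BP58} that you allude to, but the point you cannot skip is precisely the global invertibility of $V$ on $X$; citing the equivalence of the basic sequences $(x_k)$ and $(y_k)$ alone does not suffice.
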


\begin{proof}
Replacing $x_k$ by $x_k/\|x_k\|$, we can assume that $(x_k)$ normalized.
% Replacing $x_k$ and $y_k$ by $x_k/\|x_k\|$ and $y_k/\|y_k\|$,
% respectively, we can assume that our basic sequences are normalized.
% the basic sequence $(x_k)$ is normalized.
Denote the biorthogonal functionals by $x_k^*$, and
set $K = \sup_k \|x_k^*\|$. Passing to a subsequence, we can assume
that $\sum_k \|x_k - y_k\| < 1/(2K)$. Define the operator
$U \in B(X)$ by setting $U x = \sum_k x_k^* (x) (y_k - x_k)$.
Clearly $\|U\| < 1/2$, and therefore, $V = I_X + U$ is invertible.
Furthermore, $V x_k = y_k$. If $Q$ is a projection from
$X$ onto a subspace $W \subset \span[y_k : k \in \N]$, then 
$P = V^{-1} Q V$ is a projection from
$X$ onto a subspace $Z \subset \span[x_k : k \in \N]$.
\end{proof}

\begin{remark}\label{r:kernels}
Note that, in the proof above, the kernels and the ranges of the projections
$Q$ and $P$ are isomorphic, via the action of $V$.
\end{remark}

% We also need a folklore version of the ``principle of small perturbations''
% (it follows directly from \cite{BP58}).

% \begin{theorem}\label{t:small}
% For any seminormalized basic sequence $(x_n)$
% in a Banach space $X$ there exists $\vr > 0$
% so that, if $(y_k)$ is another sequence in $X$, $n_1 < n_2 < \ldots$, and
% $\sum_k \|x_{n_k} - y_k\| < \vr$, then $(y_k)$ is a basic sequence,
% equivalent to $(x_{n_k})$. Furthermore, if $P$ is a projection from $X$ onto
% $\span[y_{k} : k \in \N]$, satisfying $\sum_k \|x_{n_k} - y_k\| < \vr/\|P\|$,
% then $\span[x_{n_k} : k \in \N]$ is complemented in $X$.
% \end{theorem}

\begin{proof}[Proof of Proposition \ref{p:dir_sum}]
It is easy to see that subprojectivity is inherited by subspaces.
Thus, in both (a) and (b), only the implication $(2) \Rightarrow (1)$
needs to be established.

(a) 
Throughout the proof, $P_X$ and $P_Y$ stand for the coordinate projections from
$X \oplus Y$ onto $X$ and $Y$, respectively.
% All subspaces are assumed to be
% infinite dimensional, unless stated otherwise.
We have to show that any subspace $E$ of $X \oplus Y$ contains a further
subspace $G$, complemented in $X \oplus Y$.

Show first that $E$ contains a subspace $F$ so that either $P_X|_F$
or $P_Y|_F$ is an isomorphism. Indeed, suppose $P_X|_F$ is not an
isomorphism, for any such $F$. Then $P_X|_E$ is strictly singular,
hence there exists a subspace $F \subset E$, so that $P_X|_F$ has
norm less than $1/2$. But $P_X + P_Y = I_{X \oplus Y}$, hence,
by the triangle inequality, $\|P_Y f\| \geq \|f\| - \|P_X f\| \geq
\|f\|/2$ for any $f \in F$. Consequently, $P_Y|_F$ is an isomorphism.

Thus, by passing to a subspace, and relabeling if necessary, we can assume
that $E$ contains a subspace $F$, so that $P_X|_F$ is an isomorphism.
By Corollary \ref{c:complem}, $F$ contains a subspace $G$, complemented in $X$.

Set $F^\prime = P_X(F)$, and
let $V$ be the inverse of $P_X : F \to F^\prime$. By the subprojectivity
of $X$, $F^\prime$ contains a subspace $G^\prime$, complemented in $X$ via
a projection $Q$.
% Let $\tilde{Q} = Q \oplus 0$ be the corresponding projection on $X \oplus Y$.
Then $P = V Q P_X$ gives a projection onto $G = V(G^\prime) \subset F$.

(b) Here, we denote by $P_n$ the coordinate projection from $X = (\sum_k X_k)_\ce$
onto $X_n$. Furthermore, we set $Q_n = \sum_{k=1}^n P_k$, and
$Q_n^\perp = \one - Q_n$. We have to show that any subspace $Y \subset X$
contains a subspace $Y_0$, complemented in $X$. To this end, consider
two cases.

(i) For some $n$, and some subspace $Z \subset Y$, $Q_n|_Z$ is an isomorphism.
By part (a), $X_1 \oplus \ldots \oplus X_n = Q_n(X)$ is subprojective.
Apply Corollary \ref{c:complem} to obtain $Y_0$. % the desired $Y_0$.
% hence
% there exists a projection $R_0$ from $Q_n(X)$ onto a subspace $Y_1 \subset Q_n(Z)$.
% Then $Y_0 = Q_n^{-1}(Y_1)$ is complemented via the projection
% $R = Q_n^{-1} R Q_n$.

(ii) For every $n$, $Q_n|_Y$ is not an isomorphism -- that is, for every $n \in \N$,
and every $\vr > 0$, there exists a norm one $y \in Y$ so that
$\|Q_n y\| < \vr$. Therefore, for every sequence of positive numbers
$(\vr_i)$, we can find $0 = N_0 < N_1 < N_2 < \ldots$, and
a sequence of norm one vectors $y_i \in Y$,
% $y_i = (y_{ij})_{j \in \N} \in Y$,
so that, for every $i$, $\|Q_{N_i} y_i\|, \|Q^\perp_{N_{i+1}} y_i\| < \vr_i$.
By a small perturbation principle, we can assume that $Y$ contains norm one
vectors $(y_i^\prime)$ so that $Q_{N_i} y_i^\prime = Q^\perp_{N_{i+1}} y_i^\prime = 0$
for every $i$.
% We shall show that $Y_0 = \span[y_i^\prime : i \in \N]$ is complemented in $X$.
% To this end,
Write $y_i^\prime = (z_j)_{j=N_i+1}^{N_{i+1}}$, with $z_j \in X_j$.
Then $Z = \span[(0, \ldots, 0, z_j, 0, \ldots) : j \in \N]$
($z_j$ is in $j$-th position) is complemented in $X$.
Indeed, if $z_j \neq 0$, find $z_j^* \in X_j^*$ so that $\|z_j^*\| = \|z_j\|^{-1}$,
and $\langle z_j^*, z_j \rangle = 1$. If $z_j = 0$, set
$z_j^* = 0$. For $x = (x_j)_{j \in \N} \in X$, define
$R x = (\langle z_j^*, x_j \rangle z_j)_{j \in \N}$.
It is easy to see that $R$ is a projection onto $Z$,
% $Y_0 = \span[y_i^\prime : i \in \N]$,
and $\|R\|$ does not exceed the unconditionality constant of $\ce$.

Now note that
$J : Z \to \ce : (\alpha_1 z_1, \alpha_2 z_2, \ldots) \mapsto
 (\alpha_1 \|z_1\|, \alpha_2 \|z_2\|, \ldots)$
% $J : Z \to \ce : (0, \ldots, 0, z_j, 0, \ldots)
% \mapsto (0, \ldots, 0, \|z_j\|, 0, \ldots)$
% ($z_j$ and $\|z_j\|$ are in the $j$-th position)
is an isometry. Let
$Y^\prime = \span[y_i^\prime : i \in \N]$, and $Y_\ce = J(Y^\prime)$. By the
subprojectivity of $\ce$, $Y_\ce$ contains a subspace $W$, which is complemented
in $\ce$ via a projection $R_1$. Then $J^{-1} R_1 J R$ is a projection from $X$
onto $Y_0 = J^{-1}(W) \subset X$.
\end{proof}

\begin{remark} From the last proposition it follows  the (strong) $p$-sum of
subprojective  Banach spaces is subprojective.
% The natural question to ask is what
% happens if we consider the infinite weak sum of subprojective spaces.
On the other hand, the infinite weak sum of subprojective spaces need not be subprojective.

Recall that if $X$ is a Banach space, then 
$$\ell_p^{weak}(X)=\{ x=(x_n)_{n=1}^{\infty} \in X\times X\times X \ldots:
 \sup_{x^* \in X^{*}}{(\sum{|x^* (x_n)|^p})^{\frac{1}{p}}}<\infty \}.$$

It is known that $\ell_p^{weak}(X)$ is isomorphic to $B(\ell_{p^\prime}, X)$
($\frac{1}{p}+\frac{1}{p^\prime}=1$), see \cite[Theorem~2.2]{DJT}. We show that,
for $X=\ell_r$ $(r \ge p^\prime)$, $B(\ell_{p^\prime}, X)$ contains a copy of $\ell_\infty$,
and therefore, is not subprojective. To this end, denote by $(e_i)$ and $(f_i)$
the canonical bases in $\ell_r$ and $\ell_{p^\prime}$ respectively.
For $\alpha = (\alpha_i) \in \ell_\infty$, define
$B(\ell_{p^\prime}, X) \ni U \alpha : e_i \mapsto \alpha_i f_i$.
Clearly, $U$ is an isomorphism.

Note that the situation is different for $r < p^\prime$. Then, by Pitt's Theorem,
$B(\ell_{p^\prime}, \ell_r) = \iK(\ell_{p^\prime}, \ell_r)$. In the next section
we prove that the latter space is subprojective.
\end{remark}

% We conclude this section by showing
Next we show that subprojectivity is not a $3$-space property.

\begin{proposition}\label{p:3 space}
For $1 < p < \infty$ there exists a non-subprojective Banach space $Z_p$,
containing a subspace $X_p$, so that $X_p$  and $Z_p/X_p$ are isomorphic to $\ell_p$.
\end{proposition}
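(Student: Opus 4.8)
The plan is to build $Z_p$ as a twisted sum of two copies of $\ell_p$ that fails to be subprojective, exploiting the fact that a non-trivial twisted sum of $\ell_p$ with $\ell_p$ need not inherit the subprojectivity of its pieces. The classical candidate is the Kalton--Peck space $Z_p$, which sits in an exact sequence $0 \to \ell_p \to Z_p \to \ell_p \to 0$ that does not split. I would first recall the construction of $Z_p$ via the Kalton--Peck quasi-linear map $\Omega_p(x) = x \log(|x|/\|x\|_p)$ (suitably interpreted coordinatewise), equip $\ell_p \oplus \ell_p$ with the quasi-norm $\|(x,y)\|_{Z_p} = \|x - \Omega_p(y)\|_p + \|y\|_p$, and note that for $1 < p < \infty$ this is equivalent to a norm, so $Z_p$ is a genuine Banach space. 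The copy of $\ell_p$ is $X_p = \{(x,0)\}$, and $Z_p / X_p \cong \ell_p$ via the quotient map $(x,y) \mapsto y$; both are visibly isometric to $\ell_p$, giving the ``$X_p$ and $Z_p/X_p$ isomorphic to $\ell_p$'' part for free.

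**Non-subprojectivity.**

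The substantive point is that $Z_p$ is not subprojective. The key structural fact (due to Kalton and Peck) is that $Z_p$ is \emph{not} isomorphic to $\ell_p$, and more sharply, $X_p$ is an uncomplemented copy of $\ell_p$ in $Z_p$; in fact every subspace of $Z_p$ isomorphic to $\ell_p$ that is ``spread out'' in a suitable sense is uncomplemented. To prove failure of subprojectivity I would argue that the subspace $X_p$ itself is a witness: every infinite-dimensional subspace of $X_p \cong \ell_p$ that is complemented in $X_p$ is of the form $\overline{\span}[e_i : i \in A]$ up to small perturbation (using that complemented subspaces of $\ell_p$ are isomorphic to $\ell_p$ and, after a perturbation via Proposition \ref{p:seq}, may be taken spanned by a subsequence of the unit vector basis), and then show that no such ``block subspace'' $\overline{\span}[(e_i,0):i\in A]$ is complemented in $Z_p$. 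The latter is a restriction-of-the-twisted-sum computation: the quasi-linear map $\Omega_p$ restricted to $\ell_p(A)$ is again the Kalton--Peck map on $\ell_p(A)$ and is still non-trivial, so the induced exact sequence $0 \to \ell_p(A) \to Z_p|_A \to \ell_p(A) \to 0$ does not split, which forces $\overline{\span}[(e_i,0):i\in A]$ to be uncomplemented in $Z_p|_A$, hence in $Z_p$.

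**The main obstacle.**

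The delicate step — and the one I expect to be the main obstacle — is the reduction showing that \emph{every} infinite-dimensional subspace $Y \subset X_p$ fails to contain a further subspace complemented in $Z_p$. It is not enough to know that $X_p$ is uncomplemented: one must control \emph{all} subspaces of $X_p$. The route I would take: (1) given $Y \subset X_p \cong \ell_p$, pass to a normalized basic sequence in $Y$ equivalent to the $\ell_p$-basis, then use the gliding-hump / small-perturbation argument (Proposition \ref{p:seq} and Remark \ref{r:kernels}) to replace it by a normalized block basis $(u_k)$ of $(e_i)$, so that $\overline{\span}[u_k] \subset X_p$ and complementation of a subspace of it in $Z_p$ is equivalent to complementation of the analogous block subspace; (2) observe that a block basis of $(e_i,0)$ in $Z_p$ is, by the homogeneity properties of $\Omega_p$, again a Kalton--Peck-type configuration — concretely, the restriction of the twisted sum to the block subspace is again a non-trivially twisted sum of $\ell_p$ with $\ell_p$; (3) invoke the Kalton--Peck theorem that such a twisted sum has no complemented copy of $\ell_p$ that maps onto a finite-codimensional (in particular infinite-dimensional) piece, so no further subspace of $\overline{\span}[u_k]$ is complemented in $Z_p$. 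Step (2)--(3) is where one genuinely needs the internal structure theory of the Kalton--Peck spaces (e.g. that $Z_p$ is minimal in a suitable sense, or the singularity of the quotient map), and assembling that cleanly — or alternatively citing it from \cite{KaltP} — is the heart of the proof. Everything else is bookkeeping with perturbations and with the identification $Z_p/X_p \cong \ell_p$.
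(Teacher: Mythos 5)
Your choice of space is exactly the paper's: both proofs take $Z_p$ to be the Kalton--Peck twisted sum from \cite{KP}, with $X_p = j_p(\ell_p)$ the natural copy of $\ell_p$, and your identifications of $X_p$ and $Z_p/X_p$ with $\ell_p$ are correct and routine. The gap is in the heart of the argument, namely the claim that \emph{no} infinite-dimensional subspace of $X_p$ is complemented in $Z_p$. Your mechanism for this is the non-splitting of the restricted exact sequence $0 \to \ell_p(A) \to Z_p(A) \to \ell_p(A) \to 0$. But non-splitting of that sequence only shows that the \emph{full} copy $\ell_p(A) \times \{0\}$ is uncomplemented in $Z_p(A)$; it gives no information about a proper infinite-dimensional block subspace $\overline{\span}[u_k] \subsetneq \ell_p(A) \times \{0\}$, which is precisely the object your reduction in step (1) leaves you with. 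The ``Kalton--Peck theorem'' you invoke in step (3) (``no complemented copy of $\ell_p$ that maps onto a finite-codimensional piece'') is not a precise statement of any result in \cite{KP}, and as phrased it is not something one can check or cite; so the essential step is asserted rather than proved.

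The paper closes this gap by citing \cite[Theorem 6.5]{KP}: every non-strictly-singular operator on $Z_p$ fixes a copy of $Z_p$. If $W \subset j_p(\ell_p)$ were infinite dimensional and complemented in $Z_p$ via $P$, then $P$ is not strictly singular, hence fixes a copy $V$ of $Z_p$, and $P(V) \cong Z_p$ sits inside $W \subset \ell_p$; combined with \cite[Theorem 6.2]{KP} ($Z_p \not\cong \ell_p$) and the fact that complemented subspaces of $\ell_p$ are isomorphic to $\ell_p$ \cite[Theorem 2.a.3]{LT1}, this is a contradiction. Note that with this tool the block-subspace/perturbation reduction in your step (1) becomes unnecessary. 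If you want to keep your hands-on route via $\Omega_p$, you would have to prove a singularity statement of comparable strength for block subspaces of $X_p$ yourself, which is a substantial portion of the Kalton--Peck analysis rather than a ``restriction of the twisted sum'' computation; the efficient fix is simply to replace steps (2)--(3) by the appeal to \cite[Theorems 6.2 and 6.5]{KP}.
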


\begin{proof}
\cite[Section 6]{KP} gives us a short exact sequence
$$ 0 \longrightarrow \ell_p \overset{j_p}{\longrightarrow} Z_p
 \overset{q_p}{\longrightarrow} \ell_p \longrightarrow 0 , $$
where the injection $j_p$ is strictly cosingular, and the quotient map
$q_p$ is strictly singular. By \cite[Theorem 6.2]{KP}, $Z_p$ is not
isomorphic to $\ell_p$. By \cite[Theorem 6.5]{KP}, any non-strictly singular
operator on $Z_p$ fixes a copy of $Z_p$. Consequently, $j_p(\ell_p)$ contains
no complemented subspaces (by \cite[Theorem 2.a.3]{LT1}, any complemented
subspace of $\ell_p$ is isomorphic to $\ell_p$).
\end{proof}

% As a consequence, we show that suprojectivity is not stable under the ``gap metric''.

It is easy to see that subprojectivity is stable under isomorphisms.
However, it is not stable under a rougher measure of ``closeness'' of
Banach spaces -- the gap measure. If $Y$ and $Z$ are subspaces of a
Banach space $X$, we define the \emph{gap} (or \emph{opening})
$$
\Theta_X(Y,Z) = \max\big\{ \sup_{y \in Y, \|y\| = 1} \dist(y,Z),
\sup_{z \in Z, \|z\| = 1} \dist(z,Y) \big\} .
$$
We refer the reader to the comprehensive survey \cite{Os94} for more information.
Here, we note that $\Theta_X$ satisfies a ``weak triangle inequality'',
hence it can be viewed as a measure of closeness of subspaces.
The following shows that subprojectivity is not stable under $\Theta_X$.

\begin{proposition}\label{p:SP gap}
There exists a Banach space $X$ with a subprojective subspace $Y$ so that,
for every $\vr > 0$, $X$ contains a non-subprojective space $Z$ with $\Theta_X(Y,Z) \leq \vr$.
% If $1 < p < \infty$, then $Y = \ell_p \oplus_1 \ell_p$ can be isometrically
% embedded into a Banach space $X$ so that, for every $\vr > 0$, there exists
% a non-subprojective space $Z$ so that $\Theta_X(Y,Z) < \vr$.
\end{proposition}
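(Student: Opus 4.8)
The plan is to exploit Proposition~\ref{p:3 space}, which already manufactures, for each $1<p<\infty$, a non-subprojective space $Z_p$ sitting in an exact sequence $0 \to \ell_p \to Z_p \to \ell_p \to 0$ with the quotient map strictly singular. The idea is that a non-subprojective space $Z$ can be placed $\vr$-close (in the gap metric) to a subprojective space $Y$ inside a suitable common superspace $X$ precisely when $Z$ is built from a strictly singular perturbation of $Y$. So first I would take $p = 2$, set $Y = \ell_2 \oplus \ell_2$, which is subprojective by Proposition~\ref{p:dir_sum}, and realize $Z_2$ as living in the same ambient space $X$ as $Y$, with $Z_2$ obtained from $Y$ by a small perturbation of the ``graph'' type.

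Concretely, I would argue as follows. The quotient map $q_2: Z_2 \to \ell_2$ is strictly singular; equivalently, $Z_2$ contains a copy of $\ell_2$ (namely $j_2(\ell_2)$) on which the quotient norm is negligible, and complementary ``directions'' on which $q_2$ is an isomorphism. Pick a normalized basic sequence $(z_k)$ in $Z_2$ whose span maps isomorphically onto $\ell_2$ under $q_2$, and a normalized basic sequence $(w_k)$ spanning $j_2(\ell_2)$. Inside $X := Z_2 \oplus \ell_2$ (still subprojective-free in the relevant sense — actually $X$ itself need not be subprojective, only $Y$ must be), I would let $Y$ be the span of $(z_k \oplus 0)$ together with $(0 \oplus e_k)$, so that $Y \cong \ell_2 \oplus \ell_2$ is subprojective, and let $Z_\vr$ be the span of $(z_k \oplus 0)$ together with $(\vr_k w_k \oplus e_k)$ where $\vr_k \downarrow 0$ fast enough. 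Then $Z_\vr$ is isomorphic to $Z_2$ (it is the graph of a bounded operator $\ell_2 \to j_2(\ell_2)$ scaled by the $\vr_k$, composed into the $Z_2$ structure), hence not subprojective, while $\Theta_X(Y, Z_\vr) \le \sup_k \vr_k \|w_k\|$ can be made as small as we like, by a direct computation with normalized generators and the gap formula. Here I would use Remark~\ref{r:kernels} / Proposition~\ref{p:seq}-style small perturbation estimates to control the gap in terms of $\sum_k \vr_k$.

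The key steps, in order: (1) invoke Proposition~\ref{p:3 space} to get $Z_2$ non-subprojective with a strictly singular quotient map onto $\ell_2$ and $j_2(\ell_2) \cong \ell_2$ inside; (2) fix $X = Z_2 \oplus \ell_2$ and exhibit the subprojective subspace $Y \cong \ell_2 \oplus \ell_2$ spanned by a copy of the ``$q_2$-section'' of $Z_2$ and the second summand $\ell_2$; (3) for each target $\vr$, define $Z_\vr \subset X$ as a graph-type perturbation of $Y$ using rapidly decreasing weights $\vr_k$ on vectors from $j_2(\ell_2)$, and check $Z_\vr \cong Z_2$, so $Z_\vr$ is non-subprojective; (4) estimate $\Theta_X(Y, Z_\vr) \le \vr$ by comparing the two spanning systems term by term and applying the weak triangle inequality for $\Theta_X$ together with a small-perturbation bound.

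The main obstacle I anticipate is step (3): verifying that the perturbed space $Z_\vr$ is genuinely isomorphic to $Z_2$ (and not accidentally to $\ell_2 \oplus \ell_2$ or something subprojective) requires care, because the isomorphism type of a graph depends delicately on which operator is used, and scaling the perturbation by $\vr_k \to 0$ risks washing out exactly the strictly singular twist that makes $Z_2$ non-subprojective. The cleanest fix is probably not to scale the ``twist'' itself but to keep a fixed isomorphic copy of $Z_2$ sitting in $X$ and move only the \emph{embedding} of its $j_2(\ell_2)$-part toward the second summand $\ell_2$ of $X$ along directions that are $\vr_k$-small in $X$; then $Z_\vr$ is literally the image of $Z_2$ under a bounded invertible operator on $X$ that is $\vr$-close to an operator collapsing $Z_\vr$ onto $Y$, so isomorphism type is automatic and only the gap estimate remains, which is routine. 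I would double-check that this invertible operator exists with norm of its deviation from the identity controlled by $\sum_k \vr_k$, exactly as in the proof of Proposition~\ref{p:seq}.
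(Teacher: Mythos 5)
Your overall idea --- realizing the non-subprojective space as a graph-type perturbation sitting next to a subprojective one inside a common superspace --- is the same as the paper's, but your implementation contains a genuine gap that the ``fix'' in your last paragraph does not repair. You ask for a normalized basic sequence $(z_k)$ in $Z_2$ whose closed span is mapped isomorphically onto $\ell_2$ by the quotient map $q_2$. No such sequence exists: $q_2$ is strictly singular by construction (Proposition~\ref{p:3 space}), which means precisely that $q_2$ is not an isomorphism on any infinite dimensional subspace. Worse, if such a span did exist, the exact sequence $0 \to \ell_2 \to Z_2 \to \ell_2 \to 0$ would split, $Z_2$ would be isomorphic to $\ell_2 \oplus \ell_2 \cong \ell_2$ and hence subprojective, and your example would collapse; the whole point of the Kalton--Peck space is that $q_2$ admits no section. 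Your proposed repair runs into a closely related obstruction: you want to keep a fixed copy of $Z_2$ inside $X = Z_2 \oplus \ell_2$ and ``move only the $j_2(\ell_2)$-part'' by an invertible operator, but $j_2(\ell_2)$ contains no subspace complemented in $Z_2$ (this is exactly how Proposition~\ref{p:3 space} establishes non-subprojectivity), so $Z_2$ has no decomposition along which such an operator could be defined; and an invertible operator on $X$ that is $\vr$-close to the identity would only show that $Z_\vr$ is gap-close to a copy of $Z_2$, which is not subprojective, rather than to a subprojective $Y$.

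The paper's proof avoids any attempt to split $Z_p$. Writing $W = Z_p$, $W_0 = j_p(\ell_p)$ and $W^\prime = W/W_0$ with quotient map $q$, it sets $X = W \oplus_1 W^\prime$, takes $Y = W_0 \oplus_1 W^\prime$ --- which is isomorphic to $\ell_p \oplus \ell_p \sim \ell_p$, hence subprojective, even though it is not the ``limit'' of the construction in any naive sense --- and puts $Z_\vr = \{\vr w \oplus_1 qw : w \in W\}$. Since $\|\vr w \oplus_1 qw\| \geq \vr \|w\|$, the map $w \mapsto \vr w \oplus_1 qw$ is an isomorphism of $W$ onto $Z_\vr$, so $Z_\vr$ is not subprojective; and both halves of the gap estimate follow from the quotient-norm identity $\|qw\|_{W^\prime} = \dist(w, W_0)$: to approximate $\vr w \oplus_1 qw$ by an element of $Y$, replace $\vr w$ by $\vr w_0$ with $w_0 \in W_0$ almost realizing $\dist(w,W_0)$; to approximate $w_0 \oplus_1 w^\prime \in Y$ by an element of $Z_\vr$, lift $w^\prime$ to $w \in W$ with $\|w\|$ close to $\|w^\prime\|$ and take the element of $Z_\vr$ generated by $u = w_0/\vr + w$. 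No section of $q$ and no complemented decomposition of $W$ is ever required, which is what makes the argument go through.
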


\begin{proof}
Our $Y$ will be isomorphic to $\ell_p$, where $p \in (1,\infty)$ is fixed.
By Proposition \ref{p:3 space}, there exists a non-subprojective Banach space $W$,
containing a subspace $W_0$, so that both $W_0$  and $W^\prime = W/W_0$ are isomorphic to $\ell_p$.
Denote the quotient map $W \to W^\prime$ by $q$. Consider $X = W \oplus_1 W^\prime$ and
$Y = W_0 \oplus_1 W^\prime \subset E$. Furthermore, for $\vr > 0$, define
$Z_\vr = \{\vr w \oplus_1 qw : w \in W\}$. Clearly, $Y$ is isomorphic to $\ell_p \oplus \ell_p \sim \ell_p$,
hence subprojective, while $Z_\vr$ is isomorphic to $W$, hence not subprojective.
By \cite[Lemma 5.9]{Os94}, $\Theta_X(Y,Z_\vr) \leq \vr$.
% 
% Now let $X = \ell_1(E)/X_0$, where
% $X_0 = \{ (u_j)_{j \in \N} \in \ell_1(E) : \sum_j u_j = 0 \}$.
% % $X_0 = \{ (\alpha_1 w, \alpha_2 w, \ldots ) : w \in W , \sum_j |\alpha_j| < \infty \}$.
% Clearly $I_j : E \to X : e \mapsto [(0, \ldots, 0, e, 0, \ldots)]$ ($e$ occupies the
% $j$-th position, and $[\cdot]$ refers to the equivalence class) is an isometry,
% and moreover, $I_1(E) = I_2(E) = \ldots$. Let $Y = I_1(F)$. For $j \in \N$ let
% $Z_j = I_j(G_{1/j})$.
\end{proof}

% \begin{proposition}\label{p:no SP gap}
% WANT TO state the converse to previous prop - but cannot make it work so far.
% \end{proposition}

% \begin{proof}
% \cite[Theorem 1.2]{Zip}.
% \end{proof}

Looking at subprojectivity through the lens of Gowers dichotomy and observing that  a subprojective Banach space does not  contain  hereditarily indecomposable subspaces,  we immediately obtain the following. 

\begin{proposition}\label{p:unc bas}
Every subprojective space has a subspace with an unconditional basis.
\end{proposition}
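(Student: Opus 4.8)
The plan is to invoke the Gowers dichotomy theorem together with the structural fact recalled just above the statement. Recall that Gowers' dichotomy asserts that every Banach space contains a subspace that is either hereditarily indecomposable (HI) or has an unconditional basis. So if $X$ is subprojective, pick any subspace $Y \subset X$; by Gowers' theorem $Y$ contains a subspace $Y'$ that is either HI or has an unconditional basis. To finish, I would rule out the HI alternative: if $Y'$ were HI, then $Y'$ is a subspace of the subprojective space $X$, so it contains a further subspace $Z$ complemented in $X$. One then needs to derive a contradiction from the coexistence of being HI (or having an HI subspace) and containing a complemented subspace.

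The key step is the observation that a subprojective space cannot contain an HI subspace. Suppose $V \subset X$ is HI. Subprojectivity applied to $V$ gives an infinite-dimensional $Z \subset V$ complemented in $X$ via a projection $P$. Since $V$ is infinite-dimensional and $Z$ has infinite codimension in $V$ is not automatic — but I can instead argue as follows: being HI passes to subspaces, so every infinite-dimensional subspace of $V$ is itself HI, hence indecomposable, and in particular no infinite-dimensional, infinite-codimensional subspace of $V$ is complemented \emph{in $V$}. However, I need complementation in $X$, not in $V$. The clean route: if $Z \subset V$ is complemented in $X$ by a projection $P : X \to Z$, then $P|_V : V \to Z$ is a bounded projection of $V$ onto $Z$, so $Z$ is complemented \emph{in $V$}; choosing $Z$ to also be infinite-codimensional in $V$ (which we may, since $V$ is HI hence reflexive-like in having no minimal subspaces — more simply, any infinite-dimensional subspace contains a further infinite-dimensional proper subspace, and restricting attention to $Z$ inside a proper subspace $V_0 \subsetneq V$ of infinite codimension keeps $Z$ infinite-codimensional in $V$) contradicts indecomposability of $V$.

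So the order of steps is: (1) assume $X$ subprojective and fix a subspace; (2) apply Gowers' dichotomy to get a subspace that is HI or has an unconditional basis; (3) in the HI case, apply subprojectivity inside a proper infinite-codimensional subspace of the HI space to extract a complemented (in $X$, hence in the HI space) infinite-dimensional, infinite-codimensional subspace, contradicting indecomposability; (4) conclude we are in the unconditional basis case, which is exactly the claim. The main obstacle is step (3): making sure the complemented subspace produced by subprojectivity is also of infinite codimension in the HI subspace, so that the indecomposability is genuinely violated; this is handled by applying subprojectivity not to the whole HI subspace but to a proper subspace of it of infinite codimension (such a subspace exists since every infinite-dimensional Banach space has infinite-dimensional subspaces of infinite codimension, e.g. the closed span of a subsequence of a basic sequence).

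\begin{proof}
Let $X$ be subprojective, and let $Y \subset X$ be any subspace. By Gowers' dichotomy theorem, $Y$ contains a subspace $V$ which is either hereditarily indecomposable or has an unconditional basis. We claim the first alternative is impossible. Suppose $V$ is hereditarily indecomposable. Pick a seminormalized basic sequence $(v_k)$ in $V$, and let $V_0 = \span[v_{2k} : k \in \N]$, so that $V_0$ is a subspace of $V$ which has infinite codimension in $V$ (indeed $V_0 \cap \span[v_{2k-1}:k\in\N] = \{0\}$). By subprojectivity of $X$, $V_0$ contains a subspace $Z$ complemented in $X$ by a projection $P \in B(X)$ with range $Z$. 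Then $P|_V$ is a bounded projection from $V$ onto $Z$, so $Z$ is complemented in $V$; say $V = Z \oplus Z'$ for some closed subspace $Z'$ containing $\ker(P|_V) \supset V_0 \cap \ker P$, which is infinite dimensional since $V_0$ is infinite dimensional and $Z$ has infinite codimension in $V_0$. Thus $V$ decomposes into two infinite dimensional subspaces, contradicting the indecomposability of $V$. Hence $V$ has an unconditional basis, and since $V \subset Y \subset X$, the space $X$ has a subspace with an unconditional basis.
\end{proof}
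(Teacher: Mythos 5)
Your proof is correct and is essentially the paper's own argument: the paper proves this proposition in one line by combining Gowers' dichotomy with the observation that a subprojective space contains no hereditarily indecomposable subspace, which is exactly the dichotomy-plus-HI-exclusion route you take (and you correctly supply the detail the one-liner hides, namely applying subprojectivity inside an infinite-codimensional subspace $V_0$ of the putative HI space so that the resulting complemented $Z$ genuinely decomposes $V$). One small wording quibble: in the displayed proof you justify that $\ker(P|_V)$ is infinite dimensional by saying ``$Z$ has infinite codimension in $V_0$,'' which is not guaranteed; the correct (and sufficient) reason is that $Z\subseteq V_0$ and $V_0$ has infinite codimension in $V$, so $V/Z$ is infinite dimensional.
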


%\begin{proof}
%Suppose $X$ is subprojective. By Gowers dichotomy, either $X$ contains a hereditarily
%indecomposable subspace, or a subspace with an unconditional basis. The former,
%however, is incompatible with subprojectivity.
%\end{proof}

The converse to the above proposition is false.

\begin{proposition}\label{p:GM}
There exists a Banach space with an unconditional basis, without subprojective
subspaces.
\end{proposition}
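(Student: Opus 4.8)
The plan is to exhibit a concrete space with an unconditional basis all of whose subspaces fail to be subprojective, and the natural candidate is a Gowers--Maurey-type space with an unconditional basis---specifically the space $X_u$ constructed by Gowers (``A space not containing $c_0$, $\ell_1$ or a reflexive subspace'', or rather the unconditional analogue of the Gowers--Maurey space). The key structural feature we need is that $X_u$ has an unconditional basis, yet every (infinite-dimensional) subspace $Y$ of $X_u$ has the property that no subspace of $Y$ is complemented in $X_u$; equivalently, every bounded projection on $X_u$ has finite rank or finite corank, and moreover this ``indecomposability modulo the basis'' passes to all subspaces. In fact what one wants is the result, due to Gowers and Maurey, that in such spaces every operator from a subspace into the whole space is a strictly singular perturbation of a restriction of a multiple of the inclusion (the ``$\lambda I + S$'' property for operators between subspaces), which forbids the existence of infinite-dimensional infinite-codimensional complemented subspaces of any subspace.

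First I would recall the construction: $X_u$ is the completion of $c_{00}$ under an implicitly defined norm built from a rapidly increasing sequence of functionals using a Tsirelson-type lower estimate together with the Maurey--Rosenthal coding device, but with the crucial modification that the norm is $1$-unconditional (one takes absolute values of coordinates everywhere in the definition). I would cite Gowers--Maurey \cite{GM} (or Gowers's paper on the unconditional variant) for the fact that this space exists and has a $1$-unconditional basis. Second, I would invoke the key dichotomy-type theorem for such spaces: every subspace $Y \subset X_u$ contains a further subspace $Z$ such that every bounded operator $Z \to X_u$ has the form $\lambda \iota_Z + S$ with $\lambda$ a scalar, $\iota_Z$ the inclusion, and $S$ strictly singular. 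Third---and this is the heart of the argument---I would derive non-subprojectivity of every subspace from this. Suppose some subspace $Y \subset X_u$ were subprojective. Then $Y$ contains a subspace $Z$ as above, and by subprojectivity $Z$ contains a subspace $Z_0$ complemented in $X_u$, say by a projection $P$ onto $Z_0$. Restricting, $P|_Z : Z \to X_u$ has range $Z_0 \subset Z$, so $P|_Z = \lambda \iota_Z + S$; since $P|_Z$ is not injective on $Z$ (its range $Z_0$ is a proper subspace, so unless $\lambda = 0$ we could argue $P$ is already an isomorphism onto a finite-codimensional piece... ) one shows $\lambda = 0$, whence $P|_Z = S$ is strictly singular, so $P|_{Z_0}$ is strictly singular; but $P|_{Z_0} = \id_{Z_0}$, which is not strictly singular on the infinite-dimensional space $Z_0$---contradiction. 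Thus no subspace of $X_u$ is subprojective.

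The main obstacle I anticipate is pinning down the precise form of the operator-space dichotomy I need and making the case analysis on $\lambda$ airtight. In the Gowers--Maurey framework one must be slightly careful: the statement ``$\lambda I + S$'' is usually phrased for operators defined on the whole space, or on a subspace with values in the subspace; for a projection onto $Z_0 \subset Z$ viewed as a map $Z \to X_u$ one genuinely uses the version for operators from a subspace into $X_u$. The alternative route, which avoids the operator dichotomy entirely, is to use directly the hereditary indecomposability-type conclusion for the unconditional space: namely that $X_u$ has no infinite-dimensional complemented subspace that is ``small'' in the appropriate sense, and that this is inherited. I would present the argument via strictly singular operators as above since Proposition~\ref{p:compare} and Corollary~\ref{c:complem} are already in hand and make the bookkeeping clean, and I would simply cite the relevant theorem of Gowers--Maurey (and Gowers's unconditional construction) for the structural input, rather than reproving it.
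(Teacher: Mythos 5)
There is a fatal gap: the structural theorem you invoke is the wrong one, and in fact cannot hold in the setting of this proposition. The ``every operator $Z \to X$ is $\lambda \iota_Z + S$ with $S$ strictly singular'' dichotomy is the Gowers--Maurey theorem for \emph{hereditarily indecomposable} spaces, and no space with an unconditional basis can contain a subspace $Z$ with that property. Indeed, any subspace $Z$ of a space with an unconditional basis is a small perturbation of a block subspace, and the block basis $(z_k)$ of $Z$ is then itself unconditional; the alternating diagonal map $z_{2k} \mapsto z_{2k}$, $z_{2k-1} \mapsto 0$ is therefore a bounded operator on $Z$ which is a projection of infinite rank and infinite corank, and such an operator is never of the form $\lambda I_Z + S$ (restricting to its kernel forces $\lambda = 0$, restricting to its range forces $\lambda = 1$). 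So the hypothesis on which your entire contradiction rests is false for every subspace of every candidate space, and the argument collapses. This is not a technicality: the whole difficulty of the proposition is precisely that a space with an unconditional basis necessarily admits many bounded infinite-rank, infinite-corank (diagonal) projections on every block subspace, so one cannot hope to kill \emph{all} complemented subspaces of a subspace $Y$; one must instead locate, inside each $Y$, a carefully chosen further subspace that avoids all of them.

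The correct structural input for the unconditional Gowers--Maurey space of \cite[Section 5]{GM97} is that every operator on $X$ (and, with some extra work, on every block subspace $Y$) is a \emph{diagonal} operator plus a strictly singular one, not a multiple of the identity plus a strictly singular one. The paper's proof then has to do genuine additional work that your scheme omits: it passes to a normalized block basis $(y_j)$ of $Y$ with $\|y_j\|_{(j)} < 1 + 2^{-j}$ (so that the strictly singular part $S$ of any operator satisfies $S y_j \to 0$), and it targets the ``paired'' subspace $Z = \span[y_1+y_2,\, y_3+y_4, \ldots]$, which is transverse to all diagonal operators. Writing a putative infinite-rank projection $P$ from $Y$ onto a subspace of $Z$ as $\Lambda' + S'$ with $\Lambda'$ a $0$--$1$ diagonal projection and $S' y_j \to 0$, one observes that $\|P y_j - y_j\| \geq 1/2$ for every $j$ because $P y_j$ lies in $Z$, while $P y_j - y_j = S' y_j \to 0$ along the infinite set where $\lambda_j' = 1$; this is the contradiction. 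If you want to salvage your write-up, you must replace the $\lambda \iota + S$ dichotomy by the diagonal-plus-strictly-singular theorem and then supply an analogue of this pairing argument; as written, the proposal proves nothing.
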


\begin{proof}
In \cite[Section 5]{GM97}, T.~Gowers and B.~Maurey construct
a Banach space $X$ with a $1$-unconditional
basis, so that any operator on $X$ is a strictly singular perturbation of a diagonal operator.
We prove that $X$ has no subprojective subspaces. In doing so, we are re-using the notation of
that paper. In particular, for $n \in \N$ and $x \in X$, we define $\|x\|_{(n)}$ as the supremum
of $\sum_{i=1}^n \|x_i\|$, where $x_1, \ldots, x_n$ are successive vectors so that $x = \sum_i x_i$.
It is known that, for every block subspace $Y$ in $X$, every $c > 1$, and every $n \in \N$, there
exists $y \in Y$ so that $1 = \|y\| \leq \|y\|_{(n)} < c$. This technical result can be used
to establish a remarkable property of $X$: suppose $Y$ is a subspace of $X$, with a
normalized block basis $(y_k)$. Then any zero-diagonal (relative to the basis $(y_k)$)
operator on $Y$ is strictly singular. Consequently, any $T \in B(Y)$ can be written as
$T = \Lambda + S$, where $\Lambda$ is diagonal, % (relative to the basis $(y_k)$),
and $S$ is zero-diagonal, hence strictly singular.
This result is proved in \cite{GM97} for $Y = X$, but an inspection yields the
generalization described above.

Suppose, for the sake of contradiction, that $X$ contains a subprojective subspace $Y$.
A small perturbation argument shows we can assume $Y$ to be a block subspace.
Blocking further, we can assume that $Y$ is spanned by a block basis $(y_j)$, so that
$1 = \|y_j\| \leq \|y_j\|_{(j)} < 1 + 2^{-j}$. We achieve the desired contradiction
by showing that no subspace of $Z = \span[y_1+y_2, y_3+y_4, \ldots]$ is complemented in $Y$.

Suppose $P$ is an infinite rank projection from $Y$ onto a subspace of $Z$.
Write $P = \Lambda + S$, where $S$ is a strictly singular operator with zeroes
on the main diagonal, and $\Lambda = (\lambda_j)_{j=1}^\infty$ is a diagonal operator
(that is, $\Lambda y_j = \lambda_j y_j$ for any $j$).
As $\sup_j \|y_j\|_{(j)} < \infty$, by \cite[Section 5]{GM97} we have $\lim_j S y_j = 0$.
Note that $(\Lambda + S)^2 = \Lambda + S$, hence
$\diag (\lambda_j^2 - \lambda_j) = \Lambda^2 - \Lambda = S - \Lambda S - S \Lambda - S^2$
is strictly singular, or equivalently, $\lim_j \lambda_j (1 - \lambda_j) = 0$.
Therefore, there exists a $0-1$ sequence $(\lambda_j^\prime)$
so that $\Lambda^\prime - \Lambda$ is compact (equivalenty,
$\lim_j (\lambda_j - \lambda_j^\prime) = 0$), where
$\Lambda^\prime = \diag(\lambda_j^\prime)$ is a diagonal projection.
Then $P = \Lambda^\prime + S^\prime$, where $S^\prime = S + (\Lambda - \Lambda^\prime)$
is strictly singular, and satisfies $\lim_j S^\prime y_j = 0$.
% Therefore, $\Lambda^\prime$ has infinite rank.
The projection $P$ is not strictly singular (since it is of infinite rank),
hence $\Lambda^\prime = P - S^\prime$ is not strictly singular. Consequently, the set
$J = \{j \in \N : \lambda_j^\prime = 1\}$ is infinite.

Now note that, for any $j$, $\|P y_j - y_j\| \geq 1/2$.
Indeed, $P y_j \in Z$, hence we can write $P y_j = \sum_k \alpha_k (y_{2k-1} + y_{2k})$.
Let $\ell = \lceil j/2 \rceil$. By the $1$-unconditionality of our basis,
$\|y_j - P y_j\| \geq \|y_j - \alpha_\ell (y_{2\ell-1} + y_{2\ell})\| \geq
\max\{|1-\alpha_\ell|, |\alpha_\ell|\} \geq 1/2$. For $j \in J$,
$S^\prime y_j = P y_j - y_j$, hence $\|S^\prime y_j\| \geq 1/2$, which contradicts
$\lim_j \|S^\prime y_j\| = 0$.
\end{proof}

\begin{remark}
The preceding statement provides an example of an atomic order continuous Banach lattice without subprojective subspaces.
One can also observe that if a Banach lattice is not order continuous, then it contains a subprojective subspace $c_0$. Also, if a Banach lattice is non-atomic order continuous with an unconditional basis, then it contains a subprojective subspace $\ell_2$ (i.e. \cite[Theorem~2.3]{KW}). 
\end{remark}

Finally, one might ask whether, in the definition of subprojectivity, the projections from $X$ onto $Z$
can be uniformly bounded. More precisely, we call a Banach space $X$ \emph{uniformly subprojective}
({\emph{with constant $C$}}) if, for every subspace $Y \subset X$, there exists a subspace $Z \subset Y$
and a projection $P : X \to Z$ with $\|P\| \leq C$. The proof of \cite[Proposition 2.4]{GMASB}
essentially shows that the following spaces are uniformly subprojective:
% \begin{enumerate}
% \item 
(i) $\ell_p$ ($1 \leq p < \infty$) and $c_0$;
% \item
(ii) the Lorentz sequence spaces $\lore_{p,w}$;
% \item
% (iii) $L_p(0,1)$, for $2 \leq p < \infty$;
% \item
% Any separable $c_0$-saturated space.
% \item
% $C(K)$, where $K$ is scattered Hausdorff compact, and $C(K)$ is separable.
% \item
(iii) the Schreier space;
% \item
(iv) the Tsirelson space;
% \item
(v) the James space.
% \end{enumerate}
Additionally, $L_p(0,1)$ is uniformly subprojective for $2 \leq p < \infty$.
This can be proved by combining Kadets-Pelczynski dichotomy with the results of
\cite{Al:09} about the existence of ``nicely complemented'' copies of $\ell_2$.
Moreover, any $c_0$-saturated separable space is uniformly subprojective, since
any isomorphic copy of $c_0$ contains a $\lambda$-isomorphic copy of $c_0$,
for any $\lambda > 1$ \cite[Proposition 2.e.3]{LT1}. By Sobczyk's Theorem, a
$\lambda$-isomorphic copy of $c_0$ is $2\lambda$-complemented in every separable superspace.
In particular, if $K$ is a countable metric space, then $C(K)$ is uniformly subprojective
\cite[Theorem 12.30]{FHHMPZ}.

However, in general, subprojectivity need not be uniform. Indeed, suppose
$2 < p_1 < p_2 < \ldots < \infty$, and $\lim_n p_n = \infty$.
By Proposition \ref{p:dir_sum}(b), $X = (\sum_n L_{p_n}(0,1))_2$
is subprojective. The span of independent Gaussian random variables in $L_p$
(which we denote by $G_p$) is isometric to $\ell_2$. Therefore, by \cite[Corollary 5.7]{GLR},
any projection from $L_p$ onto $G_p$ has norm at least $c_0 \sqrt{p}$, where 
$c_0$ is a universal constant. Thus, $X$ is not uniformly subprojective.

\section{Subprojectivity of tensor products and spaces of operators}\label{s:tens_prod}

Suppose $X_1$, $X_2$, \ldots, $X_k$ are Banach spaces with unconditional FDD, implemented by finite
rank projections $(P^\prime_{1n})$, $(P^\prime_{2n})$, \ldots, $(P^\prime_{kn})$, respectively.
That is, $P^\prime_{in} P^\prime_{im} = 0$ unless $n=m$,
$\lim_N \sum_{n=1}^N P^\prime_{in} = I_{X_i}$ point-norm, and
$\sup_{N,\pm} \|\sum_{n=1}^N \pm P^\prime_{in}\| < \infty$ (this quantity is sometimes
referred to as \emph{the FDD constant of $X_i$}). Let $E_{in} = \ran(P^\prime_{in})$.

We say that a sequence $(w_j)_{j=1}^\infty \subset X_1 \otimes X_2 \otimes  \ldots \otimes X_k$ is
\emph{block-diagonal} if there exists a sequence $0 = N_1 < N_2 < \ldots$
so that 
$$
w_j \in \big(\sum_{n=N_j+1}^{N_{j+1}} E_{1n}\big) \otimes
\big(\sum_{n=N_j+1}^{N_{j+1}} E_{2n}\big)\otimes \ldots  \otimes
\big(\sum_{n=N_j+1}^{N_{j+1}} E_{kn}\big).
%  exist sequences $0 = N_{i1} < N_{i2} < \ldots$
% ($i=1,2$) so that $w_j \in \big(\sum_{n=N_{1,j}+1}^{N_{1,j+1}} E_{1n}\big) \otimes
%  \big(\sum_{n=N_{2,j}+1}^{N_{2,j+1}} E_{2n}\big)$.
$$
Suppose $\ce$ is an unconditional sequence space,
% a symmetric sequence space,
and $\anytens$ is a tensor product of
Banach spaces. The Banach space $X_1  \anytens X_2 \anytens \ldots \anytens X_k$ is said to \emph{satisfy the $\ce$-estimate}
if there exists a constant $C \geq 1$ so that, for any block diagonal sequence
$(w_j)_{j \in \N}$ in $X_1  \anytens X_2 \anytens \ldots \anytens X_k$, we have
\begin{equation}
C^{-1} \|(\|w_j\|)_{j \in \N}\|_\ce \leq \|\sum_j w_j\| \leq C \|(\|w_j\|)_{j \in \N}\|_\ce
\label{eq:ce est}
\end{equation}

\begin{theorem}\label{t:ce est}
Suppose $X_1$, $X_2$, \ldots, $X_k$ are subprojective Banach spaces with unconditional FDD,
and $\anytens$ is a tensor product.
Suppose, furthermore, that for any finite increasing sequence
${\mathbf{i}} = [1 \leq i_1 < \ldots < \ldots i_\ell \leq k]$, there
exists an unconditional sequence space $\ce_{{\mathbf{i}}}$, so that
$X_{i_1}  \anytens X_{i_2} \anytens \ldots \anytens X_{i_k}$ satisfies
the $\ce_{\mathbf{i}}$-estimate. Then
$X_1  \anytens X_2 \anytens \ldots \anytens X_k$ is subprojective.
\end{theorem}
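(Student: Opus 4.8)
The plan is to argue by induction on $k$, mirroring the two-case structure of the proof of Proposition \ref{p:dir_sum}(b), with the FDD projections playing the role of the coordinate projections there. The base case $k=1$ is the hypothesis that $X_1$ is subprojective. For the inductive step, set $X = X_1 \anytens \ldots \anytens X_k$, and let $Y \subset X$ be an arbitrary subspace; I must produce a subspace $Y_0 \subset Y$ complemented in $X$. For an increasing sequence $\mathbf{i}$ of indices, let $Q^{\mathbf{i}}_N$ denote the natural ``truncation'' projection on $X_{i_1} \anytens \ldots \anytens X_{i_\ell}$ associated to applying $\sum_{n=1}^N P'_{i_s n}$ in each coordinate $i_s$; these are uniformly bounded by the product of the FDD constants, and converge point-norm to the identity. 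On $X$ itself write $Q_N = Q^{[1,\ldots,k]}_N$ and $Q_N^\perp = I - Q_N$.

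The first case is when, for some $N$, some subspace $Z \subset Y$, the map $Q_N|_Z$ is an isomorphism onto its image. Here $Q_N(X)$ is a finite sum of tensor products of finite-dimensional spaces, hence finite-dimensional — wait, that is false in general; rather $Q_N(X)$ is $\big(\bigoplus_{n\le N} E_{1n}\big) \anytens \ldots$, which \emph{is} finite-dimensional since each $E_{in}$ is finite rank and the tensor product of finitely many finite-dimensional spaces is finite-dimensional. So this case cannot actually occur for infinite-dimensional $Z$; equivalently, $Q_N|_Y$ is never an isomorphism, and we are always in the second case. (If one prefers to keep a symmetric exposition, note instead $Q_N$ is a finite-rank operator, so $Q_N|_Y$ strictly singular automatically.)

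So the real work is the second case: $Q_N|_Y$ is strictly singular for every $N$. Then, exactly as in Proposition \ref{p:dir_sum}(b), choose $\vr_i \downarrow 0$ fast, pick $0 = N_1 < N_2 < \ldots$ and norm-one $y_i \in Y$ with $\|Q_{N_i} y_i\|, \|Q^\perp_{N_{i+1}} y_i\| < \vr_i$, and by the Principle of Small Perturbations (Proposition \ref{p:seq}) replace the $y_i$ by a nearby block-diagonal sequence $y_i'$ living in $\big(\sum_{n=N_i+1}^{N_{i+1}} E_{1n}\big) \anytens \ldots \anytens \big(\sum_{n=N_i+1}^{N_{i+1}} E_{kn}\big)$; it suffices to complement a subspace of $\span[y_i' : i]$. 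Now the two ingredients kick in. First, the block-diagonal structure together with the $\ce_{[1,\ldots,k]}$-estimate \eqref{eq:ce est} shows that $\span[y_i' : i]$ is $C$-isomorphic, via $y_i' \mapsto \|y_i'\| e_i$, to a block basis of $\ce := \ce_{[1,\ldots,k]}$, and moreover — because the $y_i'$ have disjoint ``$N$-blocks'' in each coordinate and each $E_{in}$ is finite-dimensional — each individual summand $\span[y_i']$, hence any subset-sum, is complemented in $X$ by a uniformly bounded projection built coordinatewise from Hahn–Banach functionals (again as in Proposition \ref{p:dir_sum}(b), using the unconditionality of $\ce$). Second, since $\ce$ is subprojective by hypothesis — wait: $\ce_{[1,\ldots,k]}$ is only assumed to be an unconditional sequence space satisfying the estimate, not assumed subprojective. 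I must instead obtain subprojectivity of $\span[y_i' : i]$ from the \emph{inductive hypothesis} applied to a $(k-1)$-fold product, or from subprojectivity of the $X_j$'s directly.

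\textbf{The main obstacle}, and the step I would spend the most care on, is exactly this: extracting a complemented-in-$X$ subspace of the block-diagonal span $\span[y_i']$ without assuming $\ce_{[1,\ldots,k]}$ subprojective. The intended mechanism is presumably: pass to a further block-diagonal subsequence on which one of the $k$ tensor factors ``degenerates'' (its $E_{i_0 n}$-components along the $y_i'$ span a space isomorphic to a fixed finite-dimensional space, or the $y_i'$ are asymptotically supported away from factor $i_0$), so that $\span[y_i']$ embeds isomorphically, with complemented image, into a $(k-1)$-fold tensor product $X_{i_1} \anytens \ldots \widehat{X_{i_0}} \ldots \anytens X_{i_k}$, which is subprojective by the inductive hypothesis (its sub-products satisfy the required $\ce_{\mathbf{i}}$-estimates since those $\mathbf{i}$ are among the ones hypothesized). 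Then Corollary \ref{c:complem} finishes. I would formalize the dichotomy ``either some coordinate degenerates along a subsequence, or the $y_i'$ genuinely spread in all $k$ coordinates'' and, in the non-degenerate case, use the $\ce$-estimate to identify $\span[y_i']$ with a block subspace of the sequence space $\ce_{[1,\ldots,k]}$ whose \emph{individual atoms} $\span[y_i']$ are finite-dimensional and complemented in $X$ — reducing subprojectivity of $\span[y_i']$ to that of a sequence space built from finite-dimensional pieces, handled by the same small-perturbation-plus-coordinate-projection argument used for $(\sum_n X_n)_\ce$ in Proposition \ref{p:dir_sum}(b). Getting the quantitative bookkeeping of this last reduction right — in particular uniform boundedness of all the projections involved, and making sure the degenerate-coordinate case really does land inside one of the \emph{hypothesized} sub-products — is where the proof needs to be written with care.
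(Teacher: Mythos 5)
Your proposal has a genuine gap, and it is located exactly where you sensed trouble, though the real problem occurs one step earlier than you think. The dichotomy you set up, based only on the full truncations $Q_N = P_{1N}\otimes\cdots\otimes P_{kN}$, does not produce block-diagonal vectors. From $\|Q_{N_i}y_i\|$ small and $\|Q_{N_{i+1}}^\perp y_i\|$ small you only get $y_i \approx (Q_{N_{i+1}}-Q_{N_i})y_i$, and the range of $Q_{N_{i+1}}-Q_{N_i}$ is \emph{not} $\big(\sum_{N_i<n\le N_{i+1}}E_{1n}\big)\anytens\cdots\anytens\big(\sum_{N_i<n\le N_{i+1}}E_{kn}\big)$: already for $k=2$ it contains off-diagonal corners such as $E_{11}\otimes E_{2,N_i+5}$, since $P_{1,N_{i+1}}\otimes P_{2,N_{i+1}}-P_{1,N_i}\otimes P_{2,N_i}=(P_{1,N_{i+1}}-P_{1,N_i})\otimes P_{2,N_{i+1}}+P_{1,N_i}\otimes(P_{2,N_{i+1}}-P_{2,N_i})$. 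So the $\ce$-estimate \eqref{eq:ce est}, which applies only to genuinely block-diagonal sequences, cannot be invoked for your $y_i'$, and the whole second half of your argument has nothing to stand on. The paper's proof repairs this by introducing the corner projections $R_n=P_{1n}^\perp\otimes\cdots\otimes P_{kn}^\perp$ and running the dichotomy on whether $R_n^\perp|_Y$ is strictly singular for every $n$: when it is, one extracts $x_m\approx R_{n_{m-1}}Q_{n_m}x_m$, and $R_{n_{m-1}}Q_{n_m}=(P_{1,n_m}-P_{1,n_{m-1}})\otimes\cdots\otimes(P_{k,n_m}-P_{k,n_{m-1}})$ really is the block you need.

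The other branch of that dichotomy is precisely the ``degenerate coordinate'' case you describe as ``presumably'' the mechanism but never carry out, and it is the only place the induction hypothesis is used: one writes $R_n^\perp=\sum_{i=1}^k P^{(i)}$ with $P^{(i)}=P_{1n}^\perp\otimes\cdots\otimes P_{i-1,n}^\perp\otimes P_{in}\otimes I\otimes\cdots\otimes I$, observes that $\ran P^{(i)}$ embeds into $\ell_\infty^N(X^{(i)})$ where $X^{(i)}$ is the $(k-1)$-fold product omitting the $i$-th factor, and concludes from the induction hypothesis and Proposition \ref{p:dir_sum} that $\ran R_n^\perp$ is subprojective; Corollary \ref{c:complem} then finishes that case. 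Finally, your worry about needing $\ce_{[1,\ldots,k]}$ to be subprojective is a red herring: in the block-diagonal case the \emph{entire} span of the $x_m$'s is complemented in $X$ (Tong's trick gives a contractive projection onto the block space $W$, Hahn--Banach gives norm-one projections $U_m$ onto $\span[x_m]$ inside each block, and the $\ce$-estimate shows $\sum_m U_m$ is bounded on $W$), so no further passage to a subspace, and no subprojectivity of $\ce$, is required. As written, your argument is not a proof; the two missing ingredients are the corner projections $R_n$ and the telescoping decomposition of $R_n^\perp$ that feeds the induction.
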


A similar result for ideals of operators holds as well.
We keep the notation for projections implementing the FDD in Banach spaces $X_1$ and $X_2$.
We say that a Banach operator ideal $\A$ is suitable (for the pair $(X_1,X_2)$) if
the finite rank operators are dense in $\A(X_1, X_2)$ (in its ideal norm).
We say that a sequence $(w_j)_{j \in \N} \subset \A(X_1, X_2)$ is \emph{block diagonal} 
if there exists a sequence $0 = N_1 < N_2 < \ldots$ so that, for any $j$,
$w_j = (P_{2,N_j} - P_{2,N_{j-1}}) w_j (P_{1,N_j} - P_{1,N_{j-1}})$.
If $\ce$ is an unconditional sequence space, we say that $\iK(X_1, X_2)$
\emph{satisfies the $\ce$-estimate} if, for some constant $C$,
\begin{equation}
C^{-1} \|(\|w_j\|)_j\|_{\ce} \leq \|\sum_j w_j\|_{\A} \leq C \|(\|w_j\|)_j\|_{\ce}
\label{eq:ce ext ops}
\end{equation}
holds for any finite block-diagonal sequence $(w_j)$.

\begin{theorem}\label{t:ce est ops}
Suppose $X_1$ and $X_2$ are Banach spaces with unconditional FDD, so that
$X_1^*$ and $X_2$ are subprojective. Suppose, furthermore,that the ideal $\A$
is suitable for $(X_1,X_2)$, and $\A(X_1,X_2)$ satisfies the $\ce$-estimate
for some unconditional sequence $\ce$. Then $\A(X_1, X_2)$ is subprojective.
\end{theorem}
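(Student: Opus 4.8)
The plan is to mimic the proof of Theorem \ref{t:ce est}, but working on the operator side and exploiting the $\ce$-estimate \eqref{eq:ce ext ops} together with the small perturbation principle (Proposition \ref{p:seq}) and the subprojectivity transfer lemma (Corollary \ref{c:complem}). Let $Y \subset \A(X_1,X_2)$ be an arbitrary subspace; I must find a further subspace complemented in $\A(X_1,X_2)$. Since finite rank operators are dense in $\A(X_1,X_2)$, I may replace $Y$ by a subspace spanned by a seminormalized basic sequence of finite rank operators, and then, after a gliding-hump argument using the FDD projections $(Q_{1,N} := P_{1,N})$ and $(Q_{2,N} := P_{2,N})$ on $X_1$ and $X_2$, split into two cases exactly as in the proof of Proposition \ref{p:dir_sum}(b): either (i) for some $N$ the map $w \mapsto Q_{2,N} w Q_{1,N}$ is an isomorphism on some subspace $Z \subset Y$, or (ii) one can extract from $Y$ a seminormalized \emph{block-diagonal} sequence $(w_j)$ (after a small perturbation, genuinely block-diagonal with respect to the given FDD's).

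In case (i), the range of the ``corner'' map $w \mapsto Q_{2,N} w Q_{1,N}$ lands in a space of operators factoring through finite-dimensional spaces, which is isomorphic to a finite direct sum of copies of pieces of $\A(X_1,X_2)$ built from $X_1^*$-type and $X_2$-type blocks; more directly, $Q_{2,N}\A(X_1,X_2)Q_{1,N}$ is finite dimensional only in each coordinate but its closure is a complemented subspace isomorphic to a space of the form $\calb(\ran Q_{1,N}, \ran Q_{2,N})$ tensored appropriately — I will instead simply note that this corner space embeds isomorphically, via $w \mapsto w^*$ composed with coordinates, into a finite $\ell_\infty$-sum involving $X_1^*$ and $X_2$, both subprojective; then Proposition \ref{p:dir_sum}(a) and Corollary \ref{c:complem} finish. (This is the step where I will have to be slightly careful: the cleanest route is probably to observe that $Q_{2,N}\A(X_1,X_2)Q_{1,N}$ is itself complemented in $\A(X_1,X_2)$ via $w \mapsto Q_{2,N}wQ_{1,N}$, so it suffices to prove \emph{that} space is subprojective, and it is isomorphic to a space of matrices with entries in subprojective spaces, handled by induction / finite direct sums.)

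In case (ii), I have a seminormalized block-diagonal sequence $(w_j)$ with $w_j = (P_{2,N_j}-P_{2,N_{j-1}})w_j(P_{1,N_j}-P_{1,N_{j-1}})$. Normalize so $\|w_j\|=1$. The $\ce$-estimate \eqref{eq:ce ext ops} shows that $J : \span[w_j] \to \ce$, $\sum_j \alpha_j w_j \mapsto (\alpha_j)_j$, is an isomorphism onto its image, and that there is a bounded projection $R$ from $\overline{\span}[w_j : j]$'s \emph{block-diagonal superspace} onto $\span[w_j]$ — more precisely, the map $R : w \mapsto \sum_j (P_{2,N_j}-P_{2,N_{j-1}})\, w\, (P_{1,N_j}-P_{1,N_{j-1}})$ is a bounded projection of $\A(X_1,X_2)$ onto the block-diagonal subspace $D := \{w : w = \sum_j (P_{2,N_j}-P_{2,N_{j-1}})w(P_{1,N_j}-P_{1,N_{j-1}})\}$, and \eqref{eq:ce ext ops} identifies $D$ (up to isomorphism) with $(\sum_j \A(\ran(P_{1,N_j}-P_{1,N_{j-1}}), \ran(P_{2,N_j}-P_{2,N_{j-1}})))_\ce$, a $\ce$-sum of finite-dimensional spaces, hence subprojective by Proposition \ref{p:dir_sum}(b) since $\ce$ is subprojective (each finite-dimensional summand trivially is). Now $\span[w_j] \subset Y' \subset D$ where $Y'$ is the slightly-perturbed copy of the original subspace; by subprojectivity of $D$ there is a subspace $W \subset \span[w_j]$ complemented in $D$, hence (composing with $R$) complemented in $\A(X_1,X_2)$; pulling back through the small perturbation via Proposition \ref{p:seq} and Remark \ref{r:kernels} gives the desired complemented subspace inside the original $Y$.

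The main obstacle I anticipate is making case (i) fully rigorous: one must verify that $Q_{2,N}\A(X_1,X_2)Q_{1,N}$ is genuinely complemented in $\A(X_1,X_2)$ with the expected structure (this uses only that $(P_{1,n})$, $(P_{2,n})$ are FDD's, so $w \mapsto Q_{2,N}wQ_{1,N}$ is a bounded idempotent), and that this corner space is subprojective — for which the natural argument is that it is isomorphic to $\ran Q_{1,N}^* \otimes \ran Q_{2,N}$ sitting inside, but since $\ran Q_{1,N}$ is finite-dimensional, $Q_{2,N}\A(X_1,X_2)Q_{1,N}$ is isomorphic to a finite $\ell_\infty^{(\dim)}$-sum of copies of a complemented subspace of $X_2$, which is subprojective by Proposition \ref{p:dir_sum}(a). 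A secondary technical point is the gliding-hump extraction of the block-diagonal sequence in case (ii): one needs both $P_{1,N}w_j \to$ small on the left and $(I-P_{2,M})w_j \to$ small on the right simultaneously, which follows from the FDD structure on $X_1$ and $X_2$ combined with the assumption (failure of case (i)) that no corner map is an isomorphism on a subspace of $Y$ — standard, but it should be spelled out that strict singularity of each corner map on $Y$ is what drives the diagonalization.
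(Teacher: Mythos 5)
Your case (ii) argument (Tong's trick for the projection onto the block‑diagonal subspace, Hahn--Banach in each block, and the $\ce$-estimate to assemble the pieces) is essentially the paper's, and is fine once a genuinely block‑diagonal sequence is in hand. The gap is in the dichotomy that is supposed to produce that sequence. Your case (i) tests whether the double corner $w \mapsto Q_{2,N}\, w\, Q_{1,N} = P_{2,N}\, w\, P_{1,N}$ is an isomorphism on some subspace of $Y$. But $P_{1,N}$ and $P_{2,N}$ are \emph{finite rank} (they are partial sums of FDDs implemented by finite rank projections), so $P_{2,N}\,\A(X_1,X_2)\,P_{1,N}$ is a finite-dimensional space and the corner map is a finite rank operator; it is never an isomorphism on an infinite-dimensional subspace, and your case (i) is vacuous (your own description of this corner as ``a finite $\ell_\infty$-sum of copies of a complemented subspace of $X_2$'' cannot be right for the same reason). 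Consequently you are always in case (ii) --- yet the block-diagonal extraction there genuinely fails: take $Y$ to consist of operators of the form $P'_{2,1}w$ (all mass in the first row block). Every double corner is strictly singular on such a $Y$, but no block-diagonal sequence can be extracted from it, since $(P_{2,N_j}-P_{2,N_{j-1}})w=0$ for $j\geq 2$. Such a $Y$ is (isomorphic to) a subspace of finitely many copies of $X_1^*$; note that your proposal never invokes the hypothesis that $X_1^*$ is subprojective, which is a symptom of exactly this omission.

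The paper's sketch repairs this by basing the dichotomy on the \emph{tail-corner} compression $R_N : w \mapsto P_{2,N}^\perp\, w\, P_{1,N}^\perp$ rather than the head corner. Its complement $R_N^\perp w = P_{2,N}\,w + P_{2,N}^\perp\, w\, P_{1,N}$ has range contained in the sum of the ``first $N$ row blocks'' (isomorphic to finitely many copies of $X_1^*$) and the ``first $N$ column blocks'' (isomorphic to finitely many copies of $X_2$); this range is subprojective by Proposition \ref{p:dir_sum}(a), using \emph{both} hypotheses. Then either some $R_N^\perp$ is not strictly singular on $Y$, and Corollary \ref{c:complem} finishes, or every $R_N^\perp$ is strictly singular on $Y$, which is precisely the condition that lets you push a normalized sequence into the tail corners $P_{2,N}^\perp(\cdot)P_{1,N}^\perp$ and (after truncating from above using the density of finite rank operators, i.e.\ the suitability of $\A$) perturb it to a genuinely block-diagonal sequence. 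With that substitution your case (ii) goes through as written.
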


Before proving these theorems, we state a few consequences.

\begin{corollary}\label{c:l_p l_q}
The spaces $X_1 \injtens \ldots \injtens X_n$ and
$X_1 \projtens \ldots \projtens X_n$ are subprojective where $X_i$
is ether isomorphic to $\ell_{p_i}$ $(1 \le p_i <\infty)$ or $c_0$
for every $ 1 \le i \le n$.
\end{corollary}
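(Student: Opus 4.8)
The plan is to deduce Corollary \ref{c:l_p l_q} as a direct application of Theorem \ref{t:ce est}, so the work splits into two parts: checking the structural hypotheses (subprojectivity and unconditional FDD of the factors) and verifying that every sub-tensor-product $X_{i_1} \anytens \ldots \anytens X_{i_\ell}$ satisfies a $\ce_{\mathbf{i}}$-estimate for a suitable unconditional sequence space $\ce_{\mathbf{i}}$. For the first part: each $X_i$, being isomorphic to $\ell_{p_i}$ ($1 \leq p_i < \infty$) or to $c_0$, is subprojective (this is classical, and in any case follows from the discussion preceding Proposition \ref{p:dir_sum}) and has a $1$-unconditional basis, hence a $1$-unconditional FDD given by the coordinate one-dimensional projections. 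Since subprojectivity and the $\ce$-estimate are isomorphism-invariant, we may as well assume $X_i = \ell_{p_i}$ or $c_0$ on the nose.

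For the second part, the key point is to identify, for each choice $\mathbf{i} = [i_1 < \ldots < i_\ell]$ and each of the two tensor norms $\injtens$, $\projtens$, an unconditional sequence space $\ce_{\mathbf{i}}$ controlling norms of block-diagonal sequences. The natural candidate is a Lorentz-type or mixed-norm sequence space. Concretely, write $q_m = p_{i_m}$ (with $q_m = \infty$ when the factor is $c_0$); I expect that a block-diagonal sequence $(w_j)$ in $\ell_{q_1} \injtens \ldots \injtens \ell_{q_\ell}$ satisfies $\|\sum_j w_j\| \sim \|(\|w_j\|)_j\|_r$ where $r = \max\{q_1, \ldots, q_\ell\}$ (with the $c_0$ convention $r=\infty$ giving the sup-norm), while for the projective tensor product one gets $\|\sum_j w_j\| \sim \|(\|w_j\|)_j\|_s$ with $s$ determined by $1/s = \max\{0, 1 - \sum_m (1 - 1/q_m)\}$, i.e. $s$ is the ``Hölder conjugate'' exponent coming from the chain of conjugates, clamped at $1$. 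The verification is the computational heart of the argument: a block-diagonal tensor $w_j$ lives in a product of mutually ``disjoint'' coordinate blocks across all factors, so on the one hand the unconditionality of each FDD lets one estimate $\|\sum_j w_j\|$ from above and below by comparing with a direct sum, and on the other hand one uses the known behavior of $\ell_p \injtens \ell_q$ (which contains $\ell_{\max(p,q)}$ isometrically on disjointly-supported elementary tensors) and of $\ell_p \projtens \ell_q$ (whose norm on such elements is governed by the conjugate relation), iterated over the $\ell$ factors. In all cases $\ce_{\mathbf{i}}$ is an $\ell_r$ or $c_0$, which is $1$-unconditional, so the hypothesis of Theorem \ref{t:ce est} is met.

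The main obstacle I anticipate is the block-diagonal norm estimate \eqref{eq:ce est} itself, particularly for the projective tensor product and particularly when some factors are $c_0$: the projective norm is notoriously hard to compute, and one must be careful that the block structure really does decouple the factors so that the norm of $\sum_j w_j$ is comparable to an $\ell_s$-norm of the scalars $\|w_j\|$ rather than something genuinely mixed. The clean way around this is to avoid computing the projective norm directly and instead exploit duality together with the injective estimate, or to invoke the known ``$\ell_p$-structure'' results for $\ell_p \anytens \ell_q$ and propagate them through the tensor chain one factor at a time, using associativity of the tensor norms and the fact that a block-diagonal sequence in a $k$-fold tensor product restricts, on each block, to a disjointly supported elementary tensor. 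Once \eqref{eq:ce est} is in hand for all $\mathbf{i}$, Theorem \ref{t:ce est} applies verbatim and yields the subprojectivity of $X_1 \anytens \ldots \anytens X_n$, completing the proof.
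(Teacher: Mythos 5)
Your overall strategy is exactly the paper's: reduce to Theorem \ref{t:ce est} by checking that each factor is subprojective with a $1$-unconditional FDD, and then verify a two-sided $\ce$-estimate for every sub-tensor-product. The problem is that the verification you defer as ``the computational heart of the argument'' is precisely where the content of the corollary lies (the paper devotes Lemmas \ref{l:inj est} and \ref{l:proj est} to it), and the sequence spaces you guess are wrong, so the verification as you set it up cannot succeed. For the injective product you propose $\ell_r$ with $r=\max\{q_1,\dots,q_\ell\}$; but already for $\ell_2\injtens\ell_2\cong\iK(\ell_2)$ a block-diagonal sequence of operators satisfies $\|\sum_j w_j\|=\max_j\|w_j\|$, i.e.\ the $c_0$-estimate, and the $\ell_2$-estimate fails badly on $N$ equal-norm blocks (the lower bound in \eqref{eq:ce est} would force $\|\sum_j w_j\|\gtrsim N^{1/2}$). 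The correct exponent, obtained by a H\"older computation against elementary tensors of dual vectors, is $1/s=\sum_i 1/p_i-(n-1)$ when this is positive and $c_0$ otherwise (Lemma \ref{l:inj est}); your parenthetical claim that disjointly supported elementary tensors in $\ell_p\injtens\ell_q$ span $\ell_{\max(p,q)}$ is likewise false. For the projective product your formula $1/s=\max\{0,\,1-\sum_m(1-1/q_m)\}$ gives the $c_0$-estimate for $\ell_2\projtens\ell_2\cong\cs_1$, whereas block-diagonal trace-class operators obviously satisfy the $\ell_1$-estimate; the correct answer is $1/r=\sum_i 1/p_i$, clamped at $r=1$ (Lemma \ref{l:proj est}). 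In effect you have written the injective exponent in the projective slot and an incorrect $\max$ in the injective slot.

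Because the $\ce$-estimate required by Theorem \ref{t:ce est} is a two-sided equivalence, guessing the wrong $\ce$ is not a matter of losing constants: one of the two inequalities in \eqref{eq:ce est} genuinely fails, and the argument collapses at the step where you project onto the span of the block-diagonal sequence. Your instinct to handle the projective norm by duality against the injective product of the duals is the right one and is how the paper proceeds (using that block-diagonal elements pair only blockwise, so the dual estimate with exponent $1/\sigma=\sum 1/p_i'-(n-1)$ dualizes to $1/r=1-1/\sigma=\sum 1/p_i$), but you must first get the injective estimate right, and you should also note that Lemma \ref{l:inj est} as stated covers only $1<p_i<\infty$, so the endpoint cases $p_i=1$ and $c_0$ need a separate (easier) check.
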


For $n=2$, this result goes back to \cite{Sam} and \cite{Oja}
(the injective and projective cases, respectively).

Suppose a Banach space $X$ has an FDD implemented by projections $(P_n^\prime)$ --
that is, $P_n^\prime P_m^\prime = 0$ unless $n=m$,
$\sup_{N,\pm} \|\sum_{n=1}^N \pm P_n\| < \infty$, and
$\lim_N \sum_{n=1}^N P_n = I_X$ point-norm. We say that $X$ satisfies
\emph{the lower $p$-estimate} if there exists a constant $C$ so that,
for any finite sequence $\xi_j \in \ran P_j$,
$\|\sum_j \xi_j\|^p \geq C \sum_j \|\xi_j\|^p$.
The smallest $C$ for which the above inequality holds is called
\emph{the lower $p$-estimate constant}.
\emph{The upper $p$-estimate}, and \emph{the upper $p$-estimate constant},
are defined in a similar manner.
Note that, if $X$ is an unconditional sequence space, then the above definitions
coincide with the standard one (see e.g. \cite[Definition 1.f.4]{LT2}).

\begin{corollary}\label{c:p est}
Suppose the Banach spaces $X_1$ and $X_2$ have unconditional FDD, satisfy
the lower and upper $p$-estimates respectively, and both $X_1^*$ and $X_2$
are subprojective. Then $\iK(X_1,X_2)$ is subprojective.
\end{corollary}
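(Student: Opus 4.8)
The plan is to deduce Corollary \ref{c:p est} from Theorem \ref{t:ce est ops} by verifying its hypotheses, with the only real work being the identification of an unconditional sequence space $\ce$ for which $\iK(X_1,X_2)$ satisfies the $\ce$-estimate. The subprojectivity of $X_1^*$ and $X_2$ is assumed outright, and suitability of $\A = \iK$ for $(X_1,X_2)$ holds because $X_1$ has an FDD: given $T \in \iK(X_1,X_2)$, the operators $Q_N T$ (where $Q_N = \sum_{n\le N} P_n'$ on $X_2$) are finite rank and converge to $T$ in operator norm by compactness of $T$ together with uniform boundedness of the $Q_N$. Actually one also wants to truncate on the domain side, writing $Q_N^{(2)} T Q_M^{(1)}$; again compactness makes $T Q_M^{(1)} \to T$ in norm, and then $Q_N^{(2)}(TQ_M^{(1)}) \to TQ_M^{(1)}$. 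So the density of finite-rank operators is routine.

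The substantive step is to show that if $X_1$ satisfies a lower $p$-estimate and $X_2$ satisfies an upper $p$-estimate (both with respect to their FDDs), then $\iK(X_1,X_2)$ satisfies the $\ell_p$-estimate in the sense of \eqref{eq:ce ext ops}, i.e.\ $\ce = \ell_p$. Let $(w_j)$ be a finite block-diagonal sequence, $w_j = (P^{(2)}_{N_j} - P^{(2)}_{N_{j-1}}) w_j (P^{(1)}_{N_j} - P^{(1)}_{N_{j-1}})$. For the lower bound on $\|\sum_j w_j\|$: pick, for each $j$ with $w_j\ne 0$, a norm-one (approximately norm-attaining) vector $x_j$ in the range of $P^{(1)}_{N_j}-P^{(1)}_{N_{j-1}}$ with $\|w_j x_j\|$ close to $\|w_j\|$. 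For scalars $(a_j)$ with $\sum|a_j|^{p'}\le 1$ the vector $x = \sum_j a_j x_j$ lies in $X_1$ with $\|x\|^{p}\lesssim \sum|a_j|^p\|x_j\|^p$ by the upper $p$-estimate on $X_1$ — wait, one needs the \emph{upper} estimate on the domain to control $\|x\|$; this is where the hypothesis must be read carefully, and indeed $X_1^*$ having a lower $p'$-estimate is equivalent to $X_1$ having an upper $p$-estimate, so I would instead phrase the pairing through $X_1^*$: the point is that $(\sum_j w_j)x = \sum_j a_j w_j x_j$, whose blocks live in successive ranges in $X_2$, so by the \emph{lower} $p$-estimate on $X_2$ we get $\|\sum_j w_j\|\cdot\|x\| \ge \|(\sum w_j)x\| \gtrsim (\sum_j |a_j|^p \|w_j x_j\|^p)^{1/p}$. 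Hmm — but $X_2$ is assumed to have an \emph{upper} $p$-estimate, not a lower one. So the correct pairing is dual: test $\sum_j w_j$ against functionals. Choose $y_j^* \in X_2^*$ norm one in the range of $(P^{(2)}_{N_j}-P^{(2)}_{N_{j-1}})^*$ with $\langle y_j^*, w_j x_j\rangle \approx \|w_j\|$; then $y^* = \sum_j b_j y_j^*$ satisfies $\|y^*\|^{p'}\lesssim \sum|b_j|^{p'}$ because $X_2$'s upper $p$-estimate gives $X_2^*$ a lower $p'$-estimate, and $\langle y^*, (\sum w_j) x\rangle = \sum_j a_j b_j \langle y_j^*, w_j x_j\rangle$ by block-diagonality. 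Optimizing over $\sum|a_j|^{p'}\le1$ (using $X_1^*$'s lower $p'$-estimate to build $x$ from the $x_j$ with controlled norm) and $\sum|b_j|^{p'}\le 1$ recovers $\|\sum w_j\| \gtrsim (\sum_j\|w_j\|^p)^{1/p}$. For the upper bound, for any $x\in\ball(X_1)$ write $x = \sum_j x^{(j)}$ along the FDD blocks; then $(\sum w_j)x = \sum_j w_j x^{(j)}$ has successive blocks in $X_2$ of norm $\le \|w_j\|\,\|x^{(j)}\|$, so the upper $p$-estimate on $X_2$ plus Hölder plus the lower $p$-estimate on $X_1$ (bounding $\sum_j\|x^{(j)}\|^p \lesssim \|x\|^p$) gives $\|(\sum w_j)x\| \lesssim (\sum_j \|w_j\|^p)^{1/p}$, uniformly in $x$. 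Combining yields the $\ell_p$-estimate.

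With the $\ce$-estimate established for $\ce = \ell_p$ (an unconditional sequence space), all hypotheses of Theorem \ref{t:ce est ops} are met — $X_1^*$ and $X_2$ subprojective, $\A = \iK$ suitable, the $\ell_p$-estimate valid — and the conclusion that $\iK(X_1,X_2)$ is subprojective follows at once. The main obstacle, and the place demanding care rather than cleverness, is the two-sided $\ell_p$-estimate above: one has to thread the lower-on-domain / upper-on-codomain pattern through the correct duality (lower $p$ on $X_1 \leftrightarrow$ upper $p'$ on $X_1^*$, etc.) and be attentive to the fact that in \eqref{eq:ce ext ops} the displayed inequality is stated for $\iK(X_1,X_2)$ with norm $\|\cdot\|_\A$, which for the compact ideal is just the operator norm, so no genuinely ideal-norm subtleties arise. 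A secondary point worth checking is that the $x_j$ and $y_j^*$ can indeed be chosen in the ranges of the relevant block projections with the pairings nearly attaining $\|w_j\|$; this is immediate from the definition of the operator norm applied to $w_j$ together with its block-diagonal support.
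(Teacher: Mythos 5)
There is a genuine gap: the $\ell_p$-estimate you set out to prove for $\iK(X_1,X_2)$ is false under the stated hypotheses. Take $X_1 = X_2 = \ell_p$ (which satisfies both the lower and the upper $p$-estimate) and let $w_j = e_j^* \otimes e_j$ be the rank-one diagonal blocks; then $\|\sum_{j=1}^N w_j\| = 1$ while $\bigl(\sum_{j=1}^N \|w_j\|^p\bigr)^{1/p} = N^{1/p}$. The error sits in your lower bound. To turn the blockwise near-norming vectors $x_j$ and functionals $y_j^*$ into a single test pair you need \emph{upper} estimates on $\|\sum_j a_j x_j\|_{X_1}$ and $\|\sum_j b_j y_j^*\|_{X_2^*}$, i.e.\ an upper $p$-estimate on $X_1$ and an upper $p'$-estimate on $X_2^*$ (equivalently a lower $p$-estimate on $X_2$). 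Neither is assumed: $X_1$ has only a lower $p$-estimate, and the lower $p'$-estimate on $X_2^*$ that you invoke bounds $\|\sum_j b_j y_j^*\|$ from \emph{below}, which is the wrong direction for a test functional. The duality rearrangement you attempt cannot repair this, because the hypotheses (lower on the domain, upper on the codomain) are arranged precisely so that block-diagonal sums of operators are \emph{small}, not so that they accumulate in an $\ell_p$ fashion.

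The correct sequence space is $\ce = c_0$, and your own upper-bound computation essentially proves it once you drop the unnecessary H\"older step: for $\xi \in \ball(X_1)$ with blocks $\xi_k$, the upper $p$-estimate on $X_2$ gives $\|w\xi\|^p \leq C_2 \sum_k \|w_k\|^p\|\xi_k\|^p \leq \max_k\|w_k\|^p \, C_2 \sum_k\|\xi_k\|^p \leq C_1 C_2 \max_k\|w_k\|^p \,\|\xi\|^p$ by the lower $p$-estimate on $X_1$, so $\|w\| \leq (C_1C_2)^{1/p}\max_k\|w_k\|$. The matching lower bound is trivial: after renorming so the FDD constants are $1$, $\|w\| \geq \|(P_{2,n_k}-P_{2,n_{k-1}}) w (P_{1,n_k}-P_{1,n_{k-1}})\| = \|w_k\|$ for each $k$. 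This is exactly the paper's argument, and Theorem \ref{t:ce est ops} then applies with $\ce = c_0$. Your remarks on suitability of $\A = \iK$ (density of finite-rank operators via the FDD truncations) and on the subprojectivity hypotheses are fine.
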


Before proceeding, we mention several instances where the above corollary
is applicable. Note that, if $X$ has type $2$ (cotype $2$), then $X$ satisfies the upper
(resp. lower) $2$-estimate.
Indeed, suppose $X$ has type $2$, and $w_1, \ldots, w_n$ are such that $w_j = P_j w_j$
for any $j$. Then
$$
\|\sum_j w_j\| \leq C \ave_\pm \|\sum_j \pm w_j\| \leq
C T_2(X) \Big( \sum_j \|w_j\|^2 \Big)^{1/2}
$$
($T_2(X)$ is the type $2$ constant of $X$). The cotype case is handled similarly.
Thus, we can state:

\begin{corollary}\label{c:type}
Suppose the Banach spaces $X_1$ and $X_2$ have unconditional FDD,
cotype $2$ and type $2$ respectively, and both $X_1^*$ and $X_2$
are subprojective. Then $\iK(X_1,X_2)$ is subprojective.
\end{corollary}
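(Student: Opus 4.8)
The plan is to derive Corollary \ref{c:type} as an immediate consequence of Corollary \ref{c:p est}, so the only real work is to verify the hypotheses of the latter. Concretely, I would argue as follows. Corollary \ref{c:p est} requires that $X_1$ satisfies a lower $p$-estimate and $X_2$ an upper $p$-estimate (with respect to their unconditional FDDs), together with the subprojectivity of $X_1^*$ and $X_2$. The subprojectivity hypotheses are assumed outright in Corollary \ref{c:type}, so it remains to check the two $p$-estimate conditions for $p=2$.

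First I would record the standard fact, sketched already in the paragraph preceding Corollary \ref{c:type}, that a space of type $2$ with an unconditional FDD satisfies the upper $2$-estimate. The computation is the one displayed there: for $w_j = P_j^\prime w_j$, one passes to the average over signs using the unconditionality of the FDD (this costs the FDD constant), and then applies the definition of type $2$, obtaining $\|\sum_j w_j\| \le C\, T_2(X_2)\, (\sum_j \|w_j\|^2)^{1/2}$. Second, I would give the dual statement for cotype $2$: if $X_1$ has cotype $2$ and an unconditional FDD, then for $\xi_j = P_j^\prime \xi_j$ one has $(\sum_j \|\xi_j\|^2)^{1/2} \le C_2(X_1)\,\ave_\pm \|\sum_j \pm \xi_j\| \le C\, C_2(X_1)\, \|\sum_j \xi_j\|$, again using the FDD constant to remove the signs; rearranging gives the lower $2$-estimate with constant $C = (C\, C_2(X_1))^{-2}$ in the normalization of the excerpt. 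So $X_1$ satisfies the lower $2$-estimate and $X_2$ the upper $2$-estimate, which is exactly what Corollary \ref{c:p est} needs with $p=2$.

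Having verified all four hypotheses, I would simply invoke Corollary \ref{c:p est} to conclude that $\iK(X_1, X_2)$ is subprojective, completing the proof.

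There is essentially no obstacle here: the statement is a specialization of the previous corollary, and the two estimates are textbook consequences of type/cotype combined with unconditionality of the FDD. The only point requiring a moment's care is the bookkeeping of constants when transferring between the "average over signs" form of type/cotype and the "sum with fixed coefficients" form appearing in the lower/upper $p$-estimate; but since the definition of the $p$-estimate in the excerpt only asks for the existence of \emph{some} constant $C$, and the FDD constant is finite by hypothesis, this is routine and carries no real content.
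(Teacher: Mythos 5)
Your proposal is correct and follows exactly the route the paper takes: the paragraph preceding the corollary establishes that type $2$ yields the upper $2$-estimate (with the cotype case "handled similarly"), and the corollary is then an immediate application of Corollary \ref{c:p est} with $p=2$. Nothing to add.
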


This happens, for instance, if $X_1 = L_p(\mu)$ or ${\mathfrak{C}}_p$ ($1 < p \leq 2$)
and $X_2 = L_q(\mu)$ or ${\mathfrak{C}}_q$ ($2 \leq q < \infty$).
Indeed, the type and cotype of these spaces are well known
(see e.g. \cite{PX}). The Haar system provides an unconditional basis for $L_p$.
The existence of unconditional FDD of $\cs_p$ spaces is given by \cite{AL}.

\begin{proof}[Proof of Theorem \ref{t:ce est}]
We will prove the theorem by induction on $k$. Clearly, we can take $k=1$ as
the basic case. Suppose the statement of the theorem holds for a tensor product
of any $k-1$ subprojective Banach spaces
that satisfy $\ce$-estimate. We will show that the statement holds
for the tensor product of $k$ Banach spaces
$X=X_1  \anytens X_2 \anytens \ldots \anytens X_k$.

For notational convenience, let $P_{in} = \sum_{k=1}^n P^\prime_{ik}$,
and $I_i = I_{X_i}$.
% If $A$ is a projection on $Y \subset X$,
% we use the notation $A^\perp$ for $I_Y - A$.
If $A \in B(X)$ is a projection, we use the notation $A^\perp$ for $I_X - A$.
% Set $\tilde{P}_{in} = I_1 \otimes \ldots I_{i-1} \otimes P_{in}
%  \otimes I_{i+1} \otimes \ldots \otimes I_k$.
Furthermore, define the projections
$Q_n = P_{1n} \otimes P_{2n} \otimes \ldots \otimes P_{kn}$
% $ = = \tilde{P}_{1n} \tilde{P}_{2n}\ldots \tilde{P}_{kn}$
and $R_n = P_{1n}^\perp \otimes P_{2n}^\perp \ldots \otimes P_{kn}^\perp$.
% $ = \tilde{P}_{1n}^\perp \tilde{P}_{2n}^\perp \ldots\tilde{P}_{kn}^\perp$.
Renorming all $X_i$'s if necessary, we can assume that their unconditional FDD
constants equal $1$.

First show that, for any $n$, $\ran R_n^\perp$ is subprojective.
To this end, write $R_n^\perp = \sum_{i=1}^k P^{(i)}$,
where the projections $P^{(i)}$ are defined by
$$
\begin{array}{rcl}
P^{(1)}  & = &  P_{1n} \otimes I_2 \otimes \ldots \otimes I_k  ,  \\
P^{(2)}  & = &  P_{1n}^\perp \otimes P_{2n} \otimes I_3 \otimes \ldots \otimes I_k ,  \\
P^{(3)}  & = &  P_{1n}^\perp \otimes P_{2n}^\perp \otimes P_{3n} \otimes
 I_4 \otimes \ldots \otimes I_k  , \\
\ldots  &  \ldots  &   \ldots   \\
P^{(k)}  & = &  P_{1n}^\perp \otimes P_{2n}^\perp \otimes \ldots \otimes
 P_{k-1,n}^\perp \otimes P_{kn}
\end{array}
$$
(note also that $P^{(i)} P^{(j)} = 0$ unless $i = j$). Thus, there exists $i$ so that
$P^{(i)}$ is an isomorphism on a subspace $Y^\prime \subset Y$. Now observe that the range of $P^{(i)}$ is isomorphic to
a subspace of $\ell_\infty^N(X^{(i)})$, where $N = \rank P_{in}$, and
$$
X^{(i)} = X_1 \anytens X_2 \anytens \ldots \anytens X_{i-1} \anytens X_{i+1}
 \anytens \ldots \anytens X_k .
$$
By the induction hypothesis, $X^{(i)}$ is subprojective.
By Proposition \ref{p:dir_sum}, $\ran P^{(i)}$ is subprojective for every $i$, hence
so is $\ran R_n^\perp$.

Now suppose $Y$ is an infinite dimensional subspace of $X$.
We have to show that $Y$ contains a subspace $Z$, complemented in $X$.
If there exists $n \in \N$ so that $R_n^\perp|_Y$ is not strictly singular,
then, by Corollary \ref{c:complem}, $Z$ contains
a subspace complemented in $X$.

Now suppose $R_n^\perp|_Z$ is strictly singular for any $n$. It is easy to see that,
for any sequence of positive numbers $(\vr_m)$, one can find
$0 = n_0 < n_1 < n_2 < \ldots$,
and norm one elements $x_m\in Y$, so that, for any $m$,
$\|R_{n_{m-1}}^\perp x_m\| + \|x_m - Q_{n_m} x_m\| < \vr_m$.
By a small perturbation, we can assume that
$x_m = R_{n_{m-1}}^\perp Q_{n_m}x_m$. That is,
%$x_k = \Pi_{i=1}^k(P_{i,n_k} - P_{i,n_{k-1}}) x_k\Pi_{i=1}^k (P_{2,n_k} - P_{2,n_{k-1}})$.
$$
x_m \in \ran \Big( (P_{1,n_m} - P_{1,n_{m-1}})\otimes (P_{2,n_m} - P_{2,n_{m-1}})
 \otimes \ldots \otimes (P_{k,n_m} - P_{k,n_{m-1}}) \Big) .
$$
Let $E_{im} = \ran (P_{i,n_m} - P_{i,n_{m-1}})$, and
$W = \span[E_{1m} \otimes E_{2m}\otimes \ldots \otimes E_{km} :m \in \N] \subset X$.
Applying ``Tong's trick'' (see e.g. \cite[p. 20]{LT1}), and
taking the $1$-unconditionality of our FDDs into account, we see that
$$U : X \to W :
 x \mapsto \sum_m \big((P_{1,n_m} - P_{1,n_{m-1}})\otimes \ldots \otimes (P_{k,n_m} - P_{k,n_{m-1}})\big) x$$
defines a contractive projection onto $W$. Furthermore, $Z = \span[x_m : m \in \N]$
is complemented in $W$. Indeed, the projection $P_{i,n_m} - P_{i,n_{m-1}}$ $( i,m  \in \N)$ is contractive, hence we can identify
$E_{1m} \anytens \ldots \anytens E_{km}$ with $(E_{1m} \otimes \ldots \otimes E_{km}) \cap X$.
By the by Hahn-Banach Theorem, for each $m$ there exists a contractive projection
$U_m$ on $E_{1m} \anytens \ldots \anytens E_{2m}$, with range $\span[x_m]$.
By our assumption, there exists an unconditional sequence space $\ce$ so that
$X_1 \anytens \ldots \anytens X_k$ satisfies the $\ce$-estimate.
Then, for any finite sequence
$w_m \in E_{1m} \anytens\ldots \anytens E_{km}$, \eqref{eq:ce est} yields
$$
\|\sum_k U_k w_k\| \leq C \|(\|U_k w_k\|)\|_\ce \leq C \|(\|w_k\|)\|_\ce \leq
C^2 \|\sum_k U_k w_k\| .
$$
Thus, $Z$ is complemented in $X$.
\end{proof}

\begin{proof}[Sketch of the proof of Theorem \ref{t:ce est ops}]
% We proceed as in the the proof of Theorem \ref{t:ce est}.
On $\A(X_1,X_2)$ we define the projection
$R_n : \A(X_1,X_2) \to \A(X_1,X_2) : w \mapsto P_{2n}^\perp w P_{1n}$.
Then the range of $R_n^\perp$ is isomorphic to
$X_1^* \oplus \ldots \oplus X_1^* \oplus X_2 \oplus \ldots \oplus X_2$.
Then proceed as in the the proof of Theorem \ref{t:ce est}
(with $k=2$).
\end{proof}

To prove Corollary \ref{c:l_p l_q}, we need two auxiliary results.

\begin{lemma}\label{l:inj est}
Suppose $1 < p_i < \infty$ $(1\le i \le n)$ and $X=\injtens_{i=1}^n \ell_{p_i}$. 
\begin{enumerate}
\item
If $\sum1/p_i>n-1$, then  $X$  satisfies the $\ell_s$-estimate with $1/s = \sum{1/p_i} - (n-1)$.
\item
 If $\sum 1/p_i \leq n-1$, then $X$ satisfies the $c_0$-estimate.
\end{enumerate}
\end{lemma}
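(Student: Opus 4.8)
The plan is to work with the standard description of the injective tensor norm as a supremum over product functionals. Identifying $X = \injtens_{i=1}^n \ell_{p_i}$ with a space of $n$-dimensional scalar arrays $a = (a_{k_1\dots k_n})$ in the usual way, and using $\ell_{p_i}^* = \ell_{p_i'}$, one has $\|a\|_{\injtens} = \sup\{\,| \sum_{k_1,\dots,k_n} a_{k_1\cdots k_n}\, \xi^{(1)}_{k_1}\cdots \xi^{(n)}_{k_n}| : \xi^{(i)} \in \ball(\ell_{p_i'}),\ 1\le i\le n\,\}$. In a block-diagonal sequence $(w_j)$ associated with $0 = N_1 < N_2 < \dots$, each $w_j$ is a finitely supported array supported on $I_j\times\dots\times I_j$, where $I_j = \{N_j+1,\dots,N_{j+1}\}$ and the $I_j$ are pairwise disjoint. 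Two elementary observations drive everything: the pairing of $w_j$ with $\xi^{(1)}\otimes\dots\otimes\xi^{(n)}$ depends only on the restrictions $\xi^{(i)}|_{I_j}$, so that $|\langle w_j, \xi^{(1)}\otimes\dots\otimes\xi^{(n)}\rangle| \le \|w_j\|_{\injtens}\prod_{i=1}^n \|\xi^{(i)}|_{I_j}\|_{p_i'}$; and disjointness gives $\sum_j \|\xi^{(i)}|_{I_j}\|_{p_i'}^{p_i'} \le \|\xi^{(i)}\|_{p_i'}^{p_i'} \le 1$ for each $i$.

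For the upper estimates, fix $\xi^{(i)} \in \ball(\ell_{p_i'})$, put $a_j^{(i)} = \|\xi^{(i)}|_{I_j}\|_{p_i'}$, and observe $|\langle \sum_j w_j, \xi^{(1)}\otimes\dots\otimes\xi^{(n)}\rangle| \le \sum_j \|w_j\|_{\injtens}\prod_{i=1}^n a_j^{(i)}$, with $\|(a_j^{(i)})_j\|_{p_i'}\le 1$ for each $i$. In case (1) the relation $1/s = \sum_i 1/p_i - (n-1)$ is exactly $1/s + \sum_i 1/p_i' = 1$, and $1 < s < \infty$ because each $1/p_i < 1$; Hölder's inequality applied to the $n+1$ sequences $(\|w_j\|_{\injtens})_j$ and $(a_j^{(i)})_j$ ($1\le i\le n$) then bounds the right-hand side by $\|(\|w_j\|_{\injtens})_j\|_{\ell_s}$. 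In case (2) one has $\sum_i 1/p_i' \ge 1$, so there exist exponents $q_i \in [p_i',\infty]$ with $\sum_i 1/q_i = 1$; since $\|(a_j^{(i)})_j\|_{q_i} \le \|(a_j^{(i)})_j\|_{p_i'}\le 1$, Hölder gives $\sum_j \prod_i a_j^{(i)} \le 1$, whence the right-hand side is at most $\sup_j \|w_j\|_{\injtens}$. Taking the supremum over all test functionals yields the upper half of \eqref{eq:ce est}, with constant $1$.

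For the lower estimates, testing $\sum_j w_j$ against a product functional supported on a single box $I_m\times\dots\times I_m$ recovers $w_m$, so $\|\sum_j w_j\|_{\injtens} \ge \sup_m \|w_m\|_{\injtens}$; together with the previous paragraph this already settles case (2). For case (1), given $\vr>0$, I would choose for each $j$ unit vectors $\eta^{(i),j}\in\ball(\ell_{p_i'})$ supported on $I_j$ with $\langle w_j, \eta^{(1),j}\otimes\dots\otimes\eta^{(n),j}\rangle \ge (1-\vr)\|w_j\|_{\injtens}$; then, for any nonnegative sequence $(u_j)$ with $\sum_j u_j^{s'}\le 1$ (where $1/s + 1/s' = 1$), the functionals $\xi^{(i)} = \sum_j u_j^{s'/p_i'}\eta^{(i),j}$ satisfy $\|\xi^{(i)}\|_{p_i'}^{p_i'} = \sum_j u_j^{s'} \le 1$ by disjointness of supports, while $\prod_{i=1}^n u_j^{s'/p_i'} = u_j^{\,s'\sum_i 1/p_i'} = u_j^{\,s'(1 - 1/s)} = u_j$. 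Since in the expansion of $\langle \sum_m w_m, \xi^{(1)}\otimes\dots\otimes\xi^{(n)}\rangle$ only the ``diagonal'' terms survive (again by disjointness), this gives $\langle \sum_j w_j, \xi^{(1)}\otimes\dots\otimes\xi^{(n)}\rangle \ge (1-\vr)\sum_j u_j\|w_j\|_{\injtens}$; optimizing over $(u_j)$ via the duality of $\ell_{s'}$ and $\ell_s$ and letting $\vr\to 0$ yields $\|\sum_j w_j\|_{\injtens} \ge \|(\|w_j\|_{\injtens})_j\|_{\ell_s}$.

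It suffices to carry this out for finite block-diagonal sequences, the general case $j\in\N$ following by monotone passage to the limit; this is also the only form in which the estimate is invoked in the proof of Theorem~\ref{t:ce est}. The one step needing genuine care is the exponent bookkeeping: one must verify that $1/s = \sum_i 1/p_i - (n-1)$ is precisely the relation making Hölder balance in the upper bound and its dual balance in the lower bound, and that the sign of $\sum_i 1/p_i - (n-1)$ — equivalently, whether $\sum_i 1/p_i'$ is $\ge 1$ — is exactly what separates the $\ell_s$-regime from the $c_0$-regime. Everything else is routine computation.
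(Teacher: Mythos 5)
Your proposal is correct and follows essentially the same route as the paper's proof: the same dualization of the injective norm against product functionals, the same restriction-to-blocks and H\"older bookkeeping with $1/s+\sum_i 1/p_i'=1$ for the upper bound (the paper merely splits your single $(n{+}1)$-fold H\"older into two steps via the auxiliary exponent $r$ with $1/r=\sum_i 1/p_i'$), and the same lower-bound construction — your weights $u_j^{s'/p_i'}$ with $u_j=\|w_j\|^{s/s'}$ coincide with the paper's $\alpha_{ij}=\gamma_j^{\prod_{l\ne i}p_l'/(\sum_m\prod_{l\ne m}p_l')}$. No gaps; nothing further to add.
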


\begin{proof}
Suppose $(w_j)$ is a finite block-diagonal sequence in $X$.
We shall show that $\|\sum_j w_j\| = \|(\|w_j\|)\|_s$,
with $s$ as in the statement of the lemma.
% where $1/s = \sum 1/p_i - 1$ if $\sum 1/p_i >n- 1$,
% and $s = \infty$ if $ \sum 1/p_i \leq n-1$.
To this end, let $(U_{ij})$
be coordinate projections on $\ell_{p_i}$ for every  $1\le i \le n$, such that
$w_j = U_{1j} \otimes \ldots \otimes U_{nj} w_j$, and for each $i$,  $U_{ik} U_{im} = 0$ unless $k=m$. Letting
$p_i^\prime = p_i/(p_i-1)$, we see that
$$
\|\sum_j w_j\| = \sup_{\xi _i\in \ell_{p_i^\prime}, 
 \| \xi_i\| \leq 1} \Big| \langle \sum_j w_j, \otimes_i \xi_i \rangle \Big| .
$$
Choose $ \otimes_i \xi_i$ with $\|\xi_i\| \le 1$,  and let $\xi_{ij} = U_{ij} \xi_i$. Then
$\sum_j \|\xi_{ij}\|^{p_i^\prime} \leq 1$,
and
$$
\Big| \langle \sum_j w_j, \otimes_i \xi_i \rangle \Big| \leq
\sum_j \big| \langle w_j, \otimes_i \xi_i \rangle \big| =
\sum_j \big| \langle w_j, \otimes_i\xi_{ij} \rangle \big| \leq
\sum_j \|w_j\| \Pi_{i=1}^n \|\xi_{ij}\|.
$$
Now let $1/r = \sum1/p_i^\prime =n - \sum{1/p_i}$. By H\"older's Inequality,
$$
\Big( \sum_j \big( \prod_{i=1}^n  \|\xi_{ij}\|  \big)^r \Big)^{1/r} \leq
\prod_{i=1}^n \Big( \sum_j \|\xi_{ij}\|^{p_i^\prime} \Big)^{1/p_i^\prime} \leq 1 .
$$
If $\sum 1/p_i \leq n-1$, then $r \le 1$, hence $\sum_j \Pi_{i=1}^n\|\xi_{ij}\| \leq 1$.
Therefore, $\|\sum_j w_j\| \leq \max_j \|w_j\| = (\|w_j\|)_{c_0}$. Otherwise, $r > 1$,
and
$$
\|\sum_j w_j\| \leq \Big(\sum_j \|w_j\|^s\Big)^{1/s} \Big(\sum_j (\Pi_{i=1}^n\|\xi_{ij}\|)^r \Big)^{1/r} \leq
 \Big(\sum_j \|w_j\|^s\Big)^{1/s} = (\|w_j\|)_s,
$$
where $1/s = 1 - 1/r = \sum{1/p_i}-n+1$.

In a similar fashion, we show that $\|\sum_j w_j\| \geq (\|w_j\|)_s$.
For $s = \infty$, the inequality $\|\sum_j w_j\| \geq \max_j \|w_j\|$ is trivial.
If $s$ is finite, % (that is, $r > 1$),
assume $\sum_j \|w_j\|^s = 1$ (we are allowed to do so by scaling).
Find norm one vectors $\xi_{ij} \in \ell_{p_i^\prime}$
 so that $\xi_{ij} = U_{ij} \xi_i$, and
$\|w_j\| = \langle w_j, \otimes_i\xi_{ij}  \rangle$. Let $\gamma_j = \|w_j\|^{s/r}$.
Then $\sum_j \gamma_j^r = 1 = \sum_j \gamma_j \|w_j\|$. Further,
set $\alpha_{ij} = \gamma_j^{\Pi_{l\ne i} p_l^\prime/(\sum_{m=1}^n \Pi_{l \ne m} p_l^\prime )}$. An elementary calculation shows that
$\gamma_j = \Pi_{i=1}^n \alpha_{ij} $, and $\sum_j \alpha_{ij}^{p_i^\prime} = 1$.
Let $\xi_i = \sum_j \alpha_{ij} \xi_{ij}$. Then
$\|\xi\|_{p^\prime} = $, and therefore,
$$
\|\sum_j w_j\| \geq \langle \sum_j w_j, \otimes_i \xi_i \rangle =
 \sum_j\Pi_{i=1}^n \alpha_{ij} \langle w_j, \otimes_i \xi_{ij} \rangle =
 \sum_j \gamma_j \|w_j\| = 1 .
$$
This establishes the desired lower estimate.
\end{proof}

\begin{lemma}\label{l:proj est}
For $1 \leq p_i \leq \infty$,
$X=\ell_{p_1} \projtens \ell_{p_2} \projtens \ldots \projtens \ell_{p_n}$ satisfies the
$\ell_r$-estimate, where $1/r =\sum 1/p_i$ if $\sum 1/p_i<1$, and $r = 1$
otherwise. Here, we interpret $\ell_\infty$ as $c_0$.
\end{lemma}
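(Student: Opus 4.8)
The plan is to mimic the proof of Lemma \ref{l:inj est}, exploiting the duality between the projective tensor product and spaces of multilinear forms, but the computation is in fact simpler because the projective norm is itself an infimum of sums of products of norms. Concretely, given a finite block-diagonal sequence $(w_j)$ in $X = \ell_{p_1} \projtens \ldots \projtens \ell_{p_n}$, with $w_j$ supported on $E_{1j} \otimes \ldots \otimes E_{nj}$ where the $E_{ij}$ are mutually disjoint coordinate blocks of $\ell_{p_i}$, I want to show $\|\sum_j w_j\|_\wedge \sim \|(\|w_j\|)_j\|_r$ with $1/r = \min\{1, \sum 1/p_i\}$.

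For the lower estimate $\|\sum_j w_j\|_\wedge \geq c \|(\|w_j\|)_j\|_r$, I would test against an elementary tensor $\otimes_i \xi_i$ of functionals: since $\ell_{p_i}^* = \ell_{p_i'}$ (with $\ell_\infty$ read as $c_0$, so $p_i' $ may be $1$), the block structure means that for $\eta_i = U_{ij}\xi_i$ one has $\langle \sum_j w_j, \otimes_i \xi_i\rangle = \sum_j \langle w_j, \otimes_i \eta_i\rangle$, and by choosing $\xi_i$ supported appropriately on each block and applying Hölder with exponents $(p_i')_i$ one extracts $\sum_j \prod_i \|U_{ij}\xi_i\| \leq \prod_i \|\xi_i\|$ with the relevant aggregate exponent being $r' $ determined by $1/r' = \sum 1/p_i'= n - \sum 1/p_i$; selecting near-optimal functionals for each $w_j$ and distributing weights $\alpha_{ij}$ across the coordinates (exactly as in the lower-estimate half of Lemma \ref{l:inj est}) gives the claim, with $r$ the conjugate index when $\sum 1/p_i < 1$, and $r = 1$ (so $\ell_r = \ell_1$) when $\sum 1/p_i \geq 1$. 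For the upper estimate $\|\sum_j w_j\|_\wedge \leq C\|(\|w_j\|)_j\|_r$, I would use the definition of the projective norm directly: write each $w_j$ as a near-optimal sum of elementary tensors realizing $\|w_j\|_\wedge$ (up to $1+\varepsilon$), and combine these representations to produce a representation of $\sum_j w_j$; the block-diagonality keeps the pieces from overlapping so no cancellation helps the adversary, and a Hölder-type aggregation of the product norms across $j$ yields the bound with the same $r$. One then takes $C$ to absorb the $1+\varepsilon$ and any constant from Hölder.

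An alternative, perhaps cleaner, route is to invoke the known identification $\ell_{p_1}\projtens\cdots\projtens\ell_{p_n}$ restricted to ``diagonal'' blocks: the restriction of the projective tensor norm to the subspace spanned by $E_{1j}\otimes\cdots\otimes E_{nj}$, $j\in\N$, is (up to constants) the $\ell_r$-sum of the pieces, because projecting onto each block is norm-one (the coordinate projections $U_{ij}$ are contractive and their tensor product is contractive on the projective tensor product) and because finite-dimensional projective tensor products of $\ell_p^{k}$-type spaces have well-understood norms. Using Tong's trick as in the main proofs, the diagonal blocks are $1$-complemented, reducing to a purely finite-dimensional/sequential computation identical to the one above. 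I would present the direct computation since it parallels Lemma \ref{l:inj est} and keeps the paper self-contained.

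The main obstacle I anticipate is bookkeeping in the regime $\sum 1/p_i \geq 1$, where $r = 1$: here one must check that the $\ell_1$-estimate genuinely holds in both directions, i.e. that $\|\sum_j w_j\|_\wedge$ is comparable to $\sum_j \|w_j\|_\wedge$ and not to something smaller. The upper bound $\|\sum_j w_j\|_\wedge \leq \sum_j \|w_j\|_\wedge$ is just the triangle inequality, so the content is the matching lower bound $\sum_j \|w_j\|_\wedge \leq C\|\sum_j w_j\|_\wedge$; this is where the disjointness of the supports (via the contractive coordinate projections and Tong's trick) is essential, since it lets one recover each $\|w_j\|_\wedge$ from $\|\sum_j w_j\|_\wedge$ by projecting. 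A secondary nuisance is the edge cases $p_i = 1$ and $p_i = \infty$ (the latter interpreted as $c_0$), which force $p_i' \in \{1,\infty\}$ and require care that the duality $\ell_{p_i}^* = \ell_{p_i'}$ and the Hölder step still apply verbatim; these are handled by the standard conventions and do not affect the structure of the argument.
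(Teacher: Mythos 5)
Your primary route has a genuine gap, and it sits in the lower estimate. The lower‑estimate half of Lemma \ref{l:inj est} works because the \emph{injective} norm of each block $w_j$ is, by definition, (approximately) attained on an elementary tensor $\otimes_i \xi_{ij}$ of functionals; for the projective norm this is false. Testing $\sum_j w_j$ against elementary tensors $\otimes_i\xi_i$ and ``selecting near-optimal functionals for each $w_j$'' can only recover the injective norms of the blocks, so the computation you describe yields a lower bound in terms of $\|w_j\|_{\injtens}$ rather than the required $\|w_j\|_{\projtens}$. The case $\ell_2\projtens\ell_2=\cs_1$ makes this concrete: for a block-diagonal trace-class operator, elementary tensors of functionals only detect $\max_j$ of the operator norms, and to extract $\sum_j\|w_j\|_{\cs_1}$ one must pair with a block-diagonal contraction in $B(\ell_2)$, i.e.\ a genuinely non-elementary element of the dual. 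Your exponent bookkeeping betrays the same issue: from $1/r'=\sum 1/p_i'=n-\sum 1/p_i$ you get $1/r=1-n+\sum 1/p_i=\sum 1/p_i-(n-1)$, which is exactly the exponent Lemma \ref{l:inj est} assigns to the \emph{injective} product, not $\sum 1/p_i$. The upper estimate as described is also incomplete: concatenating near-optimal representations of the $w_j$ gives only the triangle inequality $\|\sum_j w_j\|\leq\sum_j\|w_j\|$, which for $\sum 1/p_i<1$ is strictly weaker than the claimed $\ell_r$-bound with $r>1$, and no ``H\"older-type aggregation'' of a concatenated representation improves it (one would need, e.g., a sign-averaging trick). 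Likewise, in the $r=1$ regime the contractive block projections give only $\max_j\|w_j\|\leq\|\sum_j w_j\|$, not $\sum_j\|w_j\|\leq C\|\sum_j w_j\|$, so the obstacle you flag is not resolved by projecting.

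The repair is precisely your ``alternative route,'' which is the paper's proof, and you should commit to it rather than to the direct computation. Since the $\ell_{p_i}$ admit contractive finite-rank projections converging to the identity, the finite-dimensional duality between projective and injective tensor products shows that $\ell_{p_1'}\injtens\cdots\injtens\ell_{p_n'}$ is a norming subspace for $X$. Given a norming functional $x$, compress it to the block diagonal by Tong's trick (the block compression $U_0$ is contractive on the injective product and does not change the pairing with $\sum_j w_j$), write $U_0x=\sum_j x_j$, and apply Lemma \ref{l:inj est} \emph{on the dual side} to get $\|(\|x_j\|)\|_s\leq 1$ with $1/s=\sum 1/p_i'-(n-1)=1-\sum 1/p_i$ (or $s=\infty$). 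Then $\langle\sum_j w_j,x\rangle=\sum_j\langle w_j,x_j\rangle$, and taking the supremum over such block-diagonal functionals yields $\|(\|w_j\|)\|_r$ with $r=s'$ by H\"older duality. This gives both inequalities simultaneously and with the correct exponent $1/r=\sum 1/p_i$, capped at $r=1$ when $\sum 1/p_i\geq 1$.
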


\begin{proof}
The spaces involved all have the Contractive Projection Property
(the identity can be approximated by contractive finite rank projections).
Thus, the duality between injective and projective tensor products
of finite dimensional spaces (see e.g. \cite[Section 1.2.1]{DFS})
shows that, for $w \in X$,
$$
\|w\| = \sup \big\{ | \langle x, w \rangle | :
 x \in \ell_{p_1^\prime} \injtens \ldots \injtens \ell_{p_n^\prime} ,
\|x\| \leq 1 \big\}
$$
(here, as before, $1/p_i^\prime + 1/p_i = 1$).
Abusing the notation somewhat, we denote by $P_{im}$  the projection on the span
of the first $m$ basis vectors of both $\ell_{p_i}$ and $\ell_{p_i^\prime}$. Suppose a finite sequence $(w_k)_{k=1}^N \in X$ is block-diagonal, or more precisely,
$w_k = \big((P_{1,m_k} - P_{1,m_{k-1}}) \otimes \ldots \otimes (P_{n,m_k} - P_{n,m_{k-1}})\big) w_k$
for every $k$. Define the operator $U$ on $X$ by setting
$U x =  \sum_{k=1}^N
 \big((P_{1,m_k} - P_{1,m_{k-1}}) \otimes \ldots \otimes (P_{n,m_k} - P_{n,m_{k-1}})\big) x$.
We also use $U_0$ to denote the similarly
defined operator on $X^*$.
By ``Tong's trick'' (see e.g. \cite[p. 20]{LT1}), since $X$ and $X^*$ has an unconditional basis, $U$ ($U_0$) is a contractive projection onto its range $W$ ($W_0$).
Then
$$ \eqalign{
\|\sum_k w_k\|
&
=
\sup \big\{ |\langle \sum_k w_k, x \rangle | :
 \|x\|_{X^*} \leq 1 \big\}
\cr
&
=
\sup \big\{ |\langle U(\sum_k w_k), x \rangle | :
 \|x\|_{X^*} \leq 1 \big\}
\cr
&
=
\sup \big\{ |\langle \sum_k w_k, U_0 x \rangle | :
 \|x\|_{X^*} \leq 1 \big\} .
}  $$
Write $U_0 x = \sum_{k=1}^N x_k$.  By Lemma~\ref{l:inj est} there is an $s$
(either $1/s = \sum{1/p_i^\prime} - (n-1)=1-\sum 1/p_i$ or $s=\infty$)  $\|(\|x_k\|)\|_s =
\|U_0 x\| \leq \|x\| \leq 1$. Moreover,
$$
\langle \sum_k w_k, U_0 x \rangle =
\langle \sum_k w_k, \sum_k x_k \rangle =
\sum_k \langle w_k, x_k \rangle ,
$$
and therefore,
$$
\|\sum_k w_k\| = \sup \big\{ \sum_k |\langle w_k, x_k \rangle| : \|(x_k\|)\|_s \leq 1 \big\} =
\|(\|w_k\|)\|_r .
$$
\end{proof}

\begin{proof}[Proof of Corollary \ref{c:l_p l_q}]
Combine Theorem \ref{t:ce est} with Lemma \ref{l:inj est} and \ref{l:proj est}.
\end{proof}

\begin{proof}[Proof of Corollary \ref{c:p est}]
To apply Theorem \ref{t:ce est ops}, we have to show that
$\iK(X_1,X_2)$ satisfies the $c_0$-estimate. By renorming, we can
assume that the FDD constants of $X_1$ and $X_2$ equal $1$.
Suppose $(w_k)_{k=1}^N$ is a block-diagonal sequence, with
$w_k = (P_{2,n_k} - P_{2,n_{k-1}}) w_k (P_{1,n_k} - P_{1,n_{k-1}})$.
Let $w = \sum_k w_k$. Then
$\|w\| \geq \|(P_{2,n_k} - P_{2,n_{k-1}}) w (P_{1,n_k} - P_{1,n_{k-1}})\| = \|w_k\|$,
hence $\|w\| \geq \max_k \|w_k\|$. To prove the reverse inequality (with some constant),
pick a norm one $\xi \in X_1$, and let $\xi_k = (P_{1,n_k} - P_{1,n_{k-1}}) x$.
Then $\eta_k = w \xi_k$ satisfies $(P_{2,n_k} - P_{2,n_{k-1}}) \eta_k = \eta_k$.
Set $\eta = w \xi = \sum_k \eta_k$. Denote by $C_1$ ($C_2$) lower (upper)
$p$-estimate constants of $X_1$ (resp. $X_2$). Then
$$  \eqalign{
\|w \xi\|^p
&
=
\|\eta\|^p \leq C_1 \sum_k \|\eta_k\|^p \leq C_2 \sum_k \|w_k\|^p \|\xi_k\|^p
\cr
&
\leq
\max_k \|w_k\|_p C_2 \sum_k \|\xi_k\|^p \leq \max_k \|w_k\|^p C_2 C_1 \| \sum_k \xi_k\|^p =
C_2 C_1 \|\xi\|^p .
}  $$
Taking the supremum over all $\xi \in \ball(X_1)$,
$\|w\| \leq (C_1 C_2)^{1/p} \max_k \|w_k\|$. 
\end{proof}

In general, a tensor product of subprojective spaces
(in fact, of Hilbert spaces) need not be subprojective.

\begin{proposition}\label{p:tens not SP}
There exists a tensor norm $\otimes_\alpha$, so that, for every Banach spaces $X$ and $Y$,
$X \otimes_\alpha Y$ is a Banach space, and $\ell_2 \otimes_\alpha \ell_2$
is not subprojective.
\end{proposition}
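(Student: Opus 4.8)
My plan is to exhibit a single tensor norm $\alpha$ for which $\ell_2 \otimes_\alpha \ell_2$ \emph{contains} an isomorphic copy of a space already known to be non-subprojective. This suffices: as noted in the proof of Proposition~\ref{p:dir_sum}, subprojectivity is inherited by subspaces, so any Banach space admitting a non-subprojective subspace is itself non-subprojective. The two standard choices are useless here, since $\ell_2 \injtens \ell_2 = \iK(\ell_2,\ell_2)$ is subprojective by Corollary~\ref{c:p est}, and $\ell_2 \projtens \ell_2 = \cs_1$ is subprojective by the Schatten-space results of Section~\ref{s:schatten} (it is enough that subprojectivity passes from $\ell_1$, which is subprojective, to $\cs_1$). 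So $\alpha$ must be genuinely non-classical, and the content of the proposition lies in choosing it. A basic constraint: $\ell_2 \otimes_\alpha \ell_2$ is separable for every crossnorm $\alpha$ (the algebraic tensor product is a countable union of continuous images of separable spaces), so the non-subprojective subspace we are hunting for must live inside a separable ambient space.

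A further structural constraint guides the search. If $\alpha$ is a uniform (reasonable) tensor norm, its restriction to $\ell_2 \otimes \ell_2$ is invariant under all maps $u \otimes v$ with $u,v$ unitary, hence is a symmetric gauge of the singular values; thus $\ell_2 \otimes_\alpha \ell_2$ is a symmetrically normed operator ideal $\mathfrak{S}_\Phi$, in which the ``row'', ``column'' and ``diagonal'' subspaces (isomorphic to $\ell_2$, $\ell_2$, and the symmetric sequence space $\Phi$) are all $1$-complemented — the row and column truncations $T \mapsto PT$, $T \mapsto TP$ by a rank-one projection $P$ are contractive by the ideal property, and the diagonal truncation is contractive as an average of conjugations by diagonal unitaries. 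So in the uniform framework the obstruction to subprojectivity must come from subspaces sitting ``diagonally across'' the matrix, and I would look for a symmetric gauge $\Phi$ outside the well-behaved Schatten/Lorentz range — of Orlicz or Macaev type — for which $\mathfrak{S}_\Phi$ has a subspace with no complemented subspace. Alternatively, dropping uniformity, one can try to build $\alpha$ directly: since $\ell_2 \otimes \ell_2$ has linear dimension $\mathfrak{c}$, equal to that of a separable non-subprojective space such as $L_1(0,1)$, one may transport a suitable crossnorm from a dense subspace of $L_1(0,1)$ onto $\ell_2 \otimes \ell_2$ and set $\alpha$ equal to (say) the injective norm on all remaining pairs, after checking that this still yields a tensor norm assigning a Banach space to every pair $(X,Y)$.

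The main obstacle is to carry out whichever construction one commits to so that two requirements hold simultaneously: $\alpha$ is a legitimate tensor norm — in particular $X \otimes_\alpha Y$ is complete for all $X,Y$, which is automatic for uniform norms but must be verified for an ad hoc one — and $\ell_2 \otimes_\alpha \ell_2$ provably contains a non-subprojective subspace. The latter is the delicate point: one must produce a concrete subspace — built, say, from a sufficiently ``spread-out'' block-diagonal sequence of operators, or from the transported copy of $L_1(0,1)$ — and verify, by an argument in the style of Proposition~\ref{p:GM} or by transferring the Kalton--Peck phenomenon of Proposition~\ref{p:3 space} through the tensor construction, that none of its infinite-dimensional subspaces is complemented in $\ell_2 \otimes_\alpha \ell_2$. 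Once such a subspace is identified, the conclusion is immediate from subspace-heredity of subprojectivity.
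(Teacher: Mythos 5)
Your overall strategy is the right one, and your structural analysis --- that for a reasonable tensor norm $\alpha$ the space $\ell_2 \otimes_\alpha \ell_2$ is a symmetrically normed operator ideal whose diagonal is a $1$-complemented copy of a symmetric sequence space, so that the whole question reduces to producing a separable symmetric sequence space that fails subprojectivity --- is exactly the reduction the paper performs. But your write-up stops precisely where the content of the proposition begins. You never exhibit the non-subprojective symmetric sequence space: you say you ``would look for'' a gauge of Orlicz or Macaev type, but you name no candidate and give no argument that any such space has a subspace with no complemented subspaces. That existence statement is the heart of the matter and is not routine. The paper settles it by taking Pelczynski's complementably universal space $U$ for unconditional bases (\cite[Proposition 2.d.10]{LT1}), which admits a symmetric basis: since $U$ contains a complemented copy of $L_p(0,1)$ for $1<p<2$ (the Haar basis being unconditional), and such $L_p$ is not subprojective, neither is $U$; renorming makes the basis $1$-symmetric, yielding the desired $\ce$. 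With $\ce$ in hand, the paper defines $\|a\|_\alpha = \sup\{\|(u\otimes v)(a)\|_{\ce(H,K)}\}$ over all contractions $u:X\to H$, $v:Y\to K$ into Hilbert spaces --- this is the extension to arbitrary pairs $(X,Y)$ that you correctly note is needed but do not supply --- and observes that on $\ell_2\otimes\ell_2$ this norm coincides with the $\ce$-ideal norm, whose diagonal is isometric to $\ce$.

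Your fallback route --- transporting a crossnorm from a dense subspace of $L_1(0,1)$ onto $\ell_2\otimes\ell_2$ via a linear bijection and patching with the injective norm on other pairs --- does not work as described. A tensor norm must satisfy the metric mapping property $\|(S\otimes T)a\|_\alpha \le \|S\|\,\|T\|\,\|a\|_\alpha$ uniformly over all spaces and operators, and a norm obtained from an arbitrary linear identification of $\ell_2\otimes\ell_2$ with a subspace of $L_1$ has no reason to respect the bilinear structure, let alone to be compatible with a separately chosen norm on the remaining pairs. So the only viable branch of your plan is the first one, and it is missing its key lemma.
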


\begin{proof}
Note first that there exists a separable symmetric sequence space $\ce$ which is not subprojective.
Indeed, let $U$ be the space with an unconditional basis which is complementably universal
for all spaces with unconditional bases, see \cite[Proposition~2.d.10]{LT1}.
As noted in \cite[Section 3.b]{LT1}, this space has
a symmetric basis (in fact, uncountably many non-equivalent symmetric bases). On the
other hand, $U$ is not subprojective, since it contains a (complemented) copy of $L_p$
for $1 < p < 2$. Renorming $U$ to make its basis $1$-symmetric, we obtain $\ce$.

Now suppose $X$ and $Y$ are Banach spaces. For $a \in X \otimes Y$, we set $\|a\|_\alpha =
\sup\{ \|(u \otimes v)(a)\|_{\ce(H,K)}\}$, where the supremum is taken
over all contractions $u : X \to H$ and $v : Y \to K$ ($H$ and $K$ are Hilbert spaces).
Clearly $\otimes_\alpha$ is a norm on $X \otimes Y$. It is easy to see that,
for any $a \in X \otimes Y$, $T_X \in B(X,X_0)$, and $T_Y \in B(Y,Y_0)$,
$\|(T_X \otimes T_Y)(a)\|_\alpha \leq \|T_X\| \|T_Y\| \|a\|_\alpha$.
% Moreover, $\|a\|_\alpha \geq \|a\|_{\vr}$, where $\vr$ denotes the injective tensor
% product. Indeed, consider a norm $1$ $x^* \in X^*$ and $y^* \in Y^*$, and
% define contraction $u : X \to \C : x \mapsto x^*(x)$ and
% $v : Y \to \C : y \mapsto y^*(y)$. For $a = \sum_k x_k \otimes y_k$,
Consequently, $\|x \otimes y\|_\alpha = \|x\| \|y\|$.
Thus, $\| \cdot \|_\alpha$ is indeed a tensor norm (in the sense of e.g. \cite[Section 12]{}).
We denote by $X \otimes_\alpha Y$ the completion of $X \otimes Y$ in this norm.

If $X$ and $Y$ are Hilbert spaces, then for $a \in X \otimes Y$ we have
$\|a\|_\alpha = \|a\|_{\ce(X^*,Y)}$. Identifying $\ell_2$ with its adjoint,
we see that $\ce$ embeds into $\ell_2 \otimes_\alpha \ell_2$ as the space
of diagonal operators. As $\ce$ is not subprojective, neither is
$\ell_2 \otimes_\alpha \ell_2$.
\end{proof}

Here is another wide class of non-subprojective spaces.

\begin{theorem}\label{t:B(X)}
Let $X$ be an infinite dimensional  Banach space. Then $B(X)$ is not subprojective. 
\end{theorem}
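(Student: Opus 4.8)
The plan is to show that $B(X)$ always contains an isomorphic copy of $\ell_\infty$, which is not subprojective, and since subprojectivity passes to subspaces, this will finish the proof. The existence of such a copy is where all the work lies.

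First I would split into two cases according to the structure of $X$. If $X$ contains an isomorphic copy of $c_0$, then in fact $X$ contains $\ell_\infty$: a separable space containing $c_0$ need not contain $\ell_\infty$, so this naive statement is false, and I must be more careful. The correct approach: I want a bounded sequence of operators $(T_n) \subset B(X)$ that is ``independent'' in the sense that the map $\alpha = (\alpha_n) \mapsto \sum_n \alpha_n T_n$ (suitably interpreted, e.g. via a pointwise-convergent series or via block structure) is an isomorphism from $\ell_\infty$ into $B(X)$. The cleanest way to produce this is to find a normalized basic sequence $(x_n)$ in $X$ and a bounded sequence of functionals $(x_n^*)$ with $x_j^*(x_i) = \delta_{ij}$ and, crucially, such that the ``diagonal'' operators $D_\alpha x = \sum_n \alpha_n x_n^*(x) x_n$ are well-defined and bounded uniformly: indeed if $(x_n)$ is an unconditional basic sequence with unconditional constant $K$ and $(x_n^*)$ are the coordinate functionals extended by Hahn-Banach, then for $\alpha \in \ell_\infty$ one must check $D_\alpha$ is bounded on all of $X$, which requires care since $(x_n^*)$ need not be a basis of $X^*$. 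A robust fix: restrict attention to $Y = \overline{\span}[x_n]$, let $i : Y \hookrightarrow X$ and pick by Hahn-Banach a norm-one projection-like map, but $Y$ need not be complemented.

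The genuinely correct device, and the one I expect the authors use, is this: by a theorem (going back to Rosenthal / rooted in the structure theory) every infinite-dimensional Banach space $X$ admits a normalized basic sequence $(x_n)$, and one considers operators that factor through a fixed copy of $c_0$ or $\ell_1$. Concretely, fix a normalized basic sequence $(x_n)$ in $X$ with coordinate functionals $(x_n^*)$, $\sup_n \|x_n^*\| = K < \infty$. For $\alpha \in \ell_\infty$ define $T_\alpha \in B(X)$ by $T_\alpha x = \sum_n \alpha_n 2^{-n} x_n^*(x) x_n$ — wait, the $2^{-n}$ weighting kills the isomorphism. Instead, to get around the unboundedness issue, choose the $x_n$ to be a block basis of a space with good unconditionality, or better: use that $X^*$ contains a normalized basic sequence $(x_n^*)$ which is weak$^*$-null, pick $x_n \in X$ with $x_n^*(x_n) = 1$, $\|x_n\| \le 2$, and $x_m^*(x_n) \to 0$; after perturbing, arrange biorthogonality on a subsequence, and arrange that $(x_n)$ is basic and $(x_n^*)$ is ``uniformly summing'' so that $\sum \alpha_n x_n^*(x) x_n$ converges for each $x$ with norm control. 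The key point I would isolate as the main obstacle: \textbf{producing a bounded family of operators parametrized by $\ell_\infty$ whose parametrization is an isomorphism}, i.e. controlling $\|\sum_n \alpha_n x_n^* \otimes x_n\|_{B(X)}$ from above by $C\|\alpha\|_\infty$ while the lower bound $\|\alpha\|_\infty \lesssim \|T_\alpha\|$ is automatic by testing on $x_m$. The upper bound is where infinite-dimensionality must be used in full: one should pass to a subsequence so that $(x_n)$ is $1$-unconditional-like with respect to the functionals, e.g. by using Mazur's technique to get $(x_n)$ and $(x_n^*)$ with $\sum_{n} |x_n^*(x)| \cdot \text{(something)}$ controlled — in practice one takes $(x_n)$ to be a seminormalized basic sequence that is also ``$c_0$-like'' on the relevant functionals by Bessaga–Pełczyński selection inside $X^*$.

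So the concrete steps in order: (1) Recall $\ell_\infty$ is not subprojective and subprojectivity passes to closed subspaces (stated in the excerpt, proof of Prop.~\ref{p:dir_sum}); hence it suffices to embed $\ell_\infty$ into $B(X)$. (2) Produce in $X$ a normalized basic sequence $(x_n)$ and in $X^*$ an associated biorthogonal sequence $(x_n^*)$, bounded, such that the operator $T_\alpha := \sum_n \alpha_n\, x_n^* \otimes x_n$ defines (via a norm-convergent-enough series) a bounded operator on $X$ for every $\alpha \in \ell_\infty$, with $\|T_\alpha\| \le C \|\alpha\|_\infty$; obtain this by passing to a subsequence using Bessaga–Pełczyński to ensure the partial sums $\sum_{n \le N} x_n^* \otimes x_n$ are uniformly bounded (a ``Tong's trick''/uniform-boundedness argument on the block structure, exactly in the spirit of the FDD arguments already in the paper). (3) Check $\alpha \mapsto T_\alpha$ is bounded below: $\|T_\alpha x_m\| = |\alpha_m| \|x_m\| = |\alpha_m|$ up to the basis constant since $x_m^*(x_m)=1$ and $x_j^*(x_m)=0$, giving $\|T_\alpha\| \ge c \|\alpha\|_\infty$. (4) Conclude $\alpha \mapsto T_\alpha$ is an isomorphic embedding of $\ell_\infty$ into $B(X)$, so $B(X)$ contains a non-subprojective subspace and is therefore not subprojective. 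The main obstacle, to reiterate, is step (2): guaranteeing the family $\{T_\alpha : \alpha \in \ell_\infty\}$ is uniformly bounded, which forces one to choose $(x_n), (x_n^*)$ carefully (a block/skipped-block selection so the ``diagonal'' operators behave like multipliers on an unconditional FDD) rather than taking an arbitrary basic sequence.
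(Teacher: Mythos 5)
There is a genuine gap, and it is exactly at the step you flagged as the main obstacle: the plan to embed $\ell_\infty$ into $B(X)$ for an \emph{arbitrary} infinite dimensional $X$ cannot be carried out. For your diagonal operators $T_\alpha = \sum_n \alpha_n\, x_n^* \otimes x_n$ to be uniformly bounded you would in particular need $T_{(1,1,1,\ldots)}$ to be bounded, and (when the series converges pointwise) that operator is a projection of $X$ onto $\overline{\span}[x_n : n \in \N]$; for the whole family $\{T_\alpha : \alpha \in \{0,1\}^{\N}\}$ to be bounded you would further need the basis of that span to act unconditionally under the extended functionals. In a hereditarily indecomposable space no infinite dimensional closed subspace is complemented, so no choice of $(x_n)$, $(x_n^*)$, and no subsequence extraction, can make this work. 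Worse, the target statement itself is false in general: for the Argyros--Haydon space every operator is a scalar plus compact, so $B(X)$ is separable and contains no copy of $\ell_\infty$ whatsoever. So step (2) is not a technical difficulty to be overcome by a Bessaga--Pe{\l}czy\'nski selection; it is an impossibility.

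The paper avoids this by arguing by contradiction and using the hypothesis before trying to produce $\ell_\infty$. Assume $B(X)$ is subprojective. Fixing a norm one $x^* \in X^*$, the map $x \mapsto x^* \otimes x$ embeds $X$ isomorphically as a closed subspace of $B(X)$, so $X$ itself is subprojective; hence (Proposition \ref{p:unc bas}) $X$ has a subspace $N$ with an unconditional basis, and by subprojectivity of $X$ one can pass to a further subspace $W$ that is \emph{complemented in $X$} by a projection $P$. Now $B(W)$ embeds into $B(X)$ via $T \mapsto JTP$, and one finds $\ell_\infty$ inside $B(W)$: directly via diagonal operators when $W$ is isomorphic to $c_0$ or $\ell_1$ (the non-reflexive case, where your construction does work because $W$ has an unconditional basis and is all of the ambient space), and by Feder's theorem in the reflexive case, since $W$ has the bounded approximation property and $B(W) \neq \iK(W)$. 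This yields $\ell_\infty \hookrightarrow B(X)$, contradicting the assumed subprojectivity. The moral is that the complemented unconditional structure your construction needs is not available in a general $X$; it has to be manufactured from the contradiction hypothesis first.
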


\begin{proof}
Suppose, for the sake of contradiction, that $B(X)$ is subprojective. Fix a norm one element
$x^* \in X^*$. For $x \in X$ define $T_x \in B(X) : y \mapsto \langle x^*, y \rangle x$.
Clearly $M = \{T_x : x \in X\}$ is a closed subspace of $B(X)$, isomorphic to $X$.
Therefore, $X$ is subprojective. By Proposition \ref{p:unc bas},
we can find  a subspace  $N \subset M$ with an unconditional basis.
We shall deduce that $B(X)$ contains a copy of $\ell_\infty$,
which is not subprojective.

If $N$ is not reflexive,  then $N$  contains either a copy of $c_0$ or a copy of $\ell_1$,
see \cite[Proposition~1.c.13]{LT1}. By \cite[Proposition 2.a.2]{LT1},
any subspace of $\ell_p$ ($c_0$) contains
a further subspace isomorphic to $\ell_p$ (resp. $c_0$) and complemented in $\ell_p$
(resp. $c_0$), hence we can pass from $N$ to a further subspace $W$, isomorphic
to $\ell_1$ or $c_0$, and complemented in $X$ by a projection $P$.
Embed $B(W)$ isomorphically into $B(X)$
by sending  $T\in B(W)$ to $PTP \in B(X)$, where $P$ is a projection from $X$ onto $W$.
It is easy to see that $B(W)$  contain subspaces isomorphic to $\ell_\infty$,
thus, $B(X)$ is not subprojective.

There is only one option left: $N$ is reflexive. Pick a subspace $W \subset N$, complemented in $X$.
It has the  Bounded Approximation Property \cite[Theorem~1.e.13]{LT1}.
As in the previous paragraph, $B(W)$ embeds isomorphically into $B(X)$.
Since $B(W) \ne \iK(W)$,   \cite[Theorem~4(1)]{Fed:80} shows that $B(W)$ contains
an isomorphic copy of $\ell_\infty$. This rules out the subprojectivity of $B(X)$.
\end{proof}

\begin{question}\label{q:L_p(X)}
Suppose $X$ is a subprojective Banach space.
(i) Is ${\mathrm{Rad}}(X)$ subprojective?
(ii) If $2 \leq p < \infty$, must $L_p(X)$ be subprojective?
\end{question}

\begin{question}\label{q:tensor}
Is a ``classical'' (injective, projective, etc.)
tensor product of subprojective spaces necessarily subprojective?
Note that the Fremlin tensor product $\otimes_{|\pi|}$
of Banach lattices (the ordered analogue of the projective product)
can destroy subprojectivity. Indeed, by \cite{BBPTT},
$L_2 \otimes_{|\pi|} L_2$ contains a copy of $L_1$.
$L_2$ is clearly subprojective, while $L_1$ is not (see e.g. \cite{Whi}).
\end{question}

\section{Spaces of continuous functions}\label{s:cont}

In this section we deal with spaces of functions on scattered spaces.
Recall that a topological space is \emph{scattered}
if every compact subset has an isolated point. It is known that a compact set is
scattered and metrizable if and only if it is countable (in this case, $C(K)$, and even its dual,
are separable). For more information, see e.g. \cite[Section 12]{FHHMPZ}.
It is well known that, if $K$ is a compact Hausdorff set, then
$C(K)$ is separable if and only if $K$ is metrizable.
% Indeed, if
% $\ball(C(K))$ has a dense subset $(f_i)$, then $K$ can be metrized via
% $d(s,t) = \sum_i 2^{-i} |f_i(s) - f_i(t)|$. Conversely, suppose $K$ is compact
% and metrizable.
% If $K$ is countable, then, as noted above, it is scattered, hence
% $C(K)$ is separable (and so is $C(K)^*$).

If $K$ is countable, then % , by \cite[Section 12]{FHHMPZ},
$C(K)$ is $c_0$-saturated \cite[Section 12]{FHHMPZ}, and the copies of $c_0$
are complemented, by Sobczyk's Theorem.
Otherwise, by Milutin's Theorem
(see e.g. \cite[III.D.19]{Woj}, $C(K)$ is isomorphic to $C([0,1])$.
Thus, a separable space $C(K)$ is subprojective if and only if  $K$ is scattered.

Furthermore (see e.g. \cite{PS}), it is known that $K$ is scattered if and only if it
supports no non-zero atomic measures. Then $C(K)^*$ is isometric to $\ell_1(K)$.
Otherwise, $C(K)^*$ contains a copy of $L_1(0,1)$. Thus, $C(K)^*$ is subprojective
if and only if $K$ is scattered.

\subsection{Tensor products of $C(K)$}\label{ss:tens C(K)}

In this subsection we study the subprojectivity of projective and injective tensor
products of $C(K)$. Our main result is:

\begin{theorem}\label{t:C(K,X)}
Suppose $K$ is a compact metrizable space, and $X$ is a Banach space.
Then the following are equivalent:
\begin{enumerate}
\item 
$K$ is scattered, and $X$ is subprojective.
\item
$C(K,X)$ is subprojective.
\end{enumerate}
\end{theorem}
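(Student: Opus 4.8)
The plan is to prove the two implications separately, with the forward direction $(2)\Rightarrow(1)$ being the easy one: subprojectivity passes to subspaces, and both $C(K)$ (via constant-valued functions composed with any functional on $X$, or more directly via a norm-one projection) and $X$ embed complementably in $C(K,X) \cong C(K)\injtens X$; since $C(K)$ subprojective already forces $K$ scattered by the discussion preceding the theorem, this direction is immediate. So the bulk of the work is $(1)\Rightarrow(2)$: assuming $K$ countable (equivalently scattered metrizable) and $X$ subprojective, show $C(K,X)$ is subprojective.

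For $(1)\Rightarrow(2)$ I would exploit the Cantor--Bendixson structure of a countable compact $K$. The derived sets $K^{(\alpha)}$ eventually terminate; I would first handle the case $K = [1,\omega^\omega)$-type spaces, or rather argue by transfinite induction on the Cantor--Bendixson rank of $K$. An alternative, cleaner route: since $K$ is countable compact metrizable, $C(K)$ has a natural ``long $c_0$-sum'' decomposition — more precisely, using the isolated points one gets $C(K,X)$ is built from copies of $X$ via $c_0$-type sums over the successive derived sets. The key structural fact is that for countable compact $K$, $C(K,X)$ embeds complementably into a space of the form $\big(\sum_n X_n\big)_{c_0}$ after an appropriate perturbation, where each $X_n$ is (a finite sum of) copies of $X$; then Proposition \ref{p:dir_sum}(b) with $\ce = c_0$ finishes the job, since $c_0$ is subprojective and each $X_n$ is. The technical heart is producing this decomposition: given a subspace $Y \subset C(K,X)$, either some restriction map to a ``finite piece'' $C(F,X)$ (with $F$ of smaller rank) is non-strictly-singular on a subspace of $Y$ — and then one uses the inductive hypothesis plus Corollary \ref{c:complem} — or else $Y$ ``escapes to infinity'', and one extracts a block-diagonal sequence supported near distinct isolated points, on which the $c_0$-estimate holds, reducing to the subprojectivity of $X$ via Proposition \ref{p:seq} (small perturbations) and a Tong's-trick projection as in the proof of Theorem \ref{t:ce est}.

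Concretely, the induction is on $\xi$ where $K^{(\xi)} = \emptyset$. Writing $L = K^{(1)}$ (the first derived set, of strictly smaller rank) and $D = K \setminus L$ (the isolated points, a countable discrete set with $D \cup L = K$), restriction gives a short exact sequence relating $C(K,X)$, $C(L,X)$, and the ``sequences vanishing on $L$'', the latter being a $c_0$-sum of copies of $X$ indexed by $D$. For a subspace $Y$: if the quotient map $Y \to C(L,X)$ is not strictly singular, pull back using the inductive hypothesis (that $C(L,X)$ is subprojective) and Corollary \ref{c:complem}; otherwise $Y$ almost lives in the $c_0$-sum part, and a small-perturbation argument (Proposition \ref{p:seq}) moves $Y$ into a genuine block subspace of $(\sum_{d \in D} X)_{c_0}$, which is subprojective by Proposition \ref{p:dir_sum}(b). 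The main obstacle I anticipate is the bookkeeping in the ``quotient is strictly singular'' case: one needs that a subspace on which $Y \to C(L,X)$ is strictly singular can be further pinched so that its image under the coordinate projections onto finitely many isolated points is small, uniformly enough to run the perturbation — this requires a careful choice of an increasing sequence of clopen neighborhoods of $L$ shrinking to $L$, using that $K$ is metrizable so $L$ is a $G_\delta$. This is analogous to, but more delicate than, the case analysis $Q_n|_Y$ in the proof of Proposition \ref{p:dir_sum}(b), because here the ``coordinates'' are organized by a scattered topology rather than by $\N$. I would also need to verify the $c_0$-estimate for block-diagonal sequences in $C(K,X)$ with disjoint supports, which is routine since $\|\sum_j w_j\|_{C(K,X)} = \max_j \|w_j\|$ when the $w_j$ have pairwise disjoint (clopen) supports.
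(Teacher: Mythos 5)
Your direction $(2)\Rightarrow(1)$ and the overall shape of $(1)\Rightarrow(2)$ (transfinite induction; dichotomy between ``some restriction to a smaller piece is an isomorphism on a subspace'' and ``the subspace escapes, so extract an almost disjoint sequence and use a $c_0$-type projection'') match the paper. But there is a genuine gap in your induction scheme: you induct on the Cantor--Bendixson rank and take the inductive step via the first derived set $L=K^{(1)}$, asserting that $L$ has strictly smaller rank. That is false at limit ranks. For example, $K=[0,\omega^\omega]$ has $K^{(\omega)}=\{\omega^\omega\}$ and $K^{(\omega+1)}=\emptyset$, so its rank is $\omega+1$; its derived set is the set of limit ordinals $\leq\omega^\omega$, which is order-isomorphic (hence homeomorphic) to $[0,\omega^\omega]$ again, since $\omega\cdot\omega^\omega=\omega^\omega$. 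So $L$ has the same rank (and represents the same ordinal) as $K$, and the induction makes no progress precisely at the first cases that are not already covered by Proposition \ref{p:dir_sum}(b). Replacing the rank by the ordinal $\lambda$ with $K\cong[0,\lambda]$ does not help as long as the decomposition is via the derived set, because $[0,\omega^\omega]'\cong[0,\omega^\omega]$.

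The paper avoids this by decomposing along proper \emph{initial segments} rather than derived sets: it writes $K\cong[0,\lambda]$, takes a minimal counterexample $\mu$ (necessarily a limit ordinal), and uses the multiplication projections $P_\nu f=f\,\one_{[0,\nu]}$ for $\nu<\mu$; every such piece $C(\nu,X)$ corresponds to a strictly smaller ordinal, so the induction hypothesis genuinely applies. In the remaining case, where $P_\nu|_Y$ is strictly singular for every $\nu<\mu$, one extracts a normalized sequence $(y_k)\subset Y$ with $y_k$ close to $z_k=(P_{\nu_k}-P_{\nu_{k-1}})y_k$; the $z_k$ are disjointly supported in a sup-norm space, hence isometrically equivalent to the $c_0$-basis, and $\span[z_k]$ is complemented by a diagonal projection built from Hahn--Banach functionals $w_k$ (note the blocks live in $C((\nu_{k-1},\nu_k],X)$, not in single copies of $X$, so the conclusion here is a complemented copy of $c_0$ rather than a reduction to $(\sum_d X)_{c_0}$; the subprojectivity of $X$ is only used through the induction hypothesis in the first case). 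If you reorganize your argument around initial segments in this way, the rest of your outline (small perturbation via Proposition \ref{p:seq}, Corollary \ref{c:complem} in the non-strictly-singular case) goes through.
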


\begin{proof}
The implication $(2) \Rightarrow (1)$ is easy.
The space $C(K,X)$ contains copies of $C(K)$ and of $X$,
hence the last two spaces are subprojective. By the preceding paragraph,
$K$ must be scattered.

To prove $(1) \Rightarrow (2)$, first fix some notation.
Suppose $\lambda$ is a countable ordinal.
We consider the interval $[0,\lambda]$ with the order topology --
that is, the topology generated by the open intervals $(\alpha,\beta)$,
as well as $[0,\beta)$ and $(\alpha,\lambda]$. Abusing the notation
slightly, we write $C(\lambda,X)$ for $C([0,\lambda],X)$.

Suppose $K$ is scattered. By \cite[Chapter 8]{Se71}, $K$ is isomorphic
to $[0,\lambda]$, for some countable limit ordinal $\lambda$. Fix a
subprojective space $X$. We use induction on $\lambda$ to show that,
for any countable ordinal $\lambda$,
\begin{equation}
C(\lambda,X)  \textrm{  is subprojective}.
\label{eq:subp}
\end{equation}
By Proposition \ref{p:dir_sum}, \eqref{eq:subp} holds for $\lambda \leq
\omega$ (indeed, $c$ is isomorphic to $c_0$, hence
$c(X) = c \injtens X$ is isomorphic to $c_0(X) = c_0 \injtens X$).
Let ${\mathcal{F}}$ denote the set of all countable ordinals for
which \eqref{eq:subp} fails. If ${\mathcal{F}}$ is non-empty, then
it contains a minimal element, which we denote by $\mu$.
Note that $\mu$ is a limit ordinal. Indeed, otherwise it has an
immediate predecessor $\mu-1$. It is easy to see that
$C(\mu,X)$ is isomorphic to $C(\mu-1,X) \oplus X$, hence, by
Proposition \ref{p:dir_sum}, $C(\mu-1,X)$ is not subprojective.
Let $C_0(\mu,X) = \{f \in C(\mu,X) : \lim_{\nu \to \mu} f(\nu) = 0\}$.
Clearly $C(\mu,X)$ is isomorphic to $C_0(\mu,X) \oplus X$, hence
we obtain the desired contradiction by showing that $C_0(\mu,X)$ is subprojective.

To do this, suppose $Y$ is a subspace of $C_0(\mu,X)$, so that
no subspace of $Y$ is complemented in $C_0(\mu,X)$. For $\nu < \mu$,
define the projection $P_\nu : C(\mu,X) \to C(\nu,X) : f \mapsto f \one_{[0,\nu]}$.
If, for some $\nu < \mu$ and some subspace $Z \subset Y$, $P_\nu|_Z$
is an isomorphism, then $Z$ contains a subspace complemented in $X$, by
the induction hypothesis and Corollary~\ref{c:complem}. Now suppose
$P_\nu|_Y$ is strictly singular for any $\nu$. We construct a sequence
of ``almost disjoint'' elements of $Y$. To do this, take an arbitrary
$y_1$ from the unit sphere of $Y$. Pick $\nu_1 < \mu$ so that
$\|y_1 - P_{\nu_1} y_1\| < 10^{-1}$. Now find a norm one $y_2 \in Y$
so that $\|P_{\nu_1} y_2\| < 10^{-2}/2$. Proceeding further in the same
manner, we find a sequence of ordinals $0 = \nu_0 < \nu_1 < \nu_2 < \ldots$, and
a sequence of norm one elements $y_1, y_2, \ldots \in Y$, so that
$\|y_k - z_k\| < 10^{-k}$, where $z_k = (P_{\nu_k} - P_{\nu_{k-1}}) y_k$.
The sequence $(z_k)$ is equivalent to the $c_0$ basis, and the same is true
for the sequence $(y_k)$.

Moreover, $\span[z_k : k \in \N]$ is complemented in $C(\mu,X)$. Indeed,
let $\nu = \sup_k \nu_k$. We claim that $\mu = \nu$. If $\nu < \mu$,
then $P_\nu$ is an isomorphism on $\span[y_k : k \in \N]$, contradicting
our assumption. Let $W_k = (P_{\nu_k} - P_{\nu_{k-1}})(C_0(X))$, and find
a norm one linear functional $w_k$ so that $w_k(z_k) = \|z_k\|$. Define
$$
Q : C_0(\mu,X) \to C_0(\mu,X) : f \mapsto
  \sum_k w_k \big( (P_{\nu_k} - P_{\nu_{k-1}}) f \big) z_k .
$$
Note that $\lim_k \|(P_{\nu_k} - P_{\nu_{k-1}}) f\| = 0$, hence
the range of $Q$ is precisely the span of the elements $z_k$.
By Small Perturbation Principle, $Y$ contains a subspace complemented
in $C_0(\mu,X)$.
\end{proof}

The above theorem shows that $C(K) \injtens X$ is subprojective if and only if both
$C(K)$ and $X$ are. We do no know whether a similar result holds for
other tensor products. We do, however, have:

\begin{proposition}\label{p:C(K) projtens}
Suppose $K$ is a compact metrizable space, and $W$ is either $\ell_p$ ($1 \leq p < \infty$) or $c_0$.
Then $C(K) \projtens W$ is subprojective if and only if $K$ is scattered.
\end{proposition}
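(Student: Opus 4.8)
The plan is to reduce to two cases according to whether $K$ is scattered, and in the scattered case to exploit the structure of $C(K) \projtens W$ as a space of $W$-valued (or sequence-valued) functions together with the machinery already developed. If $K$ is not scattered, then by Milutin's Theorem $C(K)$ is isomorphic to $C([0,1])$, which contains an isometric copy of $C(\Delta)$; since $C(\Delta) \projtens W$ contains $C(\Delta)$ (as $C(\Delta) \projtens W$ contains $C(\Delta) \otimes w$ for a fixed norm-one $w \in W$, which is isometric to $C(\Delta)$), and $C(\Delta)$ is not subprojective, $C(K) \projtens W$ is not subprojective. This direction is routine.

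For the converse, assume $K$ is scattered, so $K$ is homeomorphic to $[0,\lambda]$ for some countable ordinal $\lambda$, and $C(K)^* \cong \ell_1(K)$ is separable. The key observation is that $C(K) \projtens W$ can be identified with a space of $W$-valued functions — more precisely, $C(K) \projtens W$ is a subspace of $C(K,W)$ when $W$ has a suitable approximation property, but here I would instead use the classical identification of $C(K) \projtens W$ with $C(K,W)$ fails in general; rather, I expect the argument to mirror the induction-on-$\lambda$ proof of Theorem~\ref{t:C(K,X)}. The plan is: renorm so that $W$ has a $1$-unconditional basis (when $W = \ell_p$ or $c_0$, it does), set up the coordinate projections $P_\nu$ on $C(\lambda) \projtens W$ induced by restriction to $[0,\nu]$, and run the same dichotomy: either some $P_\nu|_Z$ is an isomorphism for a subspace $Z$ of a given $Y$ (in which case the induction hypothesis that $C(\nu) \projtens W$ is subprojective, together with Corollary~\ref{c:complem}, finishes), or all $P_\nu|_Y$ are strictly singular, in which case one extracts an almost-disjointly-supported normalized sequence $(y_k)$ in $Y$ with $y_k$ essentially living on $[0,\nu_k] \setminus [0,\nu_{k-1}]$.

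The heart of the matter is the block-diagonal case. Here I would show that a block-diagonal sequence $(z_k)$ in $C(\lambda) \projtens W$, with $z_k \in (P_{\nu_k} - P_{\nu_{k-1}})(C(\lambda) \projtens W)$, spans a space satisfying a $c_0$-estimate — intuitively because $C(K) \cong C(K) \injtens \R$ behaves like an $\ell_\infty$-sum and the projective tensor norm of a "disjointly supported" sum in the first variable reduces to a supremum. More carefully: the blocks $(P_{\nu_k} - P_{\nu_{k-1}})C(\lambda)$ form a $1$-unconditional (indeed $c_0$-like) decomposition of $C(\lambda)$, and since $\projtens$ is injective in... — no, $\projtens$ is not injective, so I must argue directly. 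The correct statement is that $\|\sum_k z_k\|_{C(\lambda)\projtens W}$ is comparable to $\max_k \|z_k\|$, which I would derive from the fact that the natural projection $U: C(\lambda) \projtens W \to \overline{\span}[(P_{\nu_k}-P_{\nu_{k-1}})C(\lambda) \otimes W]$ is contractive (Tong's trick applied to the $1$-unconditional decomposition of $C(\lambda)$, tensored with $I_W$), combined with the elementary bound $\|\sum_k a_k \otimes w_k\|_{\projtens} \le \sum_k \|a_k\|\|w_k\|$ on one side and the $c_0$-decomposition structure of $C(\lambda)$ on the other; the blocks each look like $C(\text{finite interval plus one limit}) \projtens W$, a complemented piece. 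Once the $c_0$-estimate is in hand, Theorem~\ref{t:ce est} (with $k=1$, or rather its proof scheme) applies: one finds inside $\span[z_k]$, hence by Small Perturbation Principle inside $Y$, a subspace complemented in $C(\lambda) \projtens W$.

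**Main obstacle.** The delicate point I anticipate is proving the $c_0$-estimate for block-diagonal sequences in $C(K) \projtens W$: unlike the injective case handled in Theorem~\ref{t:C(K,X)}, the projective tensor norm does not pass to disjoint supports in an obvious way, and one must check carefully that the "block" subspaces $(P_{\nu_k}-P_{\nu_{k-1}})C(\lambda) \projtens W$ are uniformly complemented in $C(\lambda)\projtens W$ and that their $\projtens$-sum is $c_0$-like — this is where the special structure of $W$ (being $\ell_p$ or $c_0$, so that $C(\lambda) \projtens W$ has an unconditional basis, or at least that the relevant decompositions are controlled) is genuinely used, and where the argument would break for general subprojective $W$. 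Handling $W = \ell_1$ or $c_0$ versus $W = \ell_p$ ($1 < p < \infty$) may require minor separate bookkeeping, but the skeleton is the same.
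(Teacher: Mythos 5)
Your easy direction and the overall induction-on-$\lambda$ skeleton match the paper's, but there is a genuine gap at what you yourself identify as the heart of the matter, and the specific claim you make there is false. A sequence $(z_k)$ that is block-diagonal \emph{only in the $C(\lambda)$ variable} does not satisfy a $c_0$-estimate in $C(\lambda)\projtens\ell_p$ for $p<\infty$. Take $z_k=a_k\otimes e_k$ with $(a_k)$ disjointly supported norm-one elements of $C(\lambda)$ and $(e_k)$ the unit vector basis of $\ell_p$: pairing $\sum_{k=1}^n z_k$ against the operator $T\in B(\ell_p,\ell_1([0,\lambda)))=(C_0(\lambda)\projtens\ell_p)^*$ sending $e_k$ to a point mass in the support of $a_k$ (which has norm $n^{1/p'}$) shows $\|\sum_{k=1}^n z_k\|\geq n^{1/p}$, and for $p=1$ the norm is exactly $n$. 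So the projective norm of a sum that is disjoint in the first factor does \emph{not} reduce to a supremum; your intuition that $C(K)$ ``behaves like an $\ell_\infty$-sum'' through $\projtens$ is precisely where the projective norm refuses to cooperate.

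The paper's proof repairs this by blocking in \emph{both} tensor factors. The extra observation you are missing is that $(I_{C_0(\lambda)}\otimes Q_n)|_Y$ is also strictly singular (its range is a finite direct sum of copies of $C(\lambda)$, hence subprojective), so one can extract vectors with $y_i=\big((P_{\nu_i}-P_{\nu_{i-1}})\otimes(Q_{n_i}-Q_{n_{i-1}})\big)y_i$. For such doubly block-diagonal sequences the correct estimate is the $\ell_p$-estimate (not $c_0$), proved by trace duality: $P^*$ is the block projection onto block-diagonal operators in $B(\ell_p,\ell_1([0,\lambda)))$, whose blocks satisfy an $\ell_{p'}$-estimate by the injective-tensor computation of Lemma~\ref{l:inj est}, and dualizing gives $\|\sum_i x_i\|^p=\sum_i\|x_i\|^p$. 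The contractive projection onto the doubly-blocked span is then built by averaging $U_\vr\otimes V_\vr$ over signs in both factors (not by Tong's trick in one factor tensored with $I_W$, which by itself does not produce the needed complementation of the span of the $y_i$). Your closing remark that the argument ``would break for general subprojective $W$'' is fair, but as written the argument already breaks for $W=\ell_p$, $1\leq p<\infty$; only the case $W=c_0$ is consistent with a $c_0$-estimate, and even there the blocking must be done in both variables.
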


\begin{proof}
Clearly, if $K$ is not scattered, then $C(K)$ is not subprojective.
So suppose $K$ is scattered. 
We deal with the case of $W = \ell_p$, as the $c_0$ case is handled similarly.
As before, we can assume that $K = [0,\lambda]$, where $\lambda$ is a countable ordinal.
We use transfinite induction on $\lambda$. The base case is easy: if $\lambda$ is a finite
ordinal, then $C(\lambda) \projtens \ell_p = \ell_\infty^N \projtens \ell_p$ is subprojective.
Furthermore the same is true for $\lambda = \omega$ (then $C(\lambda) = c$).

Suppose, for the sake of contradiction, that $\lambda$ is the smallest countable ordinal so that
$C(\lambda) \projtens \ell_p$ is not subprojective. Reasoning as before, we conclude that
$\lambda$ is a limit ordinal. Furthermore, $C(\lambda) \sim C_0(\lambda)$, hence
$C_0(\lambda) \projtens \ell_p$ is not subprojective.

Denote by $Q_n : \ell_p \to \ell_p$ the projection on the first $n$ basis vectors in $\ell_p$,
and let $Q_n^\perp = I - Q_n$. For $f \in C_0(\lambda)$ and an ordinal $\nu < \lambda$,
define $P_\nu f = \chi_{[0,\nu]} f$, and $P_\nu^\perp = I - P_\nu$.

Suppose $X$ is a subspace of $C_0(\lambda) \projtens \ell_p$ which has no subspaces
complemented in $C_0(\lambda) \projtens \ell_p$. By the induction hypothesis,
$(P_\nu \otimes I_{\ell_p})|_Y$ is strictly singular for any $\nu < \lambda$.
Furthermore, $(I_{C_0(\lambda)} \otimes Q_n)|_Y$ must be strictly singular.
Indeed, otherwise $Y$ has a subspace $Z$ so that $(I_{C_0(\lambda)} \otimes Q_n)|_Z$
is an isomorphism, whose range is subprojective (the range of $I_{C_0(\lambda)} \otimes Q_n$
is isomorphic to the sum of $n$ copies of $C(\lambda)$, hence subprojective).
Therefore, for any $\nu < \lambda$ and $n \in \N$,
$(I - P_\nu^\perp \otimes Q_n^\perp)|_Y$ is strictly singular.
Therefore we can find a normalized basis $(x_i)$ in $Y$, and sequences
$0 = \nu_0 < \nu_1 < \ldots < \lambda$, and $0 = n_0 < n_1 < \ldots$,
so that $\|x_i - (P_{\nu_{i-1}}^\perp \otimes Q_{n_{i-1}}^\perp) x_i\| < 10^{-3i}/2$.
By passing to a further subsequence, we can assume that
$\|(P_{\nu_i} \otimes Q_{n_i})x_i\| < 10^{-3i}/2$. Thus, by  the Small Perturbation Principle,
it suffices to show the following statement:
If $(y_i)$ is a normalized sequence is $C_0(\lambda) \projtens \ell_p$,
so that there exist non-negative integers $0 = n_0 < n_1 < n_2 < \ldots$, and
ordinals $0 = \nu_0 < \nu_1 < \nu_2 < \ldots < \lambda$, with the property that
$y_i = \big((P_{\nu_i} - P_{\nu_{i-1}}) \otimes (Q_{n_i} - Q_{n_{i-1}})\big) y_i$
for any $i$, then $Y = \span[y_i : i \in \N]$ is contractively complemented in $C(K) \projtens \ell_p$.

Denote by $X$ the span of all $x$'s for which there exists an $i$ so that
$x = \big((P_{\nu_i} - P_{\nu_{i-1}}) \otimes (Q_{n_i} - Q_{n_{i-1}})\big) x$.
Then $Y$ is contractively complemented in $C(K) \projtens \ell_p$. In fact, we
can define a contractive projection onto $X$ as follows. Suppose first
$u = \sum_{j=1}^N a_j \otimes b_j$, with $b_i$'s having finite support in $\ell_p$.
Then set $P u = \sum_{i=1}^\infty \big((P_{\nu_i} - P_{\nu_{i-1}}) \otimes (Q_{n_i} - Q_{n_{i-1}})\big) u$.
Due to our assumption on the $b_i$'s, there exists $M$ so that
$P u = \sum_{i=1}^M \big((P_{\nu_i} - P_{\nu_{i-1}}) \otimes (Q_{n_i} - Q_{n_{i-1}})\big) u$.
To show that $\|Pu\| \leq \|u\|$, define, for $\vr = (\vr_i)_{i=1}^M \in \{-1,1\}^M$,
the operator of multiplication by $\sum_{i=1}M \vr_i \chi_{[\nu_{i-1}+1,\nu_i]}$
on $C_0(\lambda)$. The operator $V_\vr \in B(\ell_p)$ is defined similarly.
Bot $U_\vr$ and $V_\vr$ are contractive. Furthermore, $Pu = \ave_\vr (U_\vr \otimes V_\vr) u$.
Therefore, we can use continuity to extend $P$ to a contractive projection from
$C_0(\lambda) \projtens \ell_p$ onto $X$.

It To construct a contractive projection from $X$ onto $Y$, we need to show that
the blocks of $X$ satisfy the $\ell_p$-estimate. That is, if
$x_i = \big((P_{\nu_i} - P_{\nu_{i-1}}) \otimes (Q_{n_i} - Q_{n_{i-1}})\big) x_i$
for each $i$, then $\|\sum_i x_i\|^p = \sum_i \|x_i\|^p$. To this end, use trace duality
to identify $(C_0(\lambda) \projtens \ell_p)^*$ with $B(\ell_p, \ell_1([0,\lambda)),)$.
$P^*$ is the ``block'' projection onto the space of ``block diagonal''
operators which map the elements of $\ell_p$ supported on $(n_{i-1}, n_i]$
onto the vectors in $\ell_1$ supported on $(\nu_{i-1}, \nu_i]$. If $T_i's$ are the blocks
of such an operator, then $\|\sum_i T_i\|^{p^\prime} = \sum_i \|T_i\|^{p^\prime}$, where
$1/p + 1/p^\prime = 1$. (see the proof of Corollary \ref{c:l_p l_q}). By duality,
$\|\sum_i x_i\|^p = \sum_i \|x_i\|^p$.
\end{proof}

\begin{remark}
Suppose $K$ is a scattered metrizable space. We do not know whether $C(K) \projtens C(K)$
is necessarily subprojective. The proof above cannot be emulated directly, since $P$
may not be well-defined. More specifically, we cannot quite define $P u$ if $u = f \otimes f$,
with $f = \chi_{[0,\mu]}$, with $\sup_i \nu_i < \mu < \lambda$.
\end{remark}

\subsection{Operators on $C(K)$}\label{ss:op on C(K)}

\begin{proposition}\label{p:P pq}
Suppose $K$ is a scattered compact metrizable space, and $1 \leq p \leq q < \infty$.
Then the space $\Pi_{qp}(C(K),\ell_q)$ is subprojective.
\end{proposition}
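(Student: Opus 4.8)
The plan is to reduce the statement to a form of the $\ce$-estimate machinery of Theorem \ref{t:ce est ops}, or to argue directly in the style of the proof of Theorem \ref{t:C(K,X)}, by finding inside an arbitrary subspace of $\Pi_{qp}(C(K),\ell_q)$ an almost-block-diagonal sequence that spans a complemented subspace. The starting point is the structure of $p$-summing operators out of $C(K)$: for $K$ scattered (so $C(K)^* = \ell_1(K)$ with $K$ countable), a $(q,p)$-summing operator $T : C(K) \to \ell_q$ is represented through a factorization $C(K) \to L_q(\mu) \to \ell_q$ for a suitable measure $\mu$ on $K$, and since $C(K)$ has an unconditional ``FDD'' given by the partial-sum projections associated to a countable enumeration of $K$, the ideal $\Pi_{qp}(C(K),\ell_q)$ carries a natural block structure coming from partitioning the index set of $K$ and of $\ell_q$. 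First I would verify that finite-rank operators are dense in $\Pi_{qp}(C(K),\ell_q)$, i.e.\ that the ideal is suitable: this follows because $C(K)$ with $K$ countable has the (metric) approximation property and the $(q,p)$-summing norm is dominated by a factorization norm through an $L_q$ space, so one can truncate on both sides.

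Next I would set up the block-diagonal decomposition. Writing $K = [0,\lambda]$ for a countable ordinal $\lambda$ (as in the proof of Theorem \ref{t:C(K,X)}), let $P_\nu : C(K) \to C([0,\nu])$ be the natural norm-one projections, and let $Q_n$ be the initial-segment projections on $\ell_q$. For $T \in \Pi_{qp}(C(K),\ell_q)$ one then considers the ``corner'' maps $R_{\nu,n}(T) = Q_n^\perp T P_\nu^\perp$; these are contractive projections on the ideal, and the range of $I - R_{\nu,n}$ is isomorphic to a finite sum of copies of $C(K)$-preduals (namely $\ell_1(K)$, via the $P_\nu^\perp$ part) and of $\ell_q$ (via the $Q_n$ part). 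Crucially, $C(K)^* = \ell_1(K)$ is subprojective (it is $\ell_1$, which is subprojective), and $\ell_q$ is subprojective, so by Proposition \ref{p:dir_sum} these ``finite corner'' ranges are subprojective. Given an arbitrary subspace $Y \subset \Pi_{qp}(C(K),\ell_q)$: if some corner projection is non-strictly-singular on a subspace of $Y$, Corollary \ref{c:complem} finishes; otherwise, exactly as in Theorem \ref{t:C(K,X)}, one extracts a normalized sequence $(y_i)$ in $Y$ that is, up to small perturbation, block-diagonal for the sequence of ordinals $\nu_i$ and integers $n_i$, i.e.\ $y_i = (Q_{n_i} - Q_{n_{i-1}}) y_i (P_{\nu_i}^\perp \wedge \ldots)$ in the obvious sense.

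The remaining point is to show such a block-diagonal sequence spans a complemented subspace. For this I would establish a $\ce$-estimate for the block-diagonal sequences in $\Pi_{qp}(C(K),\ell_q)$: I expect the relevant space $\ce$ to be $\ell_q$ (each $(q,p)$-summing operator on a ``small'' $C$-space contributes its norm $q$-additively, because the target norm is an $\ell_q$-norm and the summing norm of a diagonal-type block operator is controlled by an $\ell_q$-combination of the block norms). Once the $\ce$-estimate is in hand, the Tong's-trick argument of Theorem \ref{t:ce est ops} produces a contractive ``block'' projection onto the span of the blocks, and then the subprojectivity of $\ell_q$ (applied to $J(Y')$, where $J$ identifies the block span with a subspace of $\ell_q$) produces the desired complemented subspace inside $Y$, via Proposition \ref{p:compare}.

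The main obstacle I anticipate is the $\ce$-estimate itself: unlike $\iK(X_1,X_2)$ in Corollary \ref{c:p est}, where the $c_0$-estimate comes cheaply from testing on a single vector, here one must control the $\Pi_{qp}$-norm of $\sum_i w_i$ from below \emph{and} above in terms of $(\|w_i\|_{qp})_i$, and the lower bound requires producing, for a block-diagonal operator, a Pietsch-type measure and test vectors that ``see'' all the blocks simultaneously. I would handle this by exploiting that the blocks act on disjointly supported subspaces of $C(K)$ (copies of $\ell_\infty^{m_i}$) into disjointly supported subspaces of $\ell_q$: a $(q,p)$-summing norm of a direct sum of such blocks is then computed by optimizing a Pietsch measure whose mass is split among the blocks, and the optimization is exactly an $\ell_q$-type extremal problem, yielding $\|\sum_i w_i\|_{qp}^{q} \asymp \sum_i \|w_i\|_{qp}^{q}$ (the constant absorbing $p$ versus $q$ discrepancies). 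If a clean two-sided estimate proves elusive, an alternative is to bypass Theorem \ref{t:ce est ops} and argue directly that the block span, being a $c_0$- or $\ell_q$-like sequence of disjointly-supported operators, admits an explicit averaging projection as in the $C_0(\mu,X)$ argument of Theorem \ref{t:C(K,X)}.
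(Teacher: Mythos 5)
Your proposal follows essentially the same route as the paper: reduce to finite-rank density via Pietsch factorization, exploit the block structure coming from the fact that $C(K)^*=\ell_1(K)$, and prove a two-sided $\ell_q$-estimate for block-diagonal operators by splitting and recombining Pietsch measures, which is exactly the computation the paper carries out. You also correctly identify that this estimate is the crux.

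Two points in your setup deserve tightening, and one packaging difference is worth noting. First, finite-rank density in the $\pi_{qp}$-norm does not follow from the approximation property of $C(K)$ (the AP only gives operator-norm approximation); the paper's Lemma \ref{l:P pq} instead uses that scatteredness forces the Pietsch measure $\mu$ to be purely atomic, $\mu=\sum_i\alpha_i\delta_{k_i}$, so that truncating the formal identity $j:C(K)\to L_q(\mu)$ to finitely many atoms has small norm --- this is precisely where scatteredness enters, and your argument should say so explicitly. Second, the partial-sum projections associated to an arbitrary countable enumeration of $K$ are \emph{not} bounded on $C(K)$ (only the order-interval multiplications $P_\nu$ are, which you do use later); the coordinate projections live on $C(K)^*=\ell_1(K)$. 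The paper sidesteps both your corner-projection bookkeeping and any transfinite induction by using finite-rank density to identify $\Pi_{qp}(C(K),\ell_q)$ with a tensor product $\ell_1\otimes_\alpha\ell_q$, where both factors carry genuine unconditional bases, and then invoking Theorem \ref{t:ce est} directly once the $\ell_q$-estimate for block-diagonal elements is established; your version via $R_{\nu,n}$ and induction on $\lambda$ would also work but is heavier. Neither discrepancy is fatal, since the analytic core --- the upper bound via the convex combination $\mu=(\sum_i\pi_{qp}(u_i)^q)^{-1}\sum_i\pi_{qp}(u_i)^q\mu_i$ of the blockwise Pietsch measures, and the lower bound via restricting the Pietsch measure of the sum to the disjoint supports $S_i$ --- is exactly what you sketch.
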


Recall that $\Pi_{qp}(X,Y)$ stands for the space of $(q,p)$-summing operators -- that is,
the operators for which there exists a constant $C$ so that, for any $x_1, \ldots, x_n \in X$,
$$
\Big(\sum_i \|T x_i\|^q\Big)^{1/q} \leq C \sup_{x^* \in \ball(X^*)}
 \Big( \sum_i | x^*(x_i) |^p \Big)^{1/p} .
$$
The smallest value of $C$ is denoted by $\pi_{pq}(T)$.

Note that, if a compact Hausdorff space $K$ is not scattered, then $C(K)^*$ contains $L_1$ 
\cite{PS}, hence $\Pi_{qp}(C(K),\ell_q)$ is not subprojective.

The following lemma may be interesting in its own right.

\begin{lemma}\label{l:P pq}
Suppose $X$ is a Banach space, $K$ is a compact metrizable scattered space, and
$1 \leq p \leq q < \infty$. Then, for any $T \in \Pi_{qp}(C(K),X)$, and any $\vr > 0$,
there exists a finite rank operator $S \in \Pi_{qp}(C(K),X)$ with
$\pi_{pq}(T-S) < \vr$.
\end{lemma}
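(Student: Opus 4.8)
\textbf{Plan for the proof of Lemma \ref{l:P pq}.}
The key structural fact is that a $(q,p)$-summing operator from $C(K)$ factors through a particularly simple space when $K$ is scattered, because then $C(K)^*=\ell_1(K)$ and $K$ is countable. The plan is to exploit the representation of $T\in\Pi_{qp}(C(K),X)$ via a Pietsch-type factorization: there is a regular Borel probability measure $\mu$ on $K$ so that $T$ factors (up to the relevant summing norm) through the canonical inclusion $C(K)\to L_{p,q}(\mu)$ or, more simply for our purposes, through $C(K)\to L_r(\mu)$ for a suitable $r$. Since $K$ is scattered, $\mu$ is purely atomic, say $\mu=\sum_n c_n\delta_{k_n}$ with $c_n\geq 0$, $\sum_n c_n=1$, and $(k_n)$ an enumeration of $K$. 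Then the ``evaluation at the atoms'' map $C(K)\to\ell_r$ (weighted by the $c_n^{1/r}$) captures all of the $(q,p)$-summing behaviour of $T$, in the sense that $T$ ``reads off'' only countably many coordinates with summable weights.

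First I would invoke the Pietsch factorization / domination theorem for $(q,p)$-summing operators on $C(K)$ to produce the probability measure $\mu$ and the associated factoring map; then I would use scatteredness of $K$ to write $\mu$ as a countable atomic measure and truncate it. Concretely, given $\vr>0$, choose $N$ so large that the tail $\sum_{n>N}c_n$ is small (in the appropriate power), and let $R_N\in B(C(K))$ be the finite-rank operator $f\mapsto\sum_{n\le N} f(k_n)\,\varphi_n$, where $(\varphi_n)$ is a partition-of-unity-type family of norm-one functions in $C(K)$ with $\varphi_n(k_m)=\delta_{nm}$ for $m\le N$ (such $\varphi_n$ exist since the $k_n$ are distinct points of a compact Hausdorff space; one uses Urysohn). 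Then set $S=TR_N$, a finite-rank operator, and estimate $\pi_{pq}(T-S)=\pi_{pq}(T(I-R_N))$ by bounding the $(q,p)$-summing norm of $I-R_N$ as an operator into the relevant $L_r(\mu)$-factor: since $I-R_N$ kills the first $N$ atoms, its contribution is controlled by $(\sum_{n>N}c_n)^{1/r}$ times a fixed constant coming from the factorization of $T$. Choosing $N$ large makes this $<\vr$.

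The step I expect to be the main obstacle is making the truncation estimate clean, i.e.\ controlling $\pi_{pq}(T(I-R_N))$ rather than merely the operator norm. The point is that the $(q,p)$-summing norm does not dominate, and is not dominated by, the operator norm, so one genuinely has to work inside the Pietsch factorization: one writes $T=\widehat{T}\circ j_\mu$ with $j_\mu:C(K)\to L_r(\mu)$ the canonical map and $\widehat{T}$ bounded, observes $\pi_{pq}(j_\mu)\le 1$ (Pietsch), and then argues that $j_\mu\circ(I-R_N)$, viewed through the atomic decomposition of $\mu$, has $(q,p)$-summing norm at most $(\sum_{n>N}c_n)^{1/r}$ because on the orthogonal-to-first-$N$-atoms part the measure has small total mass. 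A secondary technical nuisance is verifying that $S=TR_N$ indeed lies in $\Pi_{qp}(C(K),X)$ with finite rank and that $T-S$ has the claimed small summing norm simultaneously; both follow once the factorization bookkeeping is set up, since $R_N$ has finite rank and $\Pi_{qp}$ is an operator ideal. I would also remark that this is exactly the ingredient needed so that $\Pi_{qp}(C(K),\ell_q)$ is a ``suitable'' space in the sense required to run the block-diagonal argument of Theorem \ref{t:ce est ops} in the proof of Proposition \ref{p:P pq}.
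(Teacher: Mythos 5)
Your proposal is correct and follows essentially the same route as the paper: Pietsch factorization $T=\tilde{T}\circ j$ through $L_q(\mu)$ (resp.\ $L_{q,1}(\mu)$ after reducing $\Pi_{qp}$ to $\Pi_{q1}$ when $p<q$), pure atomicity of $\mu$ from scatteredness of $K$, and truncation to the first $N$ atoms with the tail controlled by $\big(\sum_{i>N}\alpha_i\big)^{1/q}$. The only (cosmetic) difference is that the paper performs the truncation downstream of the formal identity, via multiplication by $\chi_{\{k_1,\ldots,k_N\}}$ on $L_q(\mu)$, which yields a finite-rank contraction for free and avoids your Urysohn functions $\varphi_n$ and the attendant bound on $\|I-R_N\|$.
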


In proving Proposition \ref{p:P pq} and Lemma \ref{l:P pq}, we consider the cases of $p=q$
and $p<q$ separately. If $p=q$, we are dealing with $q$-summing operators. By Pietsch
Factorization Theorem, $T \in B(C(K),X)$ is $q$-summing if and only if  there exists a probability measure
$\mu$ on $K$ so that $T$ factors as $\tilde{T} \circ j$, where $j : C(K) \to L_q(\mu)$
is the formal identity, and $\|T\| \leq \pi_q(T)$. Moreover, $\mu$ and $\tilde{T}$
can be selected in such a way that $\|\tilde{T}\| = \pi_q(T)$. As $K$ is scattered, there exist
distinct points $k_1, k_2, \ldots \in K$, and non-negative scalars $\alpha_1, \alpha_2, \ldots$,
so that $\sum_i \alpha_i = 1$, and $\mu= \sum_i \alpha_i \delta_{k_i}$.

Now suppose $T \in B(C(K),X)$ satisfies $\pi_q(T) = 1$. Keeping the above notation, find
$N \in \N$ so that $(\sum_{i=N+1}^\infty \alpha_i)^{\frac{1}{q}} < \vr$. Denote by $u$ and $v$ the
operators of multiplication by $\chi_{\{k_1, \ldots, k_N\}}$ and
$\chi_{\{k_{N+1}, k_{N+2}, \ldots\}}$, respectively, acting on $L_q(\mu)$. It is easy
to see that $\rank u \leq N$, and $\|v j\| < \vr$. Then $S = \tilde{T} u j$
works in Lemma \ref{l:P pq}.

If $1 \leq p < q$, then (see e.g. \cite[Chapter 10]{DJT} or \cite[Chaper 21]{TJ}),
$\Pi_{qp}(C(K),X) = \Pi_{q1}(C(K),X)$, with equivalent norms. Henceforth, we set $p=1$.
We have a probability measure $\mu$ on $K$, and a factorization $T = \tilde{T} j$, where
$j : C(K) \to L_{q1}(\mu)$ is the formal identity, and $\tilde{T} : L_{q1}(\mu) \to X$
satisfies $\|\tilde{T}\| \leq c \pi_{q1}(T)$ ($c$ is a constant depending on $q$).
%Here, $L_{q1}$ is the Lorentz space, equipped with the norm
%$\|f\|_{q1} = \Big( \int_0^1 t^{1/q-1} f^*(t) \, dt$.
%\query{Def may be redundant}
%Conversely, if $T$ has such a factorization, then it is $(q,1)$-summing, with
%$\pi_{q1}(T) \leq c_1 \|\tilde{T}\|$.

In this case, the proof of Lemma \ref{l:P pq} proceeds as for $q$-summing operators, except that
now, we need to select $N$ so that $c \big(\sum_{i=N+1}^\infty \alpha_i\big)^{1/q} < \vr$.

\begin{proof}[Proof of Proposition \ref{p:P pq}]
It is well known that, for any $T$, $\pi_{qp}(T) = \pi_{qp}(T^{**})$.
Moreover, by Lemma \ref{l:P pq}, any $(q,p)$-summing operator on $C(K)$ can be
approximated by a finite rank operator. Then we can identify $\Pi_{qp}(C(K),X)$
with the completion of the algebraic tensor product $C(K)^* \otimes X$ in the
appropriate tensor norm which we denote by $\alpha$. Recalling that $C(K)^* = \ell_1$
(the canonical basis in $\ell_1$ corresponds to the point evaluation functionals),
we can describe $\alpha$ in more detail: for $u = \sum_i a_i \otimes x_i \in \ell_1 \otimes X$,
$\|u\|_\alpha = \pi_{qp}(\overline{u})$, where $\overline{u} : \ell_\infty \to X$ is defined by
$\overline{u} b = \sum_i b(a_i) x_i$. Furthermore, by the injectivity of the ideal $\Pi_{qp}$,
$\pi_{qp}(\overline{u}) = \pi_{qp}(\kappa_X \circ \overline{u})$, where $\kappa_X : X \to X^{**}$
is the canonical embedding. Finally, $\kappa_X \circ \overline{u} = \tilde{u}^{**}$, with
$\tilde{u} : c_0 \to X$ defined via $\tilde{u} b = \sum_i b(a_i) x_i$.

To finish the proof, we need to show (in light of Theorem \ref{t:ce est}) that
$\ell_1 \otimes_\alpha \ell_q$ satisfies the $\ell_q$ estimate. To this end, suppose
we have a block-diagonal sequence $(u_i)_{i=1}^n$, and show that $\| \sum_i u_i \|_\alpha^q \sim
\sum_i \| u_i \|_\alpha^q$. Abusing the notation slightly, we identify $u_i$
with an operator from $\ell_\infty^N$ to $\ell_q^N$ (where $N$ is large enough),
and identify $\| \cdot \|_\alpha$ with $\pi_{qp}(\cdot)$.

First show that $\| \sum_i u_i \|_\alpha^q \leq c^q \sum_i \| u_i \|_\alpha^q$,
where $c$ is a constant (depending on $q$). We have disjoint sets $(S_i)_{i=1}^n$
in $\{1, \ldots, N\}$ so that $u_i e_j = 0$ for $j \notin S_i$. Therefore there
exists a probability measure $\mu_i$, supported on $S_i$,
so that
$$
\|u_i f\|^q \leq c_1^q \pi_{qp}(u_i)^q \|f\|_\infty^{q-p} \|f\|_{L_p(\mu_i)}^p
$$
for any $f \in \ell_\infty^N$ ($c_1$ is a constant). Now define the probability measure
$\mu$ on $\{1, \ldots, N\}$:
$$
\mu = \big( \sum_i \pi_{qp}(u_i)^q \big)^{-1} \sum_i \pi_{qp}(u_i)^q \mu_i .
$$
For $f \in \ell_\infty^N$, set $f_i = f \chi_{S_i}$. Then the vectors $u_i f_i$ are
disjointly supported in $\ell_q$, and therefore,
$$
\|(\sum_i u_i)f\|^q = \sum_i \|u_i f_i\|^q \leq
c_1^q \sum_i \pi_{qp}(u_i)^q \|f_i\|_\infty^{q-p} \|f_i\|_{L_p(\mu_i)}^p \leq
c_1^q \|f\|_\infty^{q-p} \sum_i \pi_{qp}(u_i)^q \|f_i\|_{L_p(\mu_i)}^p .
$$
An easy calculation shows that
$$
\|f_i\|_{L_p(\mu_i)}^p = \big( \sum_i \pi_{qp}(u_i)^q \big)^{-1} \sum_i \pi_{qp}(u_i)^q \|f_i\|_{L_p(\mu)}^p ,
$$
hence
$$
\|(\sum_i u_i)f\|^q \leq
c_1^q \big( \sum_i \pi_{qp}(u_i)^q \big) \|f\|_\infty^{q-p} \sum_i \|f_i\|_{L_p(\mu_i)}^p =
c_1^q \big( \sum_i \pi_{qp}(u_i)^q \big) \|f\|_\infty^{q-p} \|f\|_{L_p(\mu)}^p .
$$
Therefore, $\pi_{qp}(\sum_i u_i) \leq c \big( \sum_i \pi_{qp}(u_i)^q \big)^{1/q}$,
for some universal constant $c$.

Next show that $\| \sum_i u_i \|_\alpha^q \geq c^{\prime q} \sum_i \| u_i \|_\alpha^q$,
where $c^\prime$ is a constant. There exists a probability measure $\mu$ on $\{1, \ldots, N\}$
so that, for any $f \in \ell_\infty^N$,
$$
\|(\sum_u u_i f)\|^q \geq c_2^q \pi_{qp}(\sum_i u_i)^q \|f\|_\infty^{q-p} \|f\|_{L_p(\mu)}^p
$$
For each $i$ let $\alpha_i = \|\mu|_{S_i}\|_{\ell_1^N}$, and $\mu_i = \mu_i/\alpha_i$
(if $\alpha_i = 0$, then clearly $u_i = 0$). Then for any $i$, and any $f \in \ell_\infty^N$,
$$
\|u_i f\|^q = \|(\sum_i u_i)(\chi_{S_i} f)\|^q
\leq c_2^q \pi_{qp}(\sum_i u_i)^q \alpha_i \|f\|_\infty^{q-p} \|f\|_{L_p(\mu_i)}^p ,
$$
hence $\pi_{qp}(u_i) \leq c^\prime \alpha_i^{1/q} \pi_{qp}(\sum_i u_i)$ ($c^\prime$ is a constant).
As $\sum_i \alpha_i = 1$, we conclude that
$\sum_i \pi_{qp}(u_i)^q \leq c^{\prime q} \pi_{qp}(\sum_i u_i)$.
\end{proof}

\subsection{Continuous fields}\label{ss:fields}
We refer the reader to \cite[Chapter 10]{Dix} for an introduction into
continuous fields of Banach spaces. To set the stage,
suppose $K$ is a locally compact Hausdorff space (the \emph{base space}), and
 $(X_t)_{t \in K}$ is a family of Banach spaces (the spaces $X_t$ are called
(\emph{fibers}).
A \emph{vector field} is
an element of $\prod_{t \in K} X_t$.  A linear subspace $X$ of $\prod_{t \in K} X_t$ is called
a \emph{continuous field} if the following conditions hold:
\begin{enumerate}
\item 
For any $t \in K$, the set $\{x(t) : x \in X\}$ is dense in $X_t$.
\item
For any $x \in X$, the map $t \mapsto \|x(t)\|$ is continuous, and vanishes at infinity.
\item
Suppose $x$ is a vector field so that, for any $\vr > 0$ and any $t \in K$,
there exist an open neighborhood $U \ni t$ and $y \in X$ for which $\|x(s) - y(s)\| < \vr$
for any $s \in U$. Then $x \in X$.
\end{enumerate}
Equipping $X$ with the norm $\|x\| = \max_t \|x(t)\|$, we turn it into a Banach space.

In a fashion similar to Theorem \ref{t:C(K,X)}, we prove:

\begin{proposition}\label{p:field}
% \query{weaken conditions for $K$?}
Suppose $K$ is a scattered metrizable space, $X$ is a separable continuous
vector filed on $K$, so that, for every $t \in K$, the fiber $X_t$ is subprojective.
Then $X$ is subprojective.
\end{proposition}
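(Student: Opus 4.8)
The plan is to mimic the structure of the proof of Theorem \ref{t:C(K,X)}, replacing the ``coordinate cut-off'' projections $P_\nu$ by restrictions associated to open-closed subsets of the scattered base space $K$, and using the fiberwise subprojectivity in place of the subprojectivity of $X$. First I would reduce to the case where $K = [0,\lambda]$ for a countable ordinal $\lambda$: since $K$ is scattered, metrizable and (for $X$ separable) necessarily countable and compact, by \cite[Chapter 8]{Se71} it is homeomorphic to a countable ordinal interval, so write $X = C_0([0,\lambda], (X_t))$ in the obvious sense. For $\nu < \lambda$ define $P_\nu : X \to X$ by $(P_\nu x)(t) = x(t)$ for $t \leq \nu$ and $0$ otherwise; condition (3) in the definition of a continuous field guarantees that $P_\nu x$ is again a section, and $\|P_\nu\| = 1$. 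The range of $P_\nu$ is a continuous field over the compact scattered space $[0,\nu]$, which is ``smaller'' than $\lambda$, so it will be amenable to a transfinite induction on $\lambda$.

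Next I would run the induction. The base case $\lambda < \omega$ is just a finite direct sum $X_{t_1} \oplus \dots \oplus X_{t_N}$ of subprojective fibers, hence subprojective by Proposition \ref{p:dir_sum}; the case $\lambda = \omega$ follows because such a field is a $c_0$-sum $(\sum_n X_{t_n})_{c_0}$, again subprojective by Proposition \ref{p:dir_sum}(b). For the inductive step, suppose $\mu$ is the least ordinal for which the statement fails. As in Theorem \ref{t:C(K,X)}, $\mu$ must be a limit ordinal (otherwise $X \cong C_0([0,\mu{-}1],(X_t)) \oplus X_{\mu}$, contradicting minimality via Proposition \ref{p:dir_sum}), and we may pass to $C_0$, i.e. sections vanishing at $\mu$. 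Given a subspace $Y$ with no subspace complemented in $X$: if $P_\nu|_Z$ is an isomorphism for some $\nu < \mu$ and some $Z \subset Y$, then by the induction hypothesis applied to $\ran P_\nu$ and Corollary \ref{c:complem}, $Z$ (hence $Y$) contains a subspace complemented in $\ran P_\nu$, and since $P_\nu$ is a norm-one projection this subspace is complemented in $X$ — contradiction. So $P_\nu|_Y$ is strictly singular for every $\nu < \mu$.

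Now I would build the gliding-hump sequence exactly as in Theorem \ref{t:C(K,X)}: choose norm-one $y_k \in Y$ and ordinals $0 = \nu_0 < \nu_1 < \nu_2 < \dots$ with $\|y_k - z_k\| < 10^{-k}$ where $z_k = (P_{\nu_k} - P_{\nu_{k-1}}) y_k$; since a section of a continuous field has norm $\max_t\|x(t)\|$ and the $z_k$ have pairwise disjoint (clopen) supports, $(z_k)$ — and hence $(y_k)$ — is isometrically equivalent to the $c_0$-basis, and $\sup_k \nu_k = \mu$ (else $P_{\sup \nu_k}$ would be an isomorphism on $\span[y_k]$). Then $\span[z_k]$ is contractively complemented in $X$ via $Q x = \sum_k w_k\big((P_{\nu_k}-P_{\nu_{k-1}})x\big) z_k$ for norm-one functionals $w_k$ with $w_k(z_k) = \|z_k\|$; the sum converges because $\lim_k \|(P_{\nu_k}-P_{\nu_{k-1}})x\| = 0$ (continuity and vanishing at $\mu$). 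Finally, $\span[z_k] \cong c_0$ is subprojective, so it contains a subspace complemented in itself, hence in $X$, and by Proposition \ref{p:seq} the same holds with $y_k$ in place of $z_k$, so $Y$ contains a subspace complemented in $X$ — the desired contradiction. The main obstacle, and the only place requiring genuine care beyond transcribing the $C(K,X)$ argument, is checking that the cut-off operators $P_\nu$ genuinely map the continuous field into itself and are contractive projections: this uses condition (3) (local uniform approximability) together with the fact that $\{t : t \leq \nu\}$ is clopen in $[0,\lambda]$, so the ``glued'' vector field $P_\nu x$ meets the local criterion at every point, including at $\nu$ itself and at limit ordinals.
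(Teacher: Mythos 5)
Your proposal is correct and follows essentially the same route as the paper: a transfinite induction on $\lambda$ with cut-off operators $P_\nu$, a gliding hump producing disjointly supported almost-images $z_k=(P_{\nu_k}-P_{\nu_{k-1}})y_k$ that are isometrically equivalent to the $c_0$-basis, and a small-perturbation finish (the paper stops at exhibiting the complemented $c_0$-copy rather than writing out your projection $Q$, but that is cosmetic). The one point to tighten is your phrase ``we may pass to $C_0$'': unlike $C(\mu,X)\cong C_0(\mu,X)\oplus X$, a continuous field over $[0,\mu]$ with $\mu$ a limit ordinal need not split off the fiber $X_\mu$ as a direct summand, so instead one should do what the paper does, namely apply Corollary \ref{c:complem} to the evaluation map $Q_\mu:X\to X_\mu$ (whose target is subprojective) to conclude that $Q_\mu|_Y$ is strictly singular, and then build the condition $\|Q_\mu y_k\|<10^{-4k}$ into your gliding hump so that $\|(I-P_{\nu_k})y_k\|$ can indeed be made small.
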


\begin{proof}
% Throughout the proof, we equip all vector fields with the topology of their $\sup$ norm.
Using one-point compactification if necessary (as in \cite[10.2.6]{Dix}),
we can assume that $K$ is compact.
As before, we assume that $K = [0,\lambda]$ ($\lambda$ is a countable ordinal).
We denote by $X_{(0)}$ the set of all $x \in X$ which vanish at $\lambda$.
If $\nu \leq \lambda$, we denote by $X_{[\nu]}$ the set of all $x \in X_\lambda$
which vanish outside of $[0,\nu]$. By \cite[Proposition 10.1.9]{Dix},
$x \chi_{[0,\nu]} \in X$ for any $x \in X$, hence $X_{[\nu]}$ is a Banach space.
We then define the restriction operator $P_\nu : X \to X_{[\nu]}$.
We denote by $Q_\nu : X \to X_\nu$ the operator of evaluation at $\nu$.

We say that a countable ordinal $\lambda$ has Property ${\mathcal{P}}$ if,
whenever $X$ is a continuous separable vector field whose fibers are
subprojective, then $X$ is subprojective. Using transfinite induction,
we prove that any countable ordinal has this property.

The base of induction is easy to handle. Indeed, when $\lambda$ is finite, then
$X$ embeds into a direct sum of (finitely many) subprojective spaces $X_\nu$.
Now suppose, for the sake of contradiction, that $\lambda$ is the smallest
ideal failing Property ${\mathcal{P}}$. Note that $\lambda$ is a limit ordinal.
Indeed, otherwise it has an immediate predecessor $\lambda_-$, and $X$ embeds
into a direct sum of two subprojective spaces -- namely, $X_{[\lambda_-]}$ and $X_\lambda$.

Suppose $Y$ is a subspace of $X$, so that no subspace of $Y$ is complemented in $X$.
We shall achieve a contradiction once we show that $Y$ contains a copy of $c_0$.

By Proposition \ref{p:compare}, $Q_\lambda$ is strictly singular on $Y$.
Passing to a smaller subsequence if necessary, we can assume that, $Y$ has a basis
$(y_i)_{i \in \N}$, so that (i) for any finite sequence
$(\alpha_i)$, $\|\sum_i \alpha_i y_i\| \geq \max_i |\alpha_i|/2$,
and (ii) for any $i$, $\|Q_\lambda y_i\| < 10^{-4i}$. 
Consequently, for any $y \in \span[y_j : j > i]$, $\|Q_\lambda y\| < 10^{-4i}$.
Indeed, we can assume that $y$ is a norm one vector with finite support, and write
$y$ as a finite sume $y = \sum_j \alpha_j y_j$. By the above, $|\alpha_i| \leq 2$
for every $i$. Consequently, $\|Q_\lambda y\| \leq \sum_j |\alpha_j| \|Q_\lambda y_j\|
\leq 2 \sum_{j > i} 10^{-4j} < 10^{-4i}$.

Now construct a sequence $\nu_1 < \nu_2 < \ldots < \lambda$ of ordinals,
a sequence $1 = n_1 < n_2 < \ldots$ or positive integers, and a sequence $x_1, x_2, \ldots$
of norm one vectors, so that (i) $x_j \in \span[y_i : n_j \leq i < n_{j+1}]$,
(ii) $\|P_{\nu_i} x_i\| < 10^{-4i}$, and (iii) $\|P_{\nu_{i+1}} x_i\| < 10^{-4i}$.
To this end, recall that, by Proposition \ref{p:compare} again, $P_\nu|_Y$ is strictly singular
for any $\nu < \lambda$. Pick an arbitrary $\nu_1 < \lambda$, and find a norm $1$ vector
$x_1 \in \span[y_1, \ldots, y_{n_2-1}]$ so that $\|P_{\nu_1} x_1\| < 10^{-4}$.
We have $\|Q_\lambda x_1\| < 10^{-4}$. By continuity, we can find $\nu_2 > \nu_1$ so that
$\|P_{\nu_2} x_1\| < 10^{-4}$. Next find a norm one $x_2 \in \span[y_{n_2}, \ldots, y_{n_3-1}]$
so that $\|P_{\nu_2} x_1\| < 10^{-8}$. Proceed further in the same manner.

We claim that the sequence $(x_i)$ is equivalent to the canonical basis in $c_0$.
Indeed, for each $i$ let $x_i^{\prime\prime} = P_{\nu_i} x_i + P_{\nu_{i+1}} x_i$,
and $x_i^\prime = x_i - x_i^{\prime\prime}$. Since we are working with the $\sup$ norm,
$\|x_i^\prime\| = \|x_i\| = 1$ for any $i$. Furthermore, the elements $x_i^\prime$ are
disjointly supported, hence, for any $(\alpha_i)$ finite sequence of scalars $(\alpha_i)$,
$\|\sum_i \alpha_i x_i^\prime\| = \max_i |\alpha_i|$. By the triangle inequality,
$$
\Big| \|\sum_i \alpha_i x_i\| - \|\sum_i \alpha_i x_i^\prime\| \Big| \leq
\sum_i |\alpha_i| \|x_i^{\prime\prime}\| < \max_i |\alpha_i| \sum_{i=1}^\infty 2 \cdot 20^{-4i} <
10^{-3} \max_i |\alpha_i| ,
$$
which yields the desired result.
\end{proof}

% If the fibers $X_t$ of a continuous field $X$ are $C^*$-algebras, then the same is true for $X$
% \cite[Sections 10.3-4]{Dix}.
% Conversely, a class of $C^*$-algebras can be conveniently represented
% as continuous vector fields.
To state a corollary of Proposition \ref{p:field},
recall that a $C^*$-algebra $\A$ is \emph{CCR} (or \emph{liminal}) if,
for any irreducible representation $\pi$ of $\A$ on a Hilbert space $H$,
$\pi(\A) = \iK(H)$. A $C^*$-algebra $\A$ is \emph{scattered} if every
positive linear functional on $\A$ is a sum of pure linear functionals
($f \in \A^*$ is called \emph{pure} if it belongs to an extreme ray of
the positive cone of $\A^*$). For equivalent descriptions of scattered
$C^*$-algebras, see e.g. \cite{Je, Je78, Ku}.

\begin{corollary}\label{c:field}
Any separable scattered CCR $C^*$-algebra is subprojective.
% In particular, any compact $C^*$-algebra is subprojective.
\end{corollary}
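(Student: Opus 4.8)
The plan is to realize $\A$ as a continuous field of subprojective Banach spaces over a scattered metrizable base space and then invoke Proposition \ref{p:field}. The first, routine, ingredient is that every fiber that can occur is subprojective: the fibers will be the algebras $\iK(H)$ of compact operators on the (separable, since $\A$ is separable) Hilbert spaces $H$ on which the irreducible representations of $\A$ act, and for $\dim H<\infty$ this is vacuous, while for $H$ infinite dimensional $\iK(H)=\iK(\ell_2,\ell_2)$ is subprojective by Corollary \ref{c:p est}: indeed $\ell_2$ carries an unconditional FDD (its standard basis), satisfies the lower and upper $2$-estimates, and $\ell_2$ together with $\ell_2^*=\ell_2$ is subprojective.

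The main work is the structural reduction. Since $\A$ is CCR, each irreducible representation has image $\iK(H)$ and $\mathrm{Prim}(\A)=\widehat{\A}$ is $T_1$; separability of $\A$ makes $\widehat{\A}$ second countable; and since $\A$ is scattered, $\widehat{\A}$ is a scattered space (see \cite{Je, Je78, Ku}). One then shows that $\widehat{\A}$ is in fact a locally compact, metrizable, scattered space — equivalently a countable, locally compact metric space — and that, as a CCR algebra with (Hausdorff) spectrum $\widehat{\A}$, $\A$ is isomorphic to a separable continuous vector field over $\widehat{\A}$ with fibers $X_t=\iK(H_t)$, i.e. to the algebra of continuous sections vanishing at infinity of a continuous field of elementary $C^*$-algebras over $\widehat{\A}$ (see \cite[Chapter 10]{Dix}). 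This presents $\A$ as a separable continuous vector field over a scattered metrizable base all of whose fibers are subprojective, so Proposition \ref{p:field} applies and gives that $\A$ is subprojective.

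The hard part is this operator-algebraic input, and in particular checking that the spectrum of a separable scattered CCR algebra is Hausdorff (hence metrizable): in general a scattered $T_1$ space need not be Hausdorff, so one must use the almost-Hausdorff structure of type I spectra, and it is this that makes the continuous-field picture over $\widehat{\A}$ available and Proposition \ref{p:field} applicable. If one prefers to bypass this point, an alternative is to run the argument of Proposition \ref{p:field} directly along a composition series $0=I_0\subseteq I_1\subseteq\cdots\subseteq I_\gamma=\A$ whose existence is guaranteed by scatteredness: each successor quotient $I_{\alpha+1}/I_\alpha$ is a $c_0$-sum of elementary $C^*$-algebras, hence subprojective by Proposition \ref{p:dir_sum}(b), and one repeats the transfinite induction of Proposition \ref{p:field} with the quotient maps $\A\to\A/I_\alpha$ in the role of the restriction maps $P_\nu$, using that $\|a\|=\sup_{P\in\mathrm{Prim}(\A)}\|a+P\|$ plays the role of the supremum norm along the scattered base.
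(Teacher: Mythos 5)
Your proposal follows essentially the same route as the paper: represent $\A$ as a separable continuous field over its spectrum $\hat{\A}$ (locally compact, metrizable, scattered) with fibers $\pi(\A)=\iK(H_\pi)$, each subprojective, and apply Proposition \ref{p:field}. You are in fact more explicit than the paper about why $\hat{\A}$ is Hausdorff and metrizable (the paper simply cites Pedersen and Jensen for this), and your alternative composition-series argument is a reasonable fallback, but the main line of reasoning coincides with the paper's proof.
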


\begin{proof}
Suppose $\A$ is a separable scattered CCR $C^*$-algebra.
As shown in \cite[Sections 6.1-3]{Ped}, the spectrum of a separable CCR algebra
is a locally compact Hausdorff space. If, in addition, the algebra is scattered,
then its spectrum $\hat{\A}$ is scattered as well \cite{Je, Je78}.
In fact, by the proof of \cite[Theorem 3.1]{Je}, $\hat{\A}$ is separable.
It is easy to see that any separable locally compact Hausdorff space
is metrizable.

By \cite[Section 10.5]{Dix}, $\A$ can be represented as a vector field over
$\hat{\A}$, with fibers of the form $\pi(\A)$, for irreducible representations $\pi$.
As $\A$ is CCR, the spaces $\pi(\A) = \iK(H_\pi)$ ($H_\pi$ being a separable Hilbert space)
are subprojective. To finish the proof, apply Proposition \ref{p:field}.
\end{proof}

The last corollary leads us to

\begin{conjecture}\label{con:C*}
A separable $C^*$-algebra is scattered if and only if it is subprojective.
\end{conjecture}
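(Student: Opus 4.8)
\textbf{Plan of proof for Conjecture \ref{con:C*}.}
The forward implication is exactly Corollary \ref{c:field} once we dispose of the CCR hypothesis, so the first task is to argue that every \emph{scattered} separable $C^*$-algebra is automatically CCR (equivalently, of type I); this is classical --- scattered $C^*$-algebras are known to be type I, indeed to be precisely the $C^*$-algebras all of whose quotients have atomic enveloping von Neumann algebra --- so the forward direction reduces to Corollary \ref{c:field}. For the converse I would argue by contraposition: assume $\A$ is a separable $C^*$-algebra that is \emph{not} scattered and produce an infinite dimensional subspace $Y \subset \A$ none of whose infinite dimensional subspaces is complemented in $\A$. The structural input is that a non-scattered $C^*$-algebra contains a commutative $C^*$-subalgebra isometric to $C(K)$ with $K$ perfect (e.g. $C(\Delta)$, $\Delta$ the Cantor set); better yet, it is known that a non-scattered $C^*$-algebra contains an isometric (even completely isometric) copy of $C(\Delta)$. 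Since $C(\Delta)$ contains an isometric copy of $C([0,1])$, and $C([0,1])$ contains isometric copies of every separable $C(K)$, in particular of spaces that are not subprojective, the non-subprojectivity of $\A$ would follow \emph{if} we knew that a subspace witnessing non-subprojectivity inside a subalgebra $B \subset \A$ also witnesses it inside $\A$. That last step is the crux.

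The obstacle, then, is that subprojectivity does \emph{not} pass to superspaces in general, so I cannot merely quote ``$\A \supset C(\Delta)$, $C(\Delta)$ not subprojective.'' What I would really want is a copy of $c_0$ or, more robustly, a copy of $C(\Delta)$ sitting inside $\A$ in such a way that every infinite dimensional subspace of it that happens to be complemented in $\A$ would force $\A$ to contain something it cannot. Here I would exploit the same mechanism used in Proposition \ref{p:field} and Theorem \ref{t:C(K,X)}: inside $C(\Delta) \subset \A$ one finds a Haar-type system $(h_n)$ spanning a copy of $c_0$ on which all ``block'' diagonal structure is available, and one shows that whenever $P$ is an infinite rank bounded projection on $\A$ whose range lies in $\overline{\span}[h_n]$, a gliding-hump / small perturbation argument (Proposition \ref{p:seq}, Remark \ref{r:kernels}) extracts from $P$ a copy of $C(\Delta)$ --- or of $L_1$, or $\ell_\infty$ --- sitting \emph{complemented} in $\A$, contradicting the type of $\A$. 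Concretely: a non-scattered $\A$ has a quotient with non-atomic center, and a complemented copy of $C(\Delta)$ in $\A$ would, by a standard averaging/conditional-expectation argument on the commutative subalgebra, yield a complemented copy of $L_1(0,1)$ in $\A^*$, i.e. a copy of $L_1$ complemented in a dual of a $C^*$-algebra; but dual spaces of $C^*$-algebras have the Radon--Nikod\'ym property relevant constraints --- more simply, $\A$ being non-scattered is equivalent to $\A^{**}$ (the enveloping von Neumann algebra) having a type ${\rm II}$ or non-atomic type ${\rm I}_\infty$ summand, and a complemented $C(\Delta)$ forces a complemented copy of $C(\Delta)$ in some von Neumann algebra $M$, hence (its bidual being $C(\Delta)^{**}$, an $L_\infty$ space) forces $M$ to contain $L_\infty(0,1)$ complementably, which is fine --- so the cleaner route is to produce a copy of $C(\Delta)$ in $\A$ for which no infinite dimensional subspace is complemented, arguing directly as in Proposition \ref{p:3 space}: one wants the injection $C(\Delta) \hookrightarrow \A$ to be ``strictly cosingular-like'' on the nose.

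In summary, the architecture is: (1) scattered $\Rightarrow$ type I $\Rightarrow$ CCR, hence scattered $\Rightarrow$ subprojective by Corollary \ref{c:field}; (2) not scattered $\Rightarrow$ $\A \supseteq C(\Delta)$ isometrically; (3) upgrade this to: not scattered $\Rightarrow$ $\A$ contains a copy $Z$ of $C(\Delta)$ no infinite dimensional subspace of which is complemented in $\A$ --- done by showing that any infinite rank projection of $\A$ onto a subspace of $Z$, after a small perturbation that replaces the basis of $Z$ by a disjointly supported Haar-like system (Proposition \ref{p:seq}), would have to fix a copy of the whole non-subprojective structure, and that this is incompatible with $\A$ being non-scattered because it would embed $L_1(0,1)$ complementably into the predual of the atomic part of $\A^{**}$, which is an $\ell_1$-sum and contains no complemented $L_1$. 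The main obstacle --- and the place where I expect to spend the most effort --- is precisely step (3): controlling an \emph{arbitrary} bounded projection of the large algebra $\A$ (not of the subalgebra $C(\Delta)$) onto a subspace of the embedded $C(\Delta)$, and I would attack it by combining the gliding-hump normalization of Proposition \ref{p:seq}--Remark \ref{r:kernels} with the order-theoretic structure of a maximal abelian subalgebra of $\A^{**}$ to run the $L_1$-complementation argument.
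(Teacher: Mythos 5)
The paper does not prove this statement: it is stated as a \emph{conjecture}, supported only by the partial result of Corollary \ref{c:field} (separable $+$ scattered $+$ CCR $\Rightarrow$ subprojective). So there is no proof in the paper to compare yours against; what can be assessed is whether your plan would close the gap, and it would not.

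The fatal problem is your step (1). You claim that every scattered separable $C^*$-algebra is automatically CCR, so that the forward implication reduces to Corollary \ref{c:field}. This is false, and the paper says so explicitly in the sentence immediately following the conjecture: a scattered $C^*$-algebra is GCR (type I), but need not be CCR --- the unitization of $\iK(\ell_2)$ is scattered and not CCR. Type I does not imply CCR. This is precisely why the forward direction is open: a general scattered separable $C^*$-algebra only admits a (possibly transfinite) composition series with elementary quotients, so one is forced to propagate subprojectivity through extensions, and Proposition \ref{p:3 space} shows that subprojectivity is \emph{not} a three-space property, so no naive induction along the composition series is available. Your plan contains no idea for this, which is the actual content of the conjecture.

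Your steps (2)--(3) for the converse are built on a misreading of which direction of heredity is needed. You worry that ``subprojectivity does not pass to superspaces,'' but that is irrelevant: to show a non-scattered $\A$ is not subprojective you only need that subprojectivity passes \emph{down} to subspaces, which is trivial (restrict the projection) and is noted at the start of the proof of Proposition \ref{p:dir_sum}. Hence, once one knows that a non-scattered separable $C^*$-algebra contains a self-adjoint element with uncountable spectrum $S$ (one of the standard characterizations of scatteredness in \cite{Je, Je78, Ku}), the subalgebra $C(S)$ is, by Milutin's theorem, isomorphic to $C([0,1])$, which contains $C(\Delta)$ and is not subprojective; therefore $\A$ is not subprojective, and the entire apparatus of your step (3) --- controlling arbitrary infinite-rank projections of $\A$ onto subspaces of the embedded $C(\Delta)$, the claimed Radon--Nikod\'ym constraints on duals of $C^*$-algebras (false: $C(\Delta)^*$ already contains $L_1$ and fails RNP), and the $L_1$-complementation argument in $(\A^{**})_*$ --- is both unnecessary and, as written, incorrect. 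In short: the direction you spend most of your effort on is essentially immediate, and the direction you dismiss in one line is the one that is genuinely open.
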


It is known (\cite{Je78}, see also \cite{Ku}) that a scattered $C^*$-algebra is GCR.
However, it need not be CCR (consider the unitization of $\iK(\ell_2)$).

\section{Subprojectivity of Schatten spaces}\label{s:schatten}

In this section, we establish:

\begin{proposition}\label{p:sch_subpr}
Suppose $\ce$ is a symmetric sequence space, not containing $c_0$.
Then $\se$ is subprojective if and only if $\ce$ is subprojective.
\end{proposition}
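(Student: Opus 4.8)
The plan is to prove the two implications separately; the first is immediate and the second is where the work lies.

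\smallskip

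\emph{From $\se$ to $\ce$.} I would note that $\beta\mapsto\diag(\beta)$ identifies $\ce$ isometrically with the subspace $\ce_{\mathrm d}$ of diagonal operators in $\se$ (indeed $\|\diag(\beta)\|_{\se}=\|\beta\|_\ce$ by symmetry of $\ce$), and that subprojectivity passes to subspaces, as already recorded in the proof of Proposition \ref{p:dir_sum}. Hence $\se$ subprojective forces $\ce$ subprojective, using neither hypothesis on $\ce$.

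\smallskip

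\emph{From $\ce$ to $\se$.} Assume $\ce$ subprojective. First I would spell out what ``symmetric sequence space not containing $c_0$'' buys: the norm of $\ce$ is order continuous \cite{LT2}, hence $(e_n)$ is a symmetric basis, the finite rank operators are dense in $\se$, and $\|T-q_NTq_N\|_{\se}\to0$ for every $T\in\se$, where $q_N$ is the orthogonal projection of $\ell_2$ onto $\span[e_1,\dots,e_N]$ and $q_N^\perp=I-q_N$. I would also use that $\se$ is unitarily invariant ($T\mapsto UTV$ is a surjective isometry of $\se$) and that the diagonal projection $\Delta\colon T\mapsto\diag\big((\langle Te_k,e_k\rangle)_k\big)$ is a contractive projection of $\se$ onto $\ce_{\mathrm d}$ (obtained by averaging over diagonal unitaries and using density of finite rank operators). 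The scheme then mirrors Theorem \ref{t:ce est ops} with $X_1=X_2=\ell_2$: set $R_n(T)=q_n^\perp Tq_n^\perp$, a contractive projection, and observe that $R_n^\perp(\se)=\{S\in\se:q_n^\perp Sq_n^\perp=0\}$ consists of operators of rank at most $3n$, on which $\|\cdot\|_{\se}$ is equivalent to the Hilbert--Schmidt norm; thus $R_n^\perp(\se)$ is isomorphic to a Hilbert space, hence subprojective. Given a subspace $Y\subseteq\se$: if $R_n^\perp$ is an isomorphism on some infinite-dimensional $Z\subseteq Y$ for some $n$, then Corollary \ref{c:complem} (with $X'=R_n^\perp(\se)$, $T=R_n^\perp$) produces a subspace of $Y$ complemented in $\se$; otherwise $R_n^\perp|_Y$ is strictly singular for every $n$, and a standard gliding-hump argument (using $\|T-q_NTq_N\|_{\se}\to0$) yields $0=n_0<n_1<n_2<\cdots$ and norm one $x_m\in Y$ with $\|x_m-u_m\|_{\se}$ as small as we wish, where $u_m:=(q_{n_m}-q_{n_{m-1}})\,x_m\,(q_{n_m}-q_{n_{m-1}})$ is supported on the block $H_m:=\span[e_k:n_{m-1}<k\le n_m]$.

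\smallskip

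The step I expect to be the crux — and the reason one cannot simply quote Theorem \ref{t:ce est ops} — is handling $\span[u_m]$: the space $\se$ generally satisfies \emph{no} $\mathcal F$-estimate, since a block-diagonal sum of $n$ rank-$N$ ``flat'' blocks has $\se$-norm $\|e_1+\dots+e_{nN}\|_\ce$, comparable to $\|e_1+\dots+e_n\|_\ce\cdot\|e_1+\dots+e_N\|_\ce$ only when the fundamental function of $\ce$ is essentially a power. My way around this is to \emph{diagonalize the blocks}. The sequence $(u_m)$ is $1$-suppression unconditional (passing to a sub-multiset of singular values only decreases the decreasing rearrangement, and $\|\cdot\|_\ce$ is monotone for that order), hence a monotone basic sequence; so $(x_m)$ is a basic sequence equivalent to it, and by Proposition \ref{p:seq} it suffices to show every subspace of $Z:=\span[u_m:m\in\N]$ contains a subspace complemented in $\se$. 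Applying the singular value decomposition inside each finite-dimensional block $H_m$, I obtain unitaries $U_m,V_m$ on $H_m$ with $U_mu_mV_m$ a diagonal operator; since the blocks $H_m$ exhaust $\N$, $U:=\bigoplus_mU_m$ and $V:=\bigoplus_mV_m$ are unitaries on $\ell_2$, and $\Phi\colon T\mapsto UTV$ is a surjective isometry of $\se$ with $\Phi(Z)\subseteq\ce_{\mathrm d}$. Now for any subspace $E\subseteq Z$, $\Phi(E)$ is a subspace of $\ce_{\mathrm d}\cong\ce$, which is subprojective, so $\Phi(E)$ contains a subspace $W$ complemented in $\ce_{\mathrm d}$; since $\ce_{\mathrm d}=\Delta(\se)$ is complemented in $\se$, $W$ is complemented in $\se$, and therefore so is $\Phi^{-1}(W)\subseteq E$. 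This yields the required statement about $Z$ and finishes the proof.

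\smallskip

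I expect the routine parts — the dichotomy, the gliding hump, the perturbation passage via Proposition \ref{p:seq}, and the identification of $R_n^\perp(\se)$ with a Hilbert space — to go through as in the tensor-product arguments of Section \ref{s:tens_prod}. The points needing care are: making the block unitaries $U_m,V_m$ assemble into honest unitaries of $\ell_2$ (which is why it matters that the $H_m$ tile $\N$), and checking that $\Delta$ is a \emph{contractive} projection of $\se$ onto $\ce_{\mathrm d}$ — this is where order continuity of $\ce$, i.e. the hypothesis $c_0\not\hookrightarrow\ce$, is genuinely used, through the density of finite rank operators in the averaging argument.
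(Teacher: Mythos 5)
Your proof is correct, but it takes a genuinely different route from the paper's. The paper reduces to a normalized sequence that asymptotically escapes every finite corner and then invokes Arazy's structural theorem \cite[Corollary 2.8]{Ar} through Proposition \ref{p:w null no c0}: after a small perturbation such a sequence lies in a complemented subspace of the form $Z_r+Z_c+Z_d$ (row, column, and diagonal components), isomorphic to $\ell_2$, $\ce$, or $\ell_2\oplus\ce$, and the main labour there is the construction of the three compatible contractive projections $P_r$, $P_c$, $P_d$. You instead run the corner dichotomy of Theorem \ref{t:ce est ops} with $R_n(T)=q_n^\perp Tq_n^\perp$, note that the range of $R_n^\perp$ consists of operators of uniformly bounded rank and is therefore isomorphic to a Hilbert space (so the row/column analysis disappears entirely), and in the remaining case produce genuinely two-sided block-diagonal vectors which you diagonalize by blockwise singular value decomposition; the block unitaries assemble into a surjective isometry of $\se$ carrying your span into the diagonal copy of $\ce$, which is complemented via the contractive diagonal conditional expectation $\Delta$. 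Both arguments are sound; all the steps you flag as needing care (the $H_m$ tiling $\N$ so that $U,V$ are honest unitaries, the contractivity of $\Delta$ via averaging over sign-diagonal unitaries plus density of finite rank operators) do go through, and you correctly identify why Theorem \ref{t:ce est ops} cannot be quoted verbatim (a general symmetric $\ce$ gives no $\ce$-estimate for block-diagonal sums in $\se$). Your version is self-contained modulo standard facts about $\se$ and makes explicit the case distinction that the paper's proof passes over silently when it asserts one can ``find a normalized sequence $(z_n)\subset Z_0$ with $\lim_n\|Q_kz_n\|=0$.'' What the paper's route buys in exchange is the stronger structural statement of Proposition \ref{p:w null no c0} --- an identification of the complemented superspace up to isomorphism, with control of the relevant constants --- which is what makes the sharper uniform and strong variants of the result accessible by the same argument.
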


The assumptions of this proposition are satisfied, for instance, if $\ce = \ell_p$
($1 \leq p < \infty$), or if $\ce$ is the Lorentz space $\lore(w,p)$
(see \cite[Proposition 4.e.3]{LT1}. However, not every symmetric sequence space
is subprojective. Indeed, suppose $\ce$ is Pelczynski's universal space: it
has an unconditional basis $(u_i)$ so that any other unconditional basis is equivalent
to its subsequence. As explained in \cite[Section 3.b]{LT1}, $\ce$ has a symmetric basis.
Fix $1 < p < q < 2$. Then the Haar basis in $L_p(0,1)$ is unconditional, hence
$L_p(0,1)$ is isomorphic to a complemented subspace $X$ of $\ce$. It is well known that
$\ell_q$ is contained in $L_p(0,1)$. Call the corresponding subspace of $\ce$ by
$X^\prime$. Then no subspace of $X^\prime$ is complemented in $\ce$: otherwise,
$L_p(0,1)$ would contain a complemented copy of $\ell_q$, which is impossible.

For the proof, we need a technical result.

\begin{proposition}\label{p:w null no c0}
Suppose $\se$ is a symmetric sequence space, not containing $c_0$.
Suppose, furthermore, that $(z_n) \subset \se$ is a normalized sequence,
so that, for every $k$, $\lim_n \|Q_k z_n\| = 0$. Then, for any $\vr > 0$,
$\se$ contains sequences $(\tilde{z}_n)$ and $(z_n^\prime)$, so that:
\begin{enumerate}
\item 
$(\tilde{z}_n)$ is a subsequence of $(z_n)$.
\item
$\sum_n \|\tilde{z}_n - z_n^\prime\| < \vr$.
\item
$(z_n^\prime)$ lies in the subspace $Z$ of $\se$, with the property that
{\rm{(i)}} $Z$ is $3$-isomorphic to either $\ell_2$, $\ce$, or $\ell_2 \oplus \ce$, and
{\rm{(ii)}} $Z$ is the range of a projection of norm not exceeding $3$.
\end{enumerate}
\end{proposition}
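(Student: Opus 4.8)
The plan is to build the subspace $Z$ inside a Schatten-type space $\se$ by exploiting the ``disjointification'' of the sequence $(z_n)$ together with the structure theory of operators on $\ell_2$. Recall that elements of $\se$ are compact operators on $\ell_2$ whose sequence of singular values lies in $\ce$; let $(Q_k)$ denote the projections appearing in the hypothesis (the natural ``truncation'' projections on $\se$ coming from truncating in $\ell_2$). First I would use a gliding-hump argument: since $(z_n)$ is normalized and $\lim_n \|Q_k z_n\| = 0$ for each $k$, I can pass to a subsequence $(\tilde z_n)$ and, by a small perturbation, replace it by a sequence $(z_n')$ whose terms have ``matrix support'' in disjoint corners of $\ell_2$, i.e. $z_n' = (E_{n} - E_{n-1}) z_n' (F_{n} - F_{n-1})$ for suitable increasing finite-rank projections $E_n, F_n$ on $\ell_2$; the perturbation can be made as small as desired in the $\| \cdot \|_{\se}$ norm so that (1) and (2) hold. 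The point of moving to block-diagonal (in the matricial sense) elements is that their span carries a computable norm: by the symmetric-basis estimates for Schatten spaces (of the kind already used in the proof of Corollary \ref{c:l_p l_q} and Lemma \ref{l:inj est}), $\|\sum_n \alpha_n z_n'\|_{\se}$ is comparable to the $\ce$-norm of a ``weighted'' arrangement of the singular values of the $z_n'$.

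Next I would analyze the individual blocks. Each $z_n'$ is a finite-rank operator between disjoint coordinate subspaces of $\ell_2$; write its singular value decomposition. There are two extreme behaviors, and the heart of the argument is to separate them. If the singular values of $z_n'$ are ``spread out'' — concretely, if $z_n'$ is (up to a bounded factor) a multiple of a partial isometry of rank $r_n \to \infty$ with roughly equal singular values — then the span of such blocks behaves like $\ell_2$, because a sum of such flat blocks has $\se$-norm comparable to the $\ell_2$-norm of the coefficients (this is where ``not containing $c_0$'' is used: it forces the symmetric space $\ce$ to satisfy a lower $\infty$-estimate failure, equivalently the fundamental function grows, which gives the needed two-sided $\ell_2$ comparison for flat blocks). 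If instead each $z_n'$ is essentially ``rank one concentrated'' — its singular value sequence is dominated by its first term — then $z_n'$ is close to $\|z_n'\| \, e \otimes f$ for unit vectors, and the span of these rank-one blocks is isometric (after the disjointification) to $\ce$ itself, since the $\se$-norm of a diagonal-type operator is exactly the $\ce$-norm of its diagonal. In general a block is a mixture, so by a further subsequence/splitting I would write $z_n' = a_n' + b_n'$ with $a_n'$ the ``flat part'' and $b_n'$ the ``rank-one-dominated part,'' arrange (passing to a subsequence) that one of the two parts is uniformly bounded below for all $n$, or that both contribute, leading to the three cases $\ell_2$, $\ce$, or $\ell_2 \oplus \ce$.

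For the complementation claim (3)(ii), I would produce an explicit projection of norm at most $3$. The disjointification means there are mutually orthogonal ``corners'' $\mathcal{B}(H_n, H_n')$ of $\se$ (with $H_n = \ran(E_n - E_{n-1})$, $H_n' = \ran(F_n - F_{n-1})$), and the map $P w = \sum_n (E_n - E_{n-1}) w (F_n - F_{n-1})$ is a contractive projection of $\se$ onto $\bigoplus_n \mathcal{B}(H_n, H_n')$ (this is the matricial ``Tong's trick,'' already invoked in the proof of Theorem \ref{t:ce est}). Within each corner, the span of $z_n'$ (or of $a_n'$, $b_n'$) is $1$-complemented by a rank-one-type projection onto a single singular direction, or by a projection onto the flat partial-isometry piece; combining these inside the $\bigoplus_n$ costs only the symmetric constant of $\ce$, which is $1$ after renorming. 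Assembling, the total projection has norm at most $3$, and its range is $3$-isomorphic to the appropriate model space. The main obstacle I anticipate is the case analysis: cleanly formalizing the dichotomy ``flat vs. rank-one-dominated'' for the blocks, extracting a subsequence on which the decomposition $z_n' = a_n' + b_n'$ is uniform, and verifying that the flat blocks really do span an $\ell_2$ with a two-sided estimate — this last point is exactly where the hypothesis that $\ce$ does not contain $c_0$ must be leveraged (via, e.g., the fact that then $\ce$ has nontrivial upper fundamental function behavior, forcing $\|\sum \alpha_n a_n'\|_{\se} \gtrsim (\sum |\alpha_n|^2)^{1/2}$ for rank-$\to\infty$ flat blocks), and getting the constants down to the stated $3$ will require a careful but routine optimization.
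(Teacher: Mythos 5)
Your first step already fails, and it is the crux of the matter. From the hypothesis $\lim_n\|Q_k z_n\|=0$ you cannot, even after passing to a subsequence and perturbing, place the $z_n'$ into pairwise disjoint matricial corners $z_n'=(E_n-E_{n-1})z_n'(F_n-F_{n-1})$. Take $\se=\cs_1$ and $z_n=x\otimes e_n$ for a fixed unit vector $x\in\ell_2$: this sequence is normalized, satisfies the hypothesis, but every $z_n$ has range contained in the one-dimensional span of $x$, so its span is isometrically $\ell_2$ (a ``row''), whereas any corner-disjoint normalized sequence in $\cs_1$ spans an isometric copy of $\ell_1=\ce$. No small perturbation can bridge that. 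The paper instead invokes Arazy's structure theorem \cite[Corollary 2.8]{Ar}, which is precisely the substitute for the (false) disjointification: after a subsequence and a small perturbation one gets $z_k'=a\otimes E_{1k}+b\otimes E_{k1}+c_k\otimes E_{kk}$, i.e.\ a fixed-row part, a fixed-column part, and a genuinely corner-disjoint diagonal part. The $\ell_2$ summands in the conclusion come from the row and column components, which are Hilbertian in \emph{every} unitary matrix ideal for the trivial reason that $\bigl\|\sum_n\alpha_n\, a\otimes E_{1n}\bigr\|=\|a\|\,\bigl(\sum_n|\alpha_n|^2\bigr)^{1/2}$ (the sum is essentially a rank-one operator); the $\ce$ summand comes from the diagonal corners. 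The remaining work in the paper is the construction of the three mutually annihilating projections $P_r,P_c,P_d$ of norm $\leq 1$ each, whence the constant $3$.

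Your second mechanism is also incorrect as stated: for corner-disjoint blocks the norm of $\sum_n\alpha_n z_n'$ is governed by the decreasing rearrangement of the union of the singular values, and ``flat'' high-rank blocks do not span $\ell_2$ in general (in $\cs_1$ they span $\ell_1$ isometrically, regardless of rank). So the dichotomy ``flat $\Rightarrow\ell_2$, rank-one-dominated $\Rightarrow\ce$'' does not hold, and the appeal to ``$\ce$ does not contain $c_0$'' to force a two-sided $\ell_2$ estimate for flat blocks has no basis. (In the actual argument that hypothesis is what makes Arazy's subsequence/perturbation result available.) Your projection $P_d$ onto the diagonal corners and the use of Tong's trick there are fine, but without the row/column components the statement you would prove is both different from and weaker than Proposition \ref{p:w null no c0}, and the case analysis you propose cannot be repaired within the corner-disjoint framework.
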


\begin{proof}%[Proof of Proposition \ref{p:sch_subpr}]
% Combining \cite[Corollary 2.8]{Ar} with
% the small perturbation principle, % and passing to a subsequence if necessary,
\cite[Corollary 2.8]{Ar} implies the existence of $(\tilde{z}_n)$ and $(z_n^\prime)$,
so that (1) and (2) are satisfied, and
% $Z$ is contained in a subspace spanned by the normalized basis vectors
$z_k^\prime = a \otimes E_{1k} + b \otimes E_{k1} + c_k \otimes E_{kk}$ ($k \geq 2$).
Thus, $z_n^\prime \subset Z = Z_r + Z_c + Z_d$, where
$Z_r = \span[a \otimes E_{1k} : k \geq 2]$ (the row component),
$Z_c = \span[b \otimes E_{k1} : k \geq 2]$ (the column component), and
$Z_d$ (the diagonal component) contains $c_k \otimes E_{kk}$, for any $k$.
More precisely, we can write $c_k = u_k d_k v_k$, where $u_k$ and $v_k$ are
unitaries, and $d_k$ is diagonal. Then we set
$Z_d = \span[u_k E_{ii} v_k \otimes E_{kk} : i \in \N, k \geq 2]$.

% To finish the proof, it suffices
It remains to build contractive projections $P_r$, $P_c$, and $P_d$
onto $Z_r$, $Z_c$, and $Z_d$, respectively, so that $Z_c \cup Z_d \subset \ker P_r$,
$Z_r \cup Z_d \subset \ker P_c$, and $Z_r \cup Z_c \subset \ker P_d$.
Indeed, then $P = P_r + P_c + P_d$ is a projection onto $Z_r + Z_c + Z_d$, and
the latter space is completely isomorphic to $Z_0 = Z_r \oplus Z_c \oplus Z_d$.
The spaces $Z_r$, $Z_c$, and $Z_d$ are either trivial (zero-dimensional), or
isomorphic to $\ell_2$, $\ell_2$, and $\ce$, respectively.
%  By assumption,
% $\ce$ is subprojective, and the subprojectivity of $\ell_2$ is immediate.
% By Proposition \ref{p:dir_sum}, $Z_0$ is subprojective as well.

$P_d$ is nothing but a coordinate projection, in the appropriate basis:
$$
P_d\big(u_k E_{ij} v_\ell \otimes E_{k\ell}\big) = \left\{ \begin{array}{cc}
   u_k E_{ii} v_k  \otimes E_{kk}  &  k = \ell \geq 2,  i = j   \\
   0  &   {\mathrm{otherwise}}
\end{array} \right.
$$
(for the sake of convenience, we set $u_1 = v_1 = I_{\ell_2}$).
Next construct $P_r$ ($P_c$ is dealt with similarly).
If $a=0$, just take $P_r = 0$. Otherwise, let $a^\prime = a/\|a\|$, and
find $f \in \se^*$ so that $\|f\| = 1 = \langle f, a^\prime \rangle$.
% Define $P_r$ via
% $$  P_r\big(z \otimes E_{k\ell}\big) =  \left\{ \begin{array}{cc}
%    \langle f, z \rangle a^\prime \otimes E_{1\ell}  &  k = 1 , \ell \geq 2   \\
%    0  &   {\mathrm{otherwise}}
% \end{array} \right. . $$
% It remains to show that $P_r$ is contractive. To this end, consider
For $x = \sum_{k,\ell} x_{k\ell} \otimes E_{k\ell}$, define
$$
P_r x = a^\prime \otimes \sum_{\ell \geq 2} \langle f, x_{1\ell} \rangle E_{1\ell} ,
$$
hence $\|P_r x\|_\ce^2 = \sum_{\ell \geq 2} |\langle f, x_{1\ell} \rangle|^2$.
It remains to show $\|P_r x\| \leq \|x\|$. This inequality is obvious
when $P_r x = 0$. Otherwise, set, for $\ell \geq 2$,
$$
\alpha_\ell = \frac{\overline{\langle f, x_{1\ell} \rangle}}{
  (\sum_{\ell \geq 2} |\langle f, x_{1\ell} \rangle|^2)^{1/2}} ,
$$
$y = I_{\ell_2} \otimes \sum_{\ell \geq 2} \alpha_\ell E_{\ell 1}$, and
$z = I_{\ell_2} \otimes E_{11}$. Then
$\|y\|_\infty = \big( \sum_{\ell \geq 2} |\alpha_\ell|^2 \big)^{1/2} = 1 = \|z\|_\infty$,
and $zxy = \sum_{\ell \geq 2} \alpha_\ell x_{1\ell} \otimes E_{11}$.
Therefore,
$$  \eqalign{
\|P_r x\|_\ce
&
=
\big\langle f, \sum_{\ell \geq 2} \alpha_\ell x_{1\ell} \big\rangle \leq
\big\|\sum_{\ell \geq 2} \alpha_\ell x_{1\ell}\big\|_\ce
\cr
&
=
\big\|\sum_{\ell \geq 2} \alpha_\ell x_{1\ell} \otimes E_{11}\big\|_\ce =
\|zxy\|_\ce \leq \|z\|_\infty \|x\|_\ce \|y\|_\infty = \|x\|_\ce ,
}  $$
which is what we need.
\end{proof}

\begin{proof}[Proof of Proposition \ref{p:sch_subpr}]
The space $\se$ contains an isometric copy of $\ce$, hence the subprojectivity
of $\se$ implies that of $\ce$. To prove the converse,
suppose $\ce$ is subprojective, and $Z_0$ is a subspace of $\se$, and show that
it contains a further subspace $Z$, complemented in $\se$.
To this end, find a normalized sequence $(z_n) \subset Z_0$, so that
$\lim_n \|Q_k z_n\| = 0$ for every $k$. By Proposition \ref{p:w null no c0},
$(z_n)$ has a subsequence $(z_n^\prime)$, contained in a subspace $Z_1$,
which is complemented in $\se$, and isomorphic either to $\ce$, $\ell_2$,
or $\ce \oplus \ell_2$. By Proposition \ref{p:dir_sum}, $Z_1$ is subprojective,
hence $\span[z_n^\prime : n \in \N]$ contains a subspace complemented in
$Z_1$, hence also in $\se$.
\end{proof}

As a consequence we obtain:

\begin{proposition}\label{p:predual}
The predual of a von Neumann algebra $\A$ is subprojective if and only if $\A$
is purely atomic.
\end{proposition}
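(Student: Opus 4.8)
The plan is to handle the two implications separately, deriving ``$\A$ purely atomic $\Rightarrow$ $\A_*$ subprojective'' directly from Propositions \ref{p:dir_sum} and \ref{p:sch_subpr}, and the converse by locating a complemented copy of $L_1(0,1)$ inside $\A_*$. For the forward direction, recall that a purely atomic von Neumann algebra is, up to isomorphism, $\bigoplus_i B(H_i)$, so that $\A_* \cong \big(\sum_i \cs_1(H_i)\big)_{\ell_1}$, an $\ell_1$-sum of trace classes. Each $\cs_1(H)$ is the Schatten space $\se$ attached to $\ce = \ell_1$, and $\ell_1$ is subprojective and contains no copy of $c_0$, so $\cs_1(H)$ is subprojective by Proposition \ref{p:sch_subpr} (the finite dimensional and mixed summands being absorbed by grouping them and a routine compression argument). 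Since $\ell_1$ has a $1$-unconditional basis and is itself subprojective, Proposition \ref{p:dir_sum}(b) then gives that $\A_*$ is subprojective.

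For the converse I argue the contrapositive: if $\A$ is not purely atomic, $\A_*$ is not subprojective. Since subprojectivity passes to subspaces and $L_1(0,1)$ is not subprojective \cite{Whi}, it suffices to exhibit in $\A_*$ a complemented subspace isomorphic to $L_1(0,1)$; the cleanest route is to find a von Neumann subalgebra $\calb \subseteq \A$ which is the range of a normal conditional expectation $E$ and whose predual contains a complemented copy of $L_1(0,1)$, for then $E_*$ embeds $\calb_*$ isometrically and complementably into $\A_*$. Two harmless reductions come first: replacing $\A$ by its central summand carrying no minimal projections (a complemented summand of $\A_*$) we may assume $\A$ has no minimal projections; and replacing $\A$ by the corner $p\A p$, with $p$ the support of a fixed normal state (the compression $x \mapsto pxp$ being a normal conditional expectation, so $(p\A p)_*$ is complemented in $\A_*$), we may assume $\A$ is $\sigma$-finite, carries a faithful normal state $\varphi$, and still has no minimal projections.

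It now suffices to produce a diffuse abelian von Neumann subalgebra $\mathcal D \subseteq \A$ that is the range of a normal conditional expectation, for then $\mathcal D_*$ --- the $L_1$-space of a non-atomic probability space --- contains a complemented copy of $L_1(0,1)$. I split along the type decomposition; as $\A$ is non-atomic it has a non-zero summand of one of the following types, and it is enough to treat that summand. On the type~I part (whose centre is diffuse, since there are no minimal projections), $\A \cong \bigoplus_n L_\infty(\mu_n)\,\bar\otimes\,B(H_n)$ with each $\mu_n$ non-atomic, and $\mathrm{id}\otimes\omega$ for a normal state $\omega$ on $B(H_n)$, composed with the coordinate projection, is a normal conditional expectation onto the diffuse abelian algebra $\mathcal D=L_\infty(\mu_n)$. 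A type~$\mathrm{II}_\infty$ part is $\mathcal N\,\bar\otimes\,B(\ell_2)$ with $\mathcal N$ of type~$\mathrm{II}_1$, which the same $\mathrm{id}\otimes\omega$ trick reduces to the type~$\mathrm{II}_1$ case; there a diffuse masa $\mathcal D$ works, the trace-preserving conditional expectation onto $\mathcal D$ existing automatically because the trace is tracial (Takesaki's criterion being vacuous). On a type~$\mathrm{III}$ part the centralizer $\A^\varphi$ is a finite von Neumann algebra and is the range of a normal conditional expectation $E^\varphi$ (Takesaki's theorem, $\A^\varphi$ being globally $\sigma^\varphi$-invariant); provided $\varphi$ is chosen so that $\A^\varphi$ is not purely atomic, we take a diffuse masa $\mathcal D\subseteq\A^\varphi$ and compose $E^\varphi$ with the trace-preserving conditional expectation $\A^\varphi\to\mathcal D$. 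For type~$\mathrm{III}_\lambda$ with $\lambda<1$ such a state $\varphi$ is furnished by the discrete (resp.\ continuous) decomposition; for type~$\mathrm{III}_1$ one invokes that every $\sigma$-finite type~$\mathrm{III}_1$ factor admits a faithful normal state whose centralizer is a type~$\mathrm{II}_1$ (hence non-atomic) factor.

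The one genuinely delicate point is this last, type~$\mathrm{III}_1$, case. A type~III algebra has no nonzero finite projections, and which of its subalgebras are ranges of normal conditional expectations is tightly constrained by modular theory, so the required diffuse abelian subalgebra with expectation cannot be extracted by an elementary compression and must instead be obtained from a judicious choice of state, using the structure theory of type~$\mathrm{III}_1$ factors. Everything else --- the forward direction, the two reductions, and the type~I and type~$\mathrm{II}$ analyses --- is routine once Propositions \ref{p:dir_sum} and \ref{p:sch_subpr} are in hand.
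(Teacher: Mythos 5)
Your ``if'' direction is correct and uses the same ingredients as the paper: for $\A$ purely atomic, $\A_* \cong \big(\sum_i \cs_1(H_i)\big)_{\ell_1}$, each summand is subprojective by Proposition \ref{p:sch_subpr} (with $\ce=\ell_1$), and Proposition \ref{p:dir_sum}(b) finishes; the paper instead embeds $\A_*$ as a contractively complemented subspace of a single $\cs_1(H)$ and applies Proposition \ref{p:sch_subpr} once, which is the same argument in a slightly different packaging. You are also right that for the converse an isomorphic (not necessarily complemented) copy of $L_1(0,1)$ in $\A_*$ already suffices, since subprojectivity passes to subspaces.

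The gap is in your converse, at exactly the point you flag as delicate: the type $\mathrm{III}_1$ case. Your reductions and the type $\mathrm{I}$ and type $\mathrm{II}$ analyses are fine, and your framework (a diffuse abelian subalgebra in the range of a normal conditional expectation, obtained from a state with diffuse centralizer via Takesaki's theorem) is the natural one. For type $\mathrm{III}_\lambda$ with $\lambda<1$ the discrete decomposition indeed supplies such a state. But the statement you ``invoke'' for $\mathrm{III}_1$ --- that every $\sigma$-finite type $\mathrm{III}_1$ factor admits a faithful normal state whose centralizer is a type $\mathrm{II}_1$ factor --- is not an available general theorem: it sits squarely in the territory of Connes' bicentralizer problem. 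Haagerup established the existence of states with large (indeed irreducible $\mathrm{II}_1$ factor) centralizers for \emph{injective} $\mathrm{III}_1$ factors precisely because this was the hard step in the uniqueness proof for the injective $\mathrm{III}_1$ factor, and the general case remains open. Note also that the obvious substitute fails: in the continuous decomposition $M = N \rtimes_\theta \R$ the canonical map onto the $\mathrm{II}_\infty$ core $N$ is only an operator-valued weight (the dual group $\R$ is not compact), the dual weight is infinite on every nonzero projection of $N$, so one cannot cut down to a state with $\mathrm{II}_1$ centralizer the way one does for $\lambda<1$. So your argument does not close for general $\mathrm{III}_1$ summands. The paper avoids all of this by citing \cite[Section 1]{OS} for the fact that the predual of a non--purely-atomic von Neumann algebra contains a (complemented) copy of $L_1(0,1)$; if you want a self-contained proof you need either that reference's route or some other device for type $\mathrm{III}_1$, not the centralizer statement as you have formulated it.
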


We say that $\A$ is \emph{purely atomic} if any projection in it has an atomic
subprojection. It is easy to see that this happens if and only if $\A = (\sum_i B(H_i))_\infty$.
% Indeed, $(\sum_i B(H_i))_\infty$ is clearly atomic.
The ``if'' direction is easy.
Conversely, if $\A$ is purely atomic,
denote by $(e_i)_{i \in I}$ a maximal collection of mutually non-equivalent atomic
projections in $\A$. Denote by $z(p)$ the central cover of $p$.
Then $z(e_i) z(e_j) = 0$ if $i \neq j$, and $\sum_i z(e_i) = 1$.
Consequently, $\A = \sum_i z(e_i) \A$. For a fixed $i$, let $(f_j)_{j \in J(i)}$
be a maximal family of mutually orthogonal atomic projections, so that $e_i$
is one of these projections. The $f_j$'s have the same central cover (namely, $z(e_i)$),
hence they are all equivalent to $e_i$. Furthermore, $z(e_i) = \sum_{j \in J(i)} f_j$,
hence $z(e_i) \A$ is isomorphic to $B(\ell_2(J(i)))$.

\begin{proof}
If a von Neumann algebra $\A$ is not purely atomic, then, as explained in
\cite[Section 1]{OS}, $\A_*$ contains a (complemented) copy of $L_1(0,1)$.
This establishes the ``only if'' implication of Proposition \ref{p:predual}.
Conversely, if $\A$ is purely atomic, then $\A_*$ is isometric to a (contractively
complemented) subspace of $\cs_1(H)$, and the latter is subprojective.
\end{proof}

\section{$p$-convex and $p$-disjointly homogeneous Banach lattices}\label{s:DH}

We say that $X$ is \emph{$p$-disjointly homogeneous} (\emph{$p$-DH} for short)
if every disjoint normalized sequence contains a subsequence equivalent
to the standard basis of $\ell_p$.

For the sake of completeness we present a proof of the following statement
(see \cite[4.11, 4.12]{FHSTT}).

\begin{proposition} \label{p-DC}
 Let $X$ be a $p$-convex. Then every subspace, spanned by a disjoint sequence equivalent
to the canonical basis of $\ell_p$, is complemented.
\end{proposition}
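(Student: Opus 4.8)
The plan is to exploit the $p$-convexity of $X$ together with a standard disjointification argument to build a bounded projection onto the span of a disjoint $\ell_p$-sequence. Let $(x_n)$ be a disjoint normalized sequence in $X$, equivalent (with constants, say $\lambda^{\pm 1}$) to the unit vector basis of $\ell_p$. Since $X$ is $p$-convex (and, renorming, we may assume its $p$-convexity constant is $1$), for each $n$ I would choose a norm-one functional $x_n^* \in X^*$ supported, in an appropriate sense, on the band generated by $x_n$, with $\langle x_n^*, x_n\rangle = 1$. The natural candidate comes from the $p$-convexity inequality: one wants functionals that are ``disjointly supported'' so that, for scalars $(a_n)$ of finite support, $\sum_n |a_n| |\langle x_n^*, x\rangle|$ can be controlled by a lattice expression in $x$.

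The key steps, in order, are: (1) construct the disjoint biorthogonal functionals $x_n^*$ — here one uses that in a Banach lattice every positive element generates a band with a band projection onto it, or more directly one takes $x_n^* = |x_n^*|$ to live on $\mathrm{supp}(x_n)$, which one obtains by a Hahn–Banach extension followed by restriction to the band; (2) define the candidate projection $P x = \sum_n \langle x_n^*, x\rangle x_n$ and show it converges and is bounded; (3) bound $\|P\|$: writing $y = \sum_n \langle x_n^*, x\rangle x_n$, use the $\ell_p$-equivalence to get $\|y\| \le \lambda \big(\sum_n |\langle x_n^*, x\rangle|^p\big)^{1/p}$, and then use $p$-convexity of $X$ in the dual form — or, more cleanly, the disjointness of the supports of the $x_n^*$ together with the fact that $p$-convexity of $X$ yields an upper $p$-estimate in $X^*$ (equivalently a lower $p'$-estimate), so that $\big(\sum_n |\langle x_n^*, x\rangle|^p\big)^{1/p} \le C \|x\|$ for a constant $C$ depending only on the $p$-convexity constant. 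Combining, $\|Px\| \le \lambda C \|x\|$, and $P$ is idempotent onto $\span[x_n]$ since $\langle x_n^*, x_m\rangle = \delta_{nm}$.

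The step I expect to be the main obstacle is step (1) together with the precise form of the estimate in step (3): getting genuinely \emph{disjointly supported} norming functionals $x_n^*$ and then extracting from $p$-convexity the inequality $\sum_n |\langle x_n^*, x\rangle|^p \le C^p \|x\|^p$. The cleanest route is: pick $x_n^* \ge 0$ on the band $B_n$ generated by $x_n$ (the bands $B_n$ are pairwise disjoint because the $x_n$ are disjoint) with $\|x_n^*\|=1$, $\langle x_n^*, x_n\rangle = 1$; for finitely many indices and signs $\varepsilon_n$, consider $\sum_n \varepsilon_n |\langle x_n^*, x\rangle|^{p-1} \langle x_n^*, \cdot\rangle$ paired with $x$, and use that $\big|\sum_n a_n x_n^*\big| \le \big(\sum_n |x_n^*|^{p'}\big)^{1/p'}$-type domination coming from the disjointness of supports, together with the $p$-convexity of $X$ (which is exactly the statement that $\|(\sum |u_n|^p)^{1/p}\| \le M^{(p)}(X)(\sum \|u_n\|^p)^{1/p}$) applied to a suitable sequence of disjoint pieces of $x$. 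Once the constant is pinned down the rest is routine, and the proposition follows; I would also remark that this is precisely the mechanism behind why $p$-DH $p$-convex lattices behave well, setting up Proposition \ref{p:p-DH l2 lp}.
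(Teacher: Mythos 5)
Your overall architecture (disjoint biorthogonal functionals $x_n^*$, then $Px=\sum_n\langle x_n^*,x\rangle x_n$) is reasonable, but the step you yourself flag as the main obstacle is where the argument genuinely breaks, and the duality you invoke there is backwards. What you need is that the map $x\mapsto(\langle x_n^*,x\rangle)_n$ is bounded into $\ell_p$, i.e.\ that the disjoint sequence $(x_n^*)$ satisfies an \emph{upper} $p'$-estimate in $X^*$. But $p$-convexity of $X$ gives an upper $p$-estimate for disjoint vectors in $X$, hence a \emph{lower} $p'$-estimate in $X^*$ --- the wrong inequality. (Your parenthetical ``an upper $p$-estimate in $X^*$, equivalently a lower $p'$-estimate'' conflates two non-equivalent conditions.) The failure is not merely formal: take $X=\ell_p\oplus_\infty c_0$, viewed as an atomic lattice over pairs of atoms, and $x_n=e_n+f_n$ where $(e_n)$, $(f_n)$ are the unit vectors of $\ell_p$ and $c_0$. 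This $X$ is $p$-convex with constant $1$, the $x_n$ are disjoint, normalized, and isometrically equivalent to the $\ell_p$-basis, and the band $B_n$ generated by $x_n$ is $\span[e_n,f_n]\cong\ell_\infty^2$. Your recipe permits the norm-one positive norming functional $x_n^*=f_n^*$ supported on $B_n$, and for $x=\sum_{n\le N}f_n$ one gets $\|x\|=1$ while $\bigl(\sum_n|\langle x_n^*,x\rangle|^p\bigr)^{1/p}=N^{1/p}$. So with an admissible choice of functionals your $P$ is unbounded; the conclusion is true here only because a \emph{different} choice ($x_n^*=e_n^*$) happens to work. Nothing in your argument selects the good functionals.

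The paper's proof is precisely a device for making that selection. It passes to the $p$-concavification $X_{(p)}$ (a Banach lattice because $X$ is $p$-convex), notes that $(|x_k|^p)$ is a disjoint $\ell_1$-sequence there, and takes a single positive Hahn--Banach extension $x^*$ of the summing functional. The seminorm $\|x\|_p=(x^*(|x|^p))^{1/p}$ is dominated by $\|\cdot\|_X$ (this is where $p$-convexity enters), the quotient by its kernel is an abstract $L_p$-space in which the images of the $x_k$ are disjoint and normalized, hence span a complemented subspace, and the projection is pulled back. The point you are missing is that the biorthogonal functionals must all be controlled by one positive functional on the concavification: it is the additivity of $x^*$ over the disjoint pieces $|P_{B_k}x|^p\le|x|^p$ that produces the $\ell_{p'}$-upper estimate, replacing the lower $p$-estimate for the original norm that $p$-convexity cannot supply. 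As written, your proof has a genuine gap at exactly this point.
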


\begin{proof}
% It is proved in my recent paper with Vladimir and  et al. Here is a sketch of the proof.
Let $(x_k) \subset X$ be a disjoint normalized sequence. Since $X$ is DH, by passing to a subsequence,
($x_k)$ is an  $\ell_p$ basic sequence. Then,  in the  p-concavification $X_{(p)}$ the disjoint
sequence $({x_k}^p)$ is an $\ell_1$ basic sequence. Therefore, there exists a functional
$x^* \in [({x_k}^p)]$ such that $x^*({x_k}^p)=1$ for all $ k$. By the Hahn-Banach Theorem $x^*$
can be extended to a positive functional in ${X_{(p)}}^*$. Define a seminorm
$\|x\|_p=(x^*(|x^p|))^{\frac{1}{p}}$ on $X$. Denote by $\mathcal{N}$ the subset of $X$
on which this seminorm is equal to zero. Clearly, $\mathcal{N}$ is an ideal, therefore,
the quotient space $\tilde{X}=X/\mathcal{N}$ is a Banach lattice, and the quotient map
$Q: X\to \tilde{X}$ is an orthomorphism. With the defined seminorm  $\tilde{X}$
is an abstract $L_p$-space, and  the disjoint sequence $Q(x_k)$ is normalized.
Therefore it is an $\ell_p$ basic sequence that spans a complemented subspace
(in particular, $Q$ is an isomorphism when restricted to $[x_k]$).
Let $\tilde{P}$ be a projection from $\tilde{X}$ onto $[Q(x_k)]$.
Then  $P=Q^{-1}\tilde{P}Q$ is a projection from $X$ onto $[x_k]$.
\end{proof}

%Note that $X$ is reflexive and  $p$-convex implies $X$ is order continuous

\begin{proposition}\label{p:p-DH l2 lp}
Let $X$ be a $p$-convex,  $p$-disjointly homogeneous Banach lattice $( p \ge 2)$.
Then any subspace of $X$ contains
a complemented copy of either $\ell_p$ or $\ell_2$. Consequently, $X$ is subprojective.
\end{proposition}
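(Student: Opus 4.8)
The plan is to reduce the subprojectivity of $X$ to the dichotomy ``every subspace contains a copy of $\ell_p$ or of $\ell_2$,'' and then to complement such copies using the $p$-convexity hypothesis. First I would recall the Kadec--Pe\l czy\'nski-type alternative for Banach lattices: given a subspace $Y\subset X$ and a normalized sequence $(y_n)$ in $Y$, either the norms $\|y_n\|$ are not ``uniformly lattice-concentrated'' (so that, after passing to a subsequence and a small perturbation, $(y_n)$ is equivalent to a disjoint sequence), or the sequence is dominated by a nicely behaved absolutely continuous structure, which forces an $\ell_2$-subspace. Concretely, I would use the standard fact that in a $p$-convex (hence order continuous, after passing to the relevant band) Banach lattice, any normalized weakly null sequence has a subsequence that is either equivalent to a disjoint sequence or, via the subsequence splitting lemma, essentially ``localized'' — and in the latter case $p$-convexity together with the lattice structure yields an $\ell_2$-spreading model, giving an $\ell_2$-subspace.

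Next I would split into the two cases produced by this alternative. In the first case, $Y$ contains a subspace $Z$ which is (up to small perturbation) spanned by a disjoint normalized sequence; since $X$ is $p$-disjointly homogeneous, a further subsequence is equivalent to the $\ell_p$-basis, and by Proposition~\ref{p-DC} the span of that disjoint sequence is complemented in $X$. Combining with Proposition~\ref{p:seq} (the small-perturbation principle), the perturbed copy of $\ell_p$ sitting inside $Y$ also contains a subspace complemented in $X$, so we are done. In the second case, $Y$ contains a copy $W$ of $\ell_2$; here I would invoke the known result that in a $p$-convex lattice with $p\ge 2$, a copy of $\ell_2$ spanned by a suitable sequence is complemented — this follows e.g. from the fact that such $W$ embeds as a complemented Hilbertian subspace via an averaging/Rademacher projection argument, or directly from \cite{FHSTT}. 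Either way, $Y$ then contains a complemented (in $X$) copy of $\ell_2$.

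Having shown that every subspace $Y$ of $X$ contains a complemented copy of $\ell_p$ or $\ell_2$, subprojectivity of $X$ is immediate from the definition, since $\ell_p$ and $\ell_2$ are themselves subprojective (indeed $\ell_r$ is uniformly subprojective for $1\le r<\infty$, as recorded after Proposition~\ref{p:GM}): any subspace of the complemented $\ell_p$ (or $\ell_2$) copy contains a further subspace complemented in that copy, hence complemented in $X$.

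The main obstacle I expect is the second case — extracting a \emph{complemented} copy of $\ell_2$. Finding \emph{some} $\ell_2$ inside $Y$ when the disjointification fails is routine from $p$-convexity and Krivine-type arguments, but arranging that this Hilbertian subspace (or a further subspace of it) be complemented in $X$ requires care: one typically needs the $\ell_2$-sequence to be built from blocks on which the lattice norm is comparable to an $L_p$-type norm, so that a conditional-expectation or Rademacher-averaging projection is bounded. I would handle this by being careful in the construction of the $\ell_2$-sequence — producing it as a sequence with good lattice-support properties, e.g. as normalized blocks with uniformly controlled ``tails,'' so that the projection built from the $p$-convexity constant and the lattice operations is bounded. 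The rest of the argument (the disjoint case, the final deduction of subprojectivity) is essentially bookkeeping on top of Propositions~\ref{p:seq}, \ref{c:complem}, and \ref{p-DC}.
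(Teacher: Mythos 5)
Your overall architecture (dichotomy into a disjoint case handled by $p$-disjoint homogeneity plus Proposition~\ref{p-DC}, and a non-disjoint case producing a complemented Hilbertian subspace) matches the paper, and your Case~1 is essentially identical to the paper's. But there is a genuine gap in Case~2, and you have correctly identified where it is without closing it. Saying that a copy of $\ell_2$ obtained from Krivine-type or spreading-model arguments ``is complemented via an averaging/Rademacher projection argument'' is not an argument in an abstract $p$-convex lattice: Rademacher averaging complements the span of the Rademachers in $L_p$, but in a general Banach lattice there is no ambient inner product or conditional expectation to average against, and ``good lattice-support properties'' of the sequence do not by themselves produce a bounded projection.

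The missing ingredient is a structural representation of $X$ that the paper sets up \emph{before} running the dichotomy. One first observes that $X$ is order continuous, reduces to a complemented order ideal with a weak unit containing the given separable subspace $M$, and then invokes the representation from \cite[p.~14]{JMST}: there is a probability measure $\mu$ with continuous inclusions $L_\infty(\mu)\subseteq X\subseteq L_p(\mu)\subseteq L_2(\mu)\subseteq L_1(\mu)$ (this is where $p$-convexity and $p\geq 2$ enter). The dichotomy is then the one from the proof of \cite[Proposition~1.c.8]{LT2}: either $M$ contains an almost disjoint bounded sequence (your Case~1), or the norms $\|\cdot\|_X$ and $\|\cdot\|_{L_1(\mu)}$ are \emph{equivalent on all of $M$}. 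In the latter case all four norms $\|\cdot\|_1,\|\cdot\|_2,\|\cdot\|_p,\|\cdot\|_X$ are equivalent on $M$, so $M$ sits as a closed subspace of $L_2(\mu)$, and the orthogonal projection of $L_2(\mu)$ onto $M$, restricted to $X\subseteq L_2(\mu)$, is the desired bounded projection from $X$ onto $M$. Note that this complements the whole subspace $M$ at once; no careful extraction of a special $\ell_2$-sequence is needed. Without the chain of inclusions through $L_2(\mu)$ your Case~2 does not go through, so you should replace the spreading-model formulation of the alternative by this one.
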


\begin{proof} First,  note that $X$ is order continuous.
 Let $M \subseteq X$ be an infinite dimensional separable subspace. Then there exists
a complemented order ideal  in $X$ with a weak unit that contains  $M$. Therefore,
without loss of generality, we may assume that $X$ has a weak unit. Then there exists
a probability measure $\mu$  \cite[p. 14]{JMST} such that we have continuous embeddings
$$L_{\infty}(\mu) \subseteq X \subseteq L_p(\mu) \subseteq L_2(\mu) \subseteq L_1(\mu).$$
Consequently, there exists a constant $c_1 > 0$ so that $c_1 \|x\|_p \leq \|x\|$
for any $x \in X$.

By the proof of \cite[Proposition 1.c.8]{LT2}, one of the following holds:

\begin{case}
 $M$ contains an almost disjoint bounded sequence.  By Proposition~\ref{p-DC} $M$
contains a copy of $\ell_p$ complemented in $X$.
\end{case}

\begin{case}
The norms $\| \cdot \|$ and $\| \cdot \|_1$ are equivalent on $M$.
Thus, there exists $c_2 > 0$ so that, for any $y \in M$,
$$
c_2 \|y\|_2 \geq c_2 \|y\|_1 \geq \|y\| \geq c_1 \|y\|_p \geq c_1 \|y\|_2 .
$$
In particular, $M$ is embedded into $L_2(\mu)$ as a closed subspace.
The orthogonal projection from $L_2(\mu)$ onto $M$ then defines a bounded
projection from $X$ onto $M$.
\qedhere
\end{case}
\end{proof}

The preceding result implies that Lorentz space $ \Lambda_{p,W}(0,1)$ is subprojective  since it is  $p$-DH and $p$-convex $(p \ge 1)$, see \cite[Theorem 3]{FJT} and \cite{KMP}.  Note that, originally,  the subprojectivity of $\Lambda(p,W)$ $(p \ge 2)$ was  observed in \cite[Remark~5.7]{FJT}. 

\section{Lattice-valued $\ell_p$ spaces}\label{s:lattice}

If $X$ is a Banach lattice, and $1 \leq p < \infty$, denote by $\widetilde{X(\ell_p)}$
the completion of the space of all finite sequences $(x_1, \ldots, x_n)$ (with $x_i \in X$),
equipped with the norm $\|(x_1, \ldots, x_n)\| = \|(\sum_i |x_i|^p)^{1/p}\|$, where
$$
(\sum_i |x_i|^p)^{1/p} =
\sup \Big\{ |\sum_i \alpha_i x_i| : \sum_i |\alpha_i|^{p^\prime} \leq 1 \Big\} ,
{\textrm{     with    }} \frac{1}{p} + \frac{1}{p^\prime} = 1 .
$$
See \cite[pp.~46-48]{LT2} for more information.
We have:

\begin{proposition}\label{p:X(l_p)}
Suppose $X$ is a subprojective separable space, with the lattice structure
given by an unconditional basis, and $1 \leq p < \infty$. Then
$\widetilde{X(\ell_p)}$ is subprojective.
\end{proposition}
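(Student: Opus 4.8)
The plan is to prove subprojectivity of $\widetilde{X(\ell_p)}$ by the same "either fix a finite block / or pass to a nicely embedded block basic sequence" strategy used throughout Section \ref{s:stability} and Section \ref{s:tens_prod}. Write $(e_n)$ for the $1$-unconditional basis of $X$ (after renorming), let $P_n^X$ be the associated coordinate projections, and let $Q_m$ be the natural projection of $\widetilde{X(\ell_p)}$ onto the first $m$ coordinates of the $\ell_p$-direction. Given a subspace $Y\subset \widetilde{X(\ell_p)}$, first consider the case in which $Q_m|_Z$ is an isomorphism for some $m$ and some $Z\subset Y$: then $Z$ embeds into $\widetilde{X(\ell_p^m)}$, a finite $\ell_p$-sum of copies of $X$, which is subprojective by Proposition \ref{p:dir_sum}, so Corollary \ref{c:complem} produces a subspace of $Y$ complemented in $\widetilde{X(\ell_p)}$. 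So we may assume $Q_m|_Y$ is strictly singular for every $m$. In parallel, use the unconditional structure of $X$: if for some finite set $F\subset\N$ the projection $(P^X_F)$ applied coordinatewise is non-strictly-singular on $Y$, the range is a (countable) $\ell_p$-sum of copies of the finite-dimensional $\span[e_i:i\in F]$, hence $c_0$- or $\ell_p$-like, certainly subprojective, and again we win. Thus we may assume both of these "truncation" operators are strictly singular on every subspace.

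Next I would run the standard gliding-hump construction: choosing $\vr_k\downarrow 0$ fast, build a normalized sequence $(y_k)\subset Y$ and integers $0=m_0<m_1<\dots$, $0=N_0<N_1<\dots$ so that each $y_k$ is, up to $\vr_k$, supported in the "rectangle" spanned by the $\ell_p$-coordinates in $(m_{k-1},m_k]$ and the $X$-basis vectors in $(N_{k-1},N_k]$ — i.e. $y_k$ is a disjointly supported block both in the $\ell_p$-direction and (using $1$-unconditionality of $X$) in the lattice direction of $X$. Apply the Small Perturbation Principle (Proposition \ref{p:seq}), so it suffices to handle the genuinely block-supported vectors $y_k^\prime$. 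The span $W$ of the full rectangles is the range of a norm-one (Tong-type) coordinate projection, exactly as in the proof of Theorem \ref{t:ce est}; and the crucial estimate is that these disjoint rectangles satisfy an $\ell_p$-estimate inside $\widetilde{X(\ell_p)}$: if $w_k$ is supported on the $k$-th rectangle then $\|\sum_k w_k\| = (\sum_k \|w_k\|^p)^{1/p}$. This is essentially the definition of the $X(\ell_p)$ norm combined with the disjointness of the underlying supports in $X$ (the lattice expression $(\sum |x_i|^p)^{1/p}$ collapses on disjoint blocks), so $W$ itself is isometric to an $\ell_p$-sum of the finite-dimensional rectangle spaces, hence subprojective by Proposition \ref{p:dir_sum}(b). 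Then $\span[y_k^\prime]$, being a subspace of the subprojective space $W$ which is contractively complemented in $\widetilde{X(\ell_p)}$, contains a subspace complemented in $W$, hence in $\widetilde{X(\ell_p)}$.

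Let me reorganize slightly for clarity of writing: the cleanest route is probably induction-free — just dichotomy on $Q_m$ plus the block argument — since the "finite-support-in-$X$" alternative is subsumed once the rectangles are seen to form an unconditional FDD with an $\ell_p$ (equivalently, $c_0$ when relevant) estimate; then Theorem \ref{t:ce est ops}-style reasoning (or a direct application of the block-complementation lemma in the proof of Theorem \ref{t:ce est}) finishes it. The main obstacle I anticipate is bookkeeping in the simultaneous gliding hump: one must glide in the $\ell_p$-direction \emph{and} in the $X$-basis direction at once while keeping the perturbations summable, and one must check that the two coordinate projections (the $\ell_p$-truncation $Q_m$ and the $X$-basis truncation) commute well enough that the "both strictly singular" reduction really forces the vectors into honest finite rectangles. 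Verifying the $\ell_p$-estimate on disjoint rectangles — unwinding the supremum defining $(\sum|x_i|^p)^{1/p}$ on vectors with pairwise disjoint $X$-supports — is routine but is the one computation I would actually write out carefully, since everything downstream (contractive complementation of the rectangle span, subprojectivity of the $\ell_p$-sum) rests on it.
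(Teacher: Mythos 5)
Your reduction steps are fine (the dichotomy on the truncations, the gliding hump, the Tong-type contractive projection onto the span $W$ of the rectangles), but the step you yourself flag as ``the one computation I would actually write out carefully'' is precisely where the argument breaks: disjoint rectangles in $\widetilde{X(\ell_p)}$ do \emph{not} satisfy an $\ell_p$-estimate. If $w_k$ is supported on rows $(N_{k-1},N_k]$ and columns $(m_{k-1},m_k]$, then by \eqref{eq:X(l_p)} one has $\|\sum_k w_k\|=\|\sum_k v_k\|_X$ where $v_k=\sum_{i\in(N_{k-1},N_k]}(\sum_j|a_{ij}|^p)^{1/p}x_i$ are disjointly supported in $X$ with $\|v_k\|_X=\|w_k\|$; the lattice expression collapses \emph{within} each row, but the rows are then recombined in the norm of $X$, not in $\ell_p$. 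Concretely, take $X=\ell_2$, $p=1$, and $w_k=x_k\otimes e_k$: then $\|\sum_{k=1}^n w_k\|=\|\sum_{k=1}^n x_k\|_{\ell_2}=\sqrt{n}$, whereas $(\sum_k\|w_k\|^p)^{1/p}=n$. So $W$ is not an $\ell_p$-sum of the rectangle spaces, and the appeal to Proposition \ref{p:dir_sum}(b) collapses. Worse, the estimate that $W$ actually satisfies is governed by the span of the particular disjoint vectors $v_k$ in $X$, which varies with the chosen blocks, so there is no fixed sequence space $\ce$ to feed into a Theorem \ref{t:ce est}--type block argument.

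The paper's proof sidesteps this entirely, and the device it uses is the idea missing from your writeup. It glides only in the $X$-direction (the dichotomy is on $P_n$, the projection onto the first $n$ rows, whose range is isomorphic to $\ell_p$), reducing to blocks $y_i$ supported on rows $(n_i,n_{i+1}]$ with finite column support. Then, for each row $k$, it picks a norming sequence $(\alpha_{kj})_j$ in the unit sphere of $\ell_{p'}$ with $|\sum_j\alpha_{kj}a_{kj}|=(\sum_j|a_{kj}|^p)^{1/p}$, and uses these to define a single linear map $U:\widetilde{X(\ell_p)}\to X$ sending $u_{kj}\mapsto\alpha_{kj}x_k$. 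By the supremum formula in \eqref{eq:X(l_p)} this $U$ is a contraction on all of $\widetilde{X(\ell_p)}$, and by the choice of the $\alpha_{kj}$ and $1$-unconditionality it is an isometry on $\span[y_i:i\in\N]$. Corollary \ref{c:complem} applied to $U$ and the subprojectivity of $X$ then finishes the proof. In other words, the block span is not shown to be a nice sum; it is transported isometrically into $X$ by collapsing the $\ell_p$-direction with row-wise norming functionals. If you want to salvage your outline, this collapse map is what you need to insert in place of the false $\ell_p$-estimate.
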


\begin{proof}
To show that any subspace $Y \subset \widetilde{X(\ell_p)}$ has a further subspace $Z$,
complemented in $\widetilde{X(\ell_p)}$,
let $x_1, x_2, \ldots$ and $e_1, e_2, \ldots$ be the canonical % unconditional
bases in $X$ and $\ell_p$,
respectively. Then the elements $u_{ij} = x_i \otimes e_j$ form an unconditional basis
in $\widetilde{X(\ell_p)}$, with
\begin{equation}
\|\sum a_{ij} u_{ij}\| = \|\sum_i (\sum_j |a_{ij}|^p)^{1/p} x_i\|_X =
\|\sum_i \big( \sup_{\sum_j |\alpha_j|^{p^\prime} \leq 1} |\sum_j \alpha_{ij} a_{ij}| \big) x_i\|_X .
\label{eq:X(l_p)}
\end{equation}
Let $P_n$ be the canonical projection onto $\span[u_{ij} : 0 \leq i \leq n, j \in \N]$,
and set $P_n^\perp = I - P_n$. The range of $P_n$ is isomorphic to $\ell_p$, hence,
if $P_n|_Y$ is not strictly singular for some $n$, we are done, by Corollary \ref{c:complem}.
% Proposition \ref{p:compare}.
If $P_n|_Y$ is strictly singular
for every $n$, find a normalized sequence $(y_i)$ in $Y$, and $1 = n_1 < n_2 < \ldots$, so that
$\|P_{n_i} y_i\|, \|P_{n_{i+1}}^\perp y_i\| < 100^{-i}/2$. By small perturbation, it remains
to prove the following: if $y_i = P_{n_i}^\perp P_{n_{i+1}} y_i$, then $\span[y_i : i \in \N]$
contains a subspace, complemented in $\widetilde{X(\ell_p)}$. Further, we may assume that
for each $i$ there exists $M_i$ so that
% $y_i \in \span[u_{kj} : n_i < k \leq n_{i+1}, j \leq M_i]$.
% Write $y_i = \sum a_{kj} u_{kj}$.
we can write
$$
y_i = \sum_{n_i < k \leq n_{i+1}, 1 \leq j \leq M_i} a_{kj} u_{kj} .
$$
For each $k \in [n_i+1, n_{i+1}]$ (and arbitrary $i \in \N$) find a finite sequence
$(\alpha_{kj})_{j=1}^{M_i}$ so that $\sum_j |\alpha_{kj}|^{p^\prime} = 1$, and
$|\sum_j \alpha_{kj} a_{kj}| = (\sum_j |a_{kj}|^p)^{1/p}$.
Define $U : \widetilde{X(\ell_p)} \to X : u_{kj} \mapsto \alpha_{kj} a_{kj} x_k$.
By \eqref{eq:X(l_p)}, $U$ is a contraction, and $U|_{\span[y_i : i \in \N]}$ is an isometry.
To finish the proof, recall that $X$ is subprojective, and apply Corollary \ref{c:complem}.
\end{proof}

\begin{remark}\label{r:C(K) l_p}
Using similar methods, one can prove: if $K$ is a compact metrizable space,
and $1 \leq p < \infty$, then $\widetilde{C(K)(\ell_p)}$ is subprojective.
\end{remark}

Recall that, for a Banach space $X$, we denote by $Rad(X)$ the completion of the finite sums
$\sum_n r_n x_n$ ($r_1, r_2, \ldots$ are Rademacher functions, and $x_1, x_2, \ldots \in X$)
in the norm of $L_1(X)$ (equivalently, by Khintchine-Kahane Inequality, in the norm of $L_p(X)$).
% with the norm $\|x\|= \int_0^1\|r_n(t)u_n\|_Xdt$.
If $X$ has a unconditional basis $(x_i)$ and finite cotype,
then $Rad(X)$ is isomorphic to $\widetilde{X(\ell_2)}$
(here we can view $X$ as a Banach lattice, with the order induced by the
basis $(x_i)$). Indeed, by \cite[Section 1.f]{LT2}, $X$ is $q$-concave, for some $q$.
% Indeed, we can identify the elements of
% Then
An array $(a_{mn})$ can be identified both with an element of $Rad(X)$
(with the norm $\int_0^1 \| \sum_m \sum_n a_{mn}r_nx_m\|$), and with an element of
$\widetilde{X(\ell_2)}$ (with the norm $\| \sum_m (\sum_n |a_{mn}|^2)^{1/2}x_m\|$).
Then
$$
\eqalign{
& D \| \sum_m (\sum_n |a_{mn}|^2)^{1/2}x_m\|
  \le 
 \|  \sum_m \int_0^1 |\sum_n a_{mn}r_n|x_m\| 
 =
 \| \int_0^1 | \sum_m \sum_n a_{mn}r_nx_m| \| \cr
&\le
 \int_0^1 \| \sum_m \sum_n a_{mn}r_nx_m\|  
 \le
 (\int_0^1 \|\sum_m \sum_n a_{mn}r_nx_m\|^q)^{1/q}\cr
  &\le  
M_q \|(\int_0^1 |\sum_m \sum_n a_{mn}r_nx_m|^q)^{1/q}\|
 \le M_q \|\sum_m (\int_0^1|\sum_n a_{mn}r_n|^q)^{1/q}x_m\| \cr
&\le
C M_q\| \sum_m (\sum_n |a_{mn}|^2)^{1/2}x_m\|,
}
$$
where $M_q$ is a $q$-concavity constant, while $D$ and $C$ come from Khintchine's inequality.
Thus, we have proved:

\begin{proposition}\label{p:Rad X}
If $X$ is a subprojective space with an unconditional basis and non-trivial cotype,
then $Rad(X)$ is subprojective.
\end{proposition}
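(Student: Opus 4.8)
The plan is to deduce this immediately from Proposition \ref{p:X(l_p)} once we recognize $Rad(X)$ as (isomorphic to) a space of the form $\widetilde{Y(\ell_2)}$. The first step is to record that non-trivial cotype is exactly what is needed to invoke the lattice machinery: a space with non-trivial cotype has finite cotype $q$ for some $q<\infty$, and hence, by \cite[Section 1.f]{LT2}, the lattice $X$ (equipped with the order induced by its unconditional basis) is $q$-concave for some $q<\infty$. This is the only place the cotype hypothesis, as opposed to the mere presence of an unconditional basis, is used.

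With $q$-concavity available, the second step is the identification $Rad(X)\cong\widetilde{X(\ell_2)}$. This is precisely the two-sided estimate carried out in the display preceding the statement: Khintchine--Kahane allows one to pass between the $L_1(X)$ and $L_q(X)$ norms on $Rad(X)$, $q$-concavity lets one pull the $L_q$ norm inside the lattice, and Khintchine's inequality applied coordinatewise converts $(\int_0^1 |\sum_n a_{mn}r_n|^q)^{1/q}$ into $(\sum_n |a_{mn}|^2)^{1/2}$ up to constants. The conclusion is that the identity map on arrays $(a_{mn})$ furnishes an isomorphism between $Rad(X)$ and $\widetilde{X(\ell_2)}$, where $X$ carries the lattice structure given by its unconditional basis.

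The final step is to apply the earlier results. Since $X$ has an unconditional basis it is separable, so Proposition \ref{p:X(l_p)} applies with $p=2$: because $X$ is subprojective, $\widetilde{X(\ell_2)}$ is subprojective. Subprojectivity is an isomorphic invariant, so $Rad(X)$ is subprojective as well.

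I do not expect a genuine obstacle here; the work is essentially bookkeeping. The one point requiring care is consistency of the lattice structure: the same order on $X$ coming from the unconditional basis must be the one used both to form $\widetilde{X(\ell_2)}$ and to obtain $q$-concavity, and one should check that the concavity estimate and the hypothesis of Proposition \ref{p:X(l_p)} refer to this common structure. Once that is verified, the proof is just the chain ``$q$-concavity $\Rightarrow$ $Rad(X)\cong\widetilde{X(\ell_2)}$ $\Rightarrow$ subprojective by Proposition \ref{p:X(l_p)} $\Rightarrow$ $Rad(X)$ subprojective.''
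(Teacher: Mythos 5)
Your proposal is correct and follows exactly the paper's own route: non-trivial cotype gives $q$-concavity via \cite[Section 1.f]{LT2}, the Khintchine/Khintchine--Kahane computation identifies $Rad(X)$ with $\widetilde{X(\ell_2)}$ up to isomorphism, and Proposition \ref{p:X(l_p)} with $p=2$ finishes the argument. Your remark about using one consistent lattice structure (the order induced by the unconditional basis) throughout is the right point of care, and it is satisfied here.
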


\begin{remark}
% We do not know whether the assumption of the order on $X$ being determined by an
% unconditional basis is essential in Proposition \ref{p:X(l_p)}. Perhaps some kind of
% measurable selection for $(\alpha_i)$'s will work if $X$ is a function space.
% 
By \cite[Theorem~2.3]{KW}, if $X$ is a non-atomic order continuous % $q$-convex Banach lattice,
Banach lattice with an unconditional basis,
then $\widetilde{X(\ell_2)}$ is isomorphic to $X$.
% Therefore, in this case, $Rad(X)$ is subprojective if and only if $X$ is.
Furthermore, if $X$ is a non-atomic Banach lattice with an unconditional basis and
non-trivial cotype, then $Rad(X)$ is isomorphic to $X$.
Indeed, non-trivial cotype implies non-trivial lower estimate \cite[p.~100]{LT2},
which, by \cite[Theorem 2.4.2]{M-N}, implies order continuity. Therefore,
$X$ is isomorphic to $\widetilde{X(\ell_2)}$, which, in turn, is isomorphic to $Rad(X)$.
\end{remark}

\end{document}